\newcommand{\Line}[2]{\underline{\hbox to #1 cm{#2}}}
\newcommand{\pa}{\partial} % partial derivative
\newcommand{\xwl}{\overline} 
\newcommand{\Rr}{\mathbb{R}} % real field
\newcommand{\Zz}{\mathbb{Z}} % integer domain 
\newcommand{\xth}{\theta}
\renewcommand{\Re}{\mathrm{Re}} % renew real part
\renewcommand{\Im}{\mathrm{Im}} % renew image part
\numberwithin{equation}{section} %保证每一节的公式
\renewcommand\d{\partial}
\renewcommand\a{\alpha}
\newcommand\R{\mathbb R}
\def\la{\lambda}
\def\epsilon{\varepsilon}
\def\e{\varepsilon}
\newcommand\br{\begin{rem}}
\newcommand\er{\end{rem}}
\newcommand\bp{\begin{pmatrix}}
\newcommand\ep{\end{pmatrix}}
\newcommand\be{\begin{equation}}
\newcommand\ee{\end{equation}}
\newcommand\ba{\begin{equation}\begin{aligned}}
\newcommand\ea{\end{aligned}\end{equation}}
\newcommand\bite{\begin{itemize}}
\newcommand\eite{\end{itemize}}
\newcommand{\D}[1]{\,\mathrm{d}#1}
\newcommand{\De}{{\Delta}}
\newtheorem{definition}{Definition}[section]
\newtheorem{theorem}[definition]{Theorem}
\newtheorem{proposition}[definition]{Proposition}
\newtheorem{corollary}[definition]{Corollary}
\newtheorem{remark}[definition]{Remark}
\numberwithin{equation}{section}
\begin{document}

\setlength{\parindent}{2em}

\title{Error estimates in the non-relativistic limit  for the two-dimensional cubic Klein-Gordon equation}
\author{Yong Lu \footnote{School of Mathematics, Nanjing University, 22 Hankou Road, Gulou District, 210093 Nanjing, China. Email: luyong@nju.edu.cn. The research of Y. Lu was supported by Natural Science Foundation of Jiangsu Province (Grant No. BK20240058)} \and  Fangzheng Huang \footnote{school of Mathmatics, Nanjing University, 22 Hankou Road, Gulou Sistrict, 210093 Nanjing, China. Email:211250057@smail.nju.edu.cn.}}

\date{}
\maketitle

\begin{abstract}
   In this paper, we study the non-relativistic limit of the two-dimensional cubic nonlinear Klein-Gordon equation with a small parameter $0<\e \ll 1$ which is inversely proportional to the speed of light. We show the cubic nonlinear Klein-Gordon equation converges to the cubic nonlinear Schr\"{o}dinger equation with a convergence rate of order $O(\e^2)$. In particular, for the defocusing case with high regularity initial data, we show error estimates of the form $C(1+t)^N \e^2$ at time $t$ up to a long time of order $\e^{-\frac{2}{N+1}}$, while for initial data with limited regularity, we also show error estimates of the form $C(1+t)^M\e$ at time $t$ up to a long time of order $\e^{-\frac{1}{M+1}}$. Here $N$ and $M$ are constants depending on initial data. The idea of proof is to reformulate nonrelativistic limit problems to stability problems in geometric optics, then employ the techniques in geometric optics to construct approximate solutions up to an arbitrary order, and finally, together with the decay estimates of the cubic Schr\"{o}dinger equation, derive the error estimates.

\end{abstract}

Keywords: cubic Klein-Gordon equation, non-relativistic limit, cubic Schr\"{o}dinger equation, convergence rates, geometric optics.

\tableofcontents

\section{Introduction}

\subsection{Setting}
The Klein-Gordon equation is a relativistic version of the Schr\"{o}dinger equation to describe the motion of spinless particles. We consider the two-dimensional nonlinear Klein-Gordon equation in the dimensionless form
\begin{align} \label{K_L} 
    \e^{2} \pa_{t t}u - \De_{x}u + \e^{-2}u + f(u) = 0, \quad  t \geq 0,  \ x \in \Rr^{2}.
\end{align}
Here $u := u(t , x)$ and the nonlinearity are  real-valued functions. In this paper, we consider the classical cubic nonlinearity $f(u)= \la u^3$ with $\la \neq 0$. The small parameter $\e$ is inversely proportional to the speed of light. Our goal is to study the convergence of the nonlinear Klein-Gordon equation in the non-relativistic limit $ \e \to 0$ with initial data
\begin{align} \label{inital_KL}
    u(0,x)=\phi(x),\quad 
    ( \pa_{t} u )(0, x)= \e^{-2} \psi(x), \quad x\in \Rr^{2},
\end{align}
where $\phi$ and $\psi$ are real-valued functions.

For some fixed $\e$, the well-posedness of the nonlinear Klein-Gordon equation has been thoroughly studied, as seen in works like \cite{GV85} and \cite{GV89}. The non-relativistic limit of the Klein-Gordon equation \eqref{K_L}-\eqref{inital_KL} has been extensively studied in both theoretical and numerical fields, with notable contributions from references such as \cite{BCZ14, BD12, BZ16, BZ17, BZ19, BZ19_2, LZ17, M01,MNO02,N90, SZ20, MS72,MN02, WL23}.

Numerical simulations play an important role in studying the Klein-Gordon equation in the non-relativistic limit. %Recently, based on extensive numerical results. In \cite{BZ17}, Bao and Zhao give similar results for the Klein-Gordon-Schr\"{o}dinger system, while   in \cite{BZ16}, they give similar results for the Klein-Gordon-Zakharov system. 
Notably, numerical simulations in \cite{SZ20} and \cite{BZ16} show that in the non-relativistic regime, the Klein-Gordon equation converges to the nonlinear Schr\"{o}dinger equation with an error estimate of the form $(1 + t)\e^2$ for smooth initial data, and of the form $(1 + t)\e$ for insufficiently smooth initial data.

For the case of quadratic nonlinearities, a uniform error estimate of order $O(\e)$ over an order $O(\frac{1}{\e})$ long time  is obtained in \cite{LZ16}. For cubic nonlinearity and three-dimensional case, it is shown in \cite{BLZ24} that the error estimates are of the form $(1+t)\e^2$ for data with high regularity and of the form $(1+t)\e$ for data with limited regularity. The result is actually consistent with numerical result in \cite{BZ17} and \cite{SZ20}. This paper is devoted to the study of two-dimensional case. We will show that in the two-dimensional case, the error estimate exhibits a higher growth rate of the form $(1 + t)^{N_{\phi,\psi}}$, where $N_{\phi,\psi}$ is a fixed constant greater than or equal to $1$, depending on the initial data. Actually, in \cite{WL23}, Wu and Lei obtained error estimates of the form $ (1+t) \e^2$ in two-dimensional case in $L^2$ norm, whereas we get error estimates in $H^s$ norm with $s$  large as long as the initial data are sufficiently regular.

Throughout this paper, $C$ denotes a generic constant independent of the initial datum $(\phi, \psi)$;  $C_{\phi,\psi}$ denotes a constant depending on the initial datum $(\phi, \psi)$; $C_{\phi,\psi}(t)$ denotes a positive, continuous, and increasing function of time $t$. Specifically, $C$, $C_{\phi,\psi}$ and $C_{\phi,\psi}(t)$ are $\e$-independent, and their values may vary from line to line. 

\subsection{Main Results}
We will show that in the non-relativistic limit, the cubic nonlinear Klein-Gordon equation can be approximated by the cubic nonlinear Schr\"{o}dinger equation
\begin{align} \label{ls_g_0}
    2i \pa_{t} g_{0} - \De_{x} g_{0}  +3 \la \left| g_{0} \right| ^ {2} g_{0} =0, \quad g_{0} (0, \cdot) = \frac{\phi - i \psi}{2}.
\end{align}
We assume that the regularity on the initial data is given as
\begin{align} \label{inital_datum}
     \phi, \, \psi \in H^s(\R^2), \quad s \geq 1.
\end{align}
To get decay estimates for the solutions of the cubic Schr\"{o}dinger equation, we assume the following weighted regularity on the initial data
\begin{align} \label{weighted}
      \phi, \psi \in W^{1, \frac{4}{3}}(\Rr^2), \quad |x|\phi,\, |x|\psi \in H^1(\mathbb{R}^2) \cap L^{\frac{4}{3}}(\mathbb{R}^2).
\end{align}
We recall some results concerning the well-posedness of \eqref{ls_g_0}. The local well-posedness is classical, see for example \cite{B99}.
\begin{proposition}
\label{local_wp} %%这里需要的是 [] 
    Suppose $(\phi, \psi) $ satisfies \eqref{inital_datum} with $s > 1$. Then there exists a unique local solution $g_0(t, x) \in C([0, T^*), H^s(\Rr^2))$ to \eqref{ls_g_0} with $T^*$ depending on the initial datum $(\phi,\psi)$.
\end{proposition}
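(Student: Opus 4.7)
The plan is to reformulate \eqref{ls_g_0} as an integral equation via Duhamel's principle and solve it by a contraction mapping argument in $C([0,T]; H^s(\Rr^2))$. Dividing \eqref{ls_g_0} by $2i$ yields $\pa_t g_0 = \tfrac{i}{2}\De_x g_0 - \tfrac{3i\la}{2}|g_0|^2 g_0$, which is equivalent to the mild formulation
\[
g_0(t) = U(t)\, g_0(0) - \tfrac{3i\la}{2}\int_0^t U(t-\tau)\bigl( |g_0(\tau)|^2 g_0(\tau)\bigr) \xd{\tau},
\]
where $U(t) := e^{\frac{it}{2}\De_x}$ is the free Schr\"{o}dinger propagator on $\Rr^2$. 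Since $U(t)$ is a unitary group on $H^s(\Rr^2)$ for every $s \ge 0$, and $g_0(0) = (\phi - i\psi)/2 \in H^s(\Rr^2)$ by \eqref{inital_datum}, the linear part of the Duhamel formula causes no difficulty.

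Next I would perform a Picard iteration in the closed ball of radius $R := 2\|g_0(0)\|_{H^s}$ inside $X_T := C([0,T]; H^s(\Rr^2))$, equipped with the supremum-in-time $H^s$-norm. The essential input is that for $s > 1 = d/2$ the Sobolev space $H^s(\Rr^2)$ is a Banach algebra; together with stability under complex conjugation, this yields the trilinear estimate
\[
\bigl\| |g|^2 g - |h|^2 h \bigr\|_{H^s} \leq C\bigl(\|g\|_{H^s}^2 + \|h\|_{H^s}^2\bigr)\|g-h\|_{H^s}.
\]
Combined with the isometry of $U(t)$ on $H^s$, the Duhamel map $\Phi$ defined by the right-hand side above satisfies $\|\Phi(g)\|_{X_T} \leq \tfrac12 R + C|\la| T R^3$ and has Lipschitz constant $\lesssim |\la| T R^2$ on the ball. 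Choosing $T > 0$ small enough in terms of $R$ and $|\la|$, Banach's fixed point theorem produces a unique mild solution on $[0,T]$.

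Finally, a standard continuation argument extends the solution to a maximal interval $[0,T^*)$: one iterates the above local construction starting from any later time at which the $H^s$-norm remains finite, and by uniqueness the pieces glue into the unique $g_0 \in C([0,T^*); H^s(\Rr^2))$. The only genuinely delicate point is the trilinear algebra estimate, which is exactly why the hypothesis $s > 1$ appears; below this threshold the cubic nonlinearity need not preserve $H^s$ pointwise in time, and one would have to invoke Strichartz-type mixed-norm techniques instead.
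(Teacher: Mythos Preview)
Your argument is correct and is precisely the standard route to local well-posedness of NLS in $H^s(\Rr^2)$ for $s>1$: Duhamel formulation, Banach algebra property of $H^s$ for $s>d/2$, unitarity of $e^{\frac{it}{2}\De_x}$, contraction on a ball in $C([0,T];H^s)$, and continuation to a maximal interval. The paper itself does not give a proof of this proposition; it simply records the result as classical and refers to \cite{B99}. Your write-up is exactly the kind of argument one finds in such references, so there is nothing to compare and nothing to fix.
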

%blow up就是爆破的意思
For the focusing case ($\lambda < 0$), the solution to the cubic Schrödinger equation \eqref{ls_g_0} may blow up in finite time. However, for the defocusing case ($\lambda > 0$), the cubic Schrödinger equation is globally well-posed, and its solution admits long-time decay estimates (for global well-posedness, see Theorem 2 and Remark 2 in \cite{B99}; for decay estimates, see Theorem 6.1 in \cite{HT86}):
\begin{proposition}\label{global_wp}
   Suppose $\la >0$, $(\phi, \, \psi)$ satisfies \eqref{inital_datum}-\eqref{weighted} with $s \geq 2$. Then  there exists a unique global solution $g_0 \in L^\infty( [0, \infty), H^s(\Rr^2) )$ to \eqref{ls_g_0}
   with the following estimate
\begin{align*}
     \|g_0(t, \cdot)\|_{H^s(\Rr^2)} \leq C_1(\phi, \psi, s), \quad \forall \, t \in [0, \infty). 
\end{align*}
Moreover, the following decay estimate holds
\begin{align*}
     \| g_0(t, \cdot) \|_{L^\infty(\Rr^2)} \leq C_2(\phi, \psi)(1 + t)^{-1}, \quad \forall \, t  \in (0, \infty).
\end{align*}
\end{proposition}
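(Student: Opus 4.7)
My plan is to combine three classical ingredients: (i) the local theory of Proposition~\ref{local_wp}, (ii) mass and energy conservation, and (iii) the pseudo-conformal decay method of Hayashi-Tsutsumi. Because $\lambda>0$, the energy
\[
E(g_0) = \tfrac{1}{4}\|\nabla g_0\|_{L^2}^2 + \tfrac{3\lambda}{8}\|g_0\|_{L^4}^4
\]
and the mass $\|g_0\|_{L^2}^2$ are both non-negative and conserved along \eqref{ls_g_0} (obtained in the standard way by pairing the equation with $\bar g_0$ and $\partial_t\bar g_0$, up to normalization factors coming from the $2i\partial_t$ convention). This yields a uniform-in-time $H^1$ bound and, via the blow-up alternative from Proposition~\ref{local_wp}, a global $H^1$ solution.

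Next I would establish pointwise decay by introducing the Galilean operator $J(t) = x + it\nabla$, adapted to the normalization of \eqref{ls_g_0}. Its key properties are that $J$ commutes with the free Schr\"odinger operator, acts on the gauge-invariant cubic nonlinearity via a Leibniz-type rule, and admits the factorization
\[
J(t)g_0 = it\, M_t\, \nabla\bigl(M_{-t}\, g_0\bigr), \qquad M_t(x) = e^{i|x|^2/(2t)}.
\]
Because the cubic nonlinearity is $L^2$-critical in two dimensions, the associated pseudo-conformal identity controls $\|J(t) g_0\|_{L^2}$ uniformly in $t$; the initial value $\|J(0)g_0(0)\|_{L^2}=\||x|g_0(0)\|_{L^2}$ is finite by \eqref{weighted}. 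The factorization above converts this into a $1/t$ gain on $\|\nabla(M_{-t}g_0)\|_{L^2}$, and coupling with the $W^{1,4/3}$ hypothesis in \eqref{weighted} through the two-dimensional Strichartz pair $(4,4/3)$ yields $\|g_0(t)\|_{L^\infty}\le C_{\phi,\psi}\,t^{-1}$ for $t\ge 1$. On the bounded interval $t\in[0,1]$ the Sobolev embedding $H^2\hookrightarrow L^\infty$ combined with the local $H^s$ bound from Proposition~\ref{local_wp} takes care of the estimate, producing overall $\|g_0(t)\|_{L^\infty}\le C_{\phi,\psi}(1+t)^{-1}$.

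Once the decay is in hand, propagating $H^s$ regularity uniformly in time becomes routine. Commuting $\nabla^s$ with \eqref{ls_g_0} and using the Leibniz-type bound $\bigl|\langle\nabla^s(|g_0|^2 g_0),\nabla^s g_0\rangle\bigr|\le C\|g_0\|_{L^\infty}^2\|g_0\|_{H^s}^2$ produces
\[
\tfrac{d}{dt}\|g_0(t)\|_{H^s}^2\le C(1+t)^{-2}\|g_0(t)\|_{H^s}^2,
\]
and Gronwall together with $\int_0^\infty(1+s)^{-2}\,\mathrm{d}s<\infty$ gives the uniform-in-time $H^s$ bound asserted in the proposition.

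The main obstacle is the second step. In two dimensions $H^1(\mathbb{R}^2)$ does not embed in $L^\infty(\mathbb{R}^2)$, so pointwise decay cannot be extracted by a naive Sobolev estimate on the $M_{-t}$-conjugated function; one must pair the $J$-control with $L^p$-dispersive estimates for the free Schr\"odinger group. This is precisely where the weighted regularity assumption \eqref{weighted} enters, and it is the technical core of the Hayashi-Tsutsumi argument cited in the statement. Everything else---the conservation laws, the blow-up alternative, and the final Gronwall bootstrap---is standard.
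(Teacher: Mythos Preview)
The paper does not give its own proof of this proposition. It is stated as a known result, with explicit citations to Bourgain~\cite{B99} (Theorem~2 and Remark~2) for global well-posedness and uniform $H^s$ control, and to Hayashi--Tsutsumi~\cite{HT86} (Theorem~6.1) for the $L^\infty$ decay. So there is no paper-specific argument to compare against; your outline is in effect a sketch of what those cited references contain, and in that respect the three-step structure (conservation laws $\Rightarrow$ global $H^1$; pseudo-conformal $J$-operator $\Rightarrow$ $t^{-1}$ decay; tame product estimate plus Gronwall with the integrable factor $(1+t)^{-2}$ $\Rightarrow$ uniform $H^s$) is the standard route and is correct.

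One point in Step~2 deserves tightening if you intend to write it out rather than cite. Controlling only $\|J g_0\|_{L^2}$ gives, after conjugation by $M_{-t}$, a single gradient in $L^2$, and in $\mathbb{R}^2$ that does not reach $L^\infty$. The Hayashi--Tsutsumi argument propagates $J$ \emph{together with} an ordinary derivative (this is exactly why the hypothesis in \eqref{weighted} is $|x|\phi,|x|\psi\in H^1$, not merely $L^2$) and then closes via a $J$-adapted Gagliardo--Nirenberg or dispersive inequality; the $W^{1,4/3}$ and $|x|\phi\in L^{4/3}$ assumptions feed the $L^p$ dispersive estimate. Your sentence about ``coupling with the $W^{1,4/3}$ hypothesis through the Strichartz pair $(4,4/3)$'' gestures at this but does not spell out which quantity is being propagated or which inequality closes the loop. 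Since you already flag this as ``the technical core of the Hayashi--Tsutsumi argument,'' this is a matter of exposition rather than a genuine gap.
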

As pointed out in \cite{FZ21},  the prefactor $C_2(\phi, \psi)$ in decay estimate depends not only on the specific norms of $\phi$ and $\psi$, but also on their profiles.

 Our first main result shows that the error estimates in the non-relativistic limit are of order $O(\e^2)$ with a prefactor growing algebraically in time $t$ up to long time interval:
\begin{theorem} \label{defo_1}
    Suppose $\la > 0$, $(\phi, \psi)$ satisfies \eqref{inital_datum}-\eqref{weighted} with $s  > 9$. Then the Cauchy problem of the cubic Klein-Gordon equation \eqref{K_L}--\eqref{inital_KL} admits a unique solution
    \begin{align*}
        u \in C([0, T_\e], H^{s - 8}(\Rr^2)),\quad   T_\e := \frac{T_0}{\e^{\a}}, \quad \a := \frac{2}{N_{\phi,\, \psi} + 1},
    \end{align*}
    with $T_0>0$  independent of $\e$.  Here $N_{\phi,\psi} = \max \{\hat{N} _{\phi,\psi},\, \tilde{N} _{\phi,\psi} \}$ and $\hat{N}_{\phi,\psi}$, $ \tilde{N}_{\phi,\psi}$ are defined in \eqref{definehatN} and \eqref{define_tildeN} with $K=2$. Moreover, the following error estimate holds
    \begin{align*} %\label{defo_1_estimate}
         \|u - ( e^{i\xth} g_0 + e^{-i\xth} \xwl{g}_0)(t, \cdot)\|_{H^{s-8}(\Rr^2)} \leq C_{\phi,\psi}(1 + t)^{N_{\phi,\psi}}  \e^2, \quad t \leq \frac{T_0}{\e^{\a}},
    \end{align*}
     with $\xth = t\e^{-2}$ here and in the sequel. 
\end{theorem}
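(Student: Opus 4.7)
The plan is to follow the geometric-optics recipe announced in the abstract: produce a high-order WKB approximation in the fast phase $\theta=t/\e^2$, control the growth in $t$ of every profile by means of the dispersive decay of the cubic NLS given by Proposition \ref{global_wp}, and then stabilize the remainder by an energy estimate on the Klein-Gordon equation \eqref{K_L}.

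First I would posit an approximate solution
\[
u_{\mathrm{app}}^{(K)}(t,x) \;=\; \sum_{k=0}^{K}\e^{2k}\!\!\sum_{\substack{j\in\Z\\ j\ \mathrm{odd},\ |j|\le 2k+1}} e^{ij\theta}\,U_{k,j}(t,x),\qquad U_{k,-j}=\overline{U_{k,j}},
\]
plug it into \eqref{K_L}, and collect powers of $\e$. The operator $\e^2\partial_{tt}-\Delta+\e^{-2}$ acting on $e^{ij\theta}U_{k,j}$ produces the harmonic symbol $(1-j^2)\e^{-2}+2ij\partial_t-\Delta +\e^2\partial_{tt}$; the cubic nonlinearity $\lambda u^3$ couples neighbouring harmonics. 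The $j=\pm1$ non-resonant cancellation forces $U_{0,1}=g_0$ to solve the cubic NLS \eqref{ls_g_0} as in the statement. Each higher amplitude $U_{k,j}$ is then determined algebraically when $j\ne\pm1$ and by a linear inhomogeneous Schrödinger equation (with zero initial data for $k\ge 1$ suitably adjusted) when $j=\pm1$, the right-hand side being polynomial in the lower-order $\{U_{\ell,j'}\}_{\ell<k}$ and their spatial derivatives. Matching \eqref{inital_KL} at $t=0$ by solving a triangular linear system in the $U_{k,\pm 1}(0,\cdot)$ fixes the Cauchy data, which lie in $H^{s-2k}$. Taking $K=2$ leaves a consistency residual of size $O(\e^{6})$ in a high-order Sobolev norm, with a power of $(1+t)$ that must be tracked.

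Second, the central analytic step is to bound the profiles $U_{k,\pm1}$ by Duhamel on the free Schrödinger semigroup, using that the sources are polynomials of $g_0$ and its derivatives plus their conjugates. Here the two-dimensional dispersive decay $\|g_0(t,\cdot)\|_{L^\infty}\lesssim(1+t)^{-1}$ of Proposition \ref{global_wp} is critical, in contrast with the $(1+t)^{-3/2}$ rate used in the three-dimensional work \cite{BLZ24}: integrating $(1+t)^{-1}$ against cubic source terms only produces logarithmic or low-power polynomial growth per iteration, which upon cascading through $k=1,2$ yields $H^{s-2k}$ estimates of the form $\|U_{k,\pm 1}(t,\cdot)\|_{H^{s-2k}}\le C_{\phi,\psi}(1+t)^{\widehat N_{\phi,\psi}}$, while $L^\infty_tL^p_x$–Strichartz-type norms on the sources produce the companion exponent $\widetilde N_{\phi,\psi}$. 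These are precisely the two indices quoted in \eqref{definehatN}--\eqref{define_tildeN}, and $N_{\phi,\psi}=\max\{\widehat N,\widetilde N\}$ is their combined bookkeeping. This is the most delicate part of the argument: the slower 2D decay means that no a priori uniform-in-time bound on the correctors is available, so the whole scheme has to absorb a polynomial-in-$t$ loss from the start.

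Third, set $R=u-u_{\mathrm{app}}^{(2)}$. Then $R$ satisfies a perturbed Klein-Gordon equation
\[
\e^2\partial_{tt}R-\Delta R+\e^{-2}R \;=\; -\lambda\bigl(u^3-(u_{\mathrm{app}}^{(2)})^3\bigr)+\e^{6}\,\mathcal{R}_\e(t,x),
\]
with $\|\mathcal{R}_\e\|_{H^{s-8}}\le C_{\phi,\psi}(1+t)^{N_{\phi,\psi}}$ and zero initial data. The cubic difference splits as a sum of terms each linear in $R$ with coefficient $u^2+u\,u_{\mathrm{app}}^{(2)}+(u_{\mathrm{app}}^{(2)})^2$. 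I would then work with the Klein-Gordon energy $E_s(t)=\tfrac12\int(\e^2|\partial_t R|^2+|\nabla R|^2+\e^{-2}|R|^2)\,dx$ at level $s-8$ (the loss of eight derivatives comes from using $H^{s-8}\hookrightarrow L^\infty$ in 2D together with four derivatives spent on the Moser-type estimate of the cubic term and four more on the residual bookkeeping). The coefficient of $R$ in the energy inequality is controlled in $L^\infty$ by $\|u_{\mathrm{app}}^{(2)}\|_{L^\infty}^2\le C_{\phi,\psi}(1+t)^{2N_{\phi,\psi}}$, together with an $L^\infty$ bound on $u$ obtained by bootstrapping. Grönwall then gives
\[
\sqrt{E_{s-8}(t)}\;\le\; C_{\phi,\psi}\,\e^{4}(1+t)^{N_{\phi,\psi}+1}\exp\!\Bigl(C_{\phi,\psi}\!\int_0^t(1+\tau)^{2N_{\phi,\psi}}\,d\tau\Bigr),
\]
which, on the time scale $t\le T_0\e^{-\alpha}$ with $\alpha=2/(N_{\phi,\psi}+1)$, yields the announced bound $\|R(t,\cdot)\|_{H^{s-8}}\le C_{\phi,\psi}(1+t)^{N_{\phi,\psi}}\e^{2}$. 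A standard continuity argument closes the bootstrap on $\|u\|_{L^\infty}$ and extends the local solution of \eqref{K_L} up to $T_\e$, delivering both the existence statement and the error bound of Theorem \ref{defo_1}. The main obstacle throughout is the critically slow 2D NLS decay, which is what forces the non-trivial time exponent $N_{\phi,\psi}$ and the shorter-than-$1/\e$ time scale.
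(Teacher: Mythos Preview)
Your overall architecture---WKB ansatz in $\theta=t/\e^2$, profile estimates via the NLS decay, then an energy/Gr\"onwall bound for the remainder---matches the paper's. The serious gap is in the third step. You control the linear-in-$R$ coefficient by $\|u_{\mathrm{app}}^{(2)}\|_{L^\infty}^2\le C_{\phi,\psi}(1+t)^{2N_{\phi,\psi}}$ and feed this into Gr\"onwall as $\exp\bigl(C_{\phi,\psi}\int_0^t(1+\tau)^{2N_{\phi,\psi}}\,d\tau\bigr)$. On the target time scale $t\sim T_0\e^{-\alpha}$ this exponent is of order $\e^{-\alpha(2N_{\phi,\psi}+1)}\to\infty$, so the inequality yields nothing and the bootstrap cannot close. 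The $L^\infty$ decay of $g_0$ has to be used \emph{again} at this stage, not only for the profiles: since $u_{\mathrm{app}}=u_0+\e^2u_2+\e^4u_4$ with $\|u_0(t,\cdot)\|_{L^\infty}\le 2C_2(\phi,\psi)(1+t)^{-1}$ while the correctors contribute only $O(\e^\alpha)$ in $L^\infty$ on $[0,T_0\e^{-\alpha}]$, the refined product estimate $\|u_a^2\dot u\|_{H^{\tilde{s}}}\le C\|u_a\|_{H^{\tilde{s}}}\|u_a\|_{L^\infty}\|\dot u\|_{H^{\tilde{s}}}$ turns the Gr\"onwall coefficient into $\tilde N_{\phi,\psi}(1+t)^{-1}+C_{\phi,\psi}\e^\alpha$. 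This integrates to $\tilde N_{\phi,\psi}\ln(1+t)+O(T_0)$ and the exponential becomes the \emph{polynomial} factor $(1+t)^{\tilde N_{\phi,\psi}}$. That is exactly the origin of $\tilde N_{\phi,\psi}$ in \eqref{define_tildeN}: it is the Gr\"onwall exponent $12\lambda\tilde M(\tilde{s}-1)C_1C_2$ built from the uniform $H^s$ bound $C_1$ and the $L^\infty$-decay constant $C_2$ of $g_0$ in Proposition~\ref{global_wp}, not a Strichartz-type quantity as you suggest.

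Two smaller discrepancies. The paper runs the stability argument on the first-order symmetric hyperbolic reformulation $U=(\e\nabla u,\e^2\partial_t u,u)^{\rm T}$ with rescaled perturbation $\dot U=\e^{-(K+1)}(U-U_a)$, which makes the linear semigroup unitary on every $H^s$ and the bookkeeping cleaner than the second-order energy you propose. And the consistency residual at $K=2$ is $O(\e^{3})$ in that system, not $O(\e^6)$; the final $O(\e^2)$ error for $u-u_0$ is obtained by combining $\|u-u_a\|_{H^{s-8}}\le C_{\phi,\psi}(1+t)^{N_{\phi,\psi}+1}\e^3$ with the corrector bound $\e^2\|u_2\|_{H^{s-8}}\le C_{\phi,\psi}(1+t)^{\hat N_{\phi,\psi}}\e^2$, the latter being dominant on the allowed interval.
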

We also show that the convergence rate is of order $O(\varepsilon)$ for initial data with limited regularity.
\begin{theorem} \label{defo_2}
    Suppose $ \la > 0 $, $(\phi, \psi)$ satisfies \eqref{inital_datum}-\eqref{weighted} with $s  > 5$. Then the Cauchy problem of the cubic Klein-Gordon equation \eqref{K_L}--\eqref{inital_KL} admits a unique solution
\begin{align*}
    u \in C([0, T_\e], H^{s-4}(\mathbb{R}^2)), \quad
    T_\e := \frac{T_1}{\epsilon^{\frac{\alpha}{2}}}, \quad \alpha := \min\{1, \frac{2}{\tilde{N}_{\phi,\psi} + 1}\},
\end{align*} %一般都是不需要空格的 $$里面，要有统一的习惯
with $T_1>0$ independent of $\e$. Here $ \tilde{N}_{\phi,\psi}$ is defined in  \eqref{define_tildeN} with $K = 0$.  Moreover, the following error estimate holds
\begin{align*}
    \|u - (e^{i\xth}g_0 + e^{-i\xth}\bar{g}_0)(t, \cdot)\|_{H^{s-4}(\mathbb{R}^2)} \leq C_{\phi,\psi} \left( (1 + t) + (1 + t) ^ {\tilde{N}_{\phi,\psi}} \right) \epsilon, \quad t \leq \frac{T_1}{\epsilon^{\frac{\alpha}{2}}},
\end{align*} %%\leq 写成\leq 就行了
 
\end{theorem}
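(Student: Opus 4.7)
The plan is to recast the non-relativistic limit as a stability problem in geometric optics, in line with the strategy announced in the abstract. As a first step, I build a two-scale approximate solution
\begin{align*}
    u_{app}(t,x) := U_0(t,x,\theta) + \varepsilon^2 U_1(t,x,\theta), \qquad \theta = t/\varepsilon^2,
\end{align*}
with $U_0, U_1$ trigonometric polynomials in the fast phase $\theta$. Taking $U_0(t,x,\theta) = e^{i\theta} g_0(t,x) + e^{-i\theta}\bar g_0(t,x)$ and plugging into the Klein-Gordon operator $L_\varepsilon := \varepsilon^2 \partial_{tt} - \Delta + \varepsilon^{-2}$, the coefficients of $e^{\pm i\theta}$ at order $\varepsilon^0$ in $L_\varepsilon u_{app} + \lambda u_{app}^3$ vanish exactly when $g_0$ satisfies the cubic Schr\"odinger equation \eqref{ls_g_0}. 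The remaining $O(1)$ non-resonant forcing $\lambda(e^{3i\theta} g_0^3 + \text{c.c.})$ is killed by choosing $U_1 = -\tfrac{\lambda}{8}(e^{3i\theta} g_0^3 + e^{-3i\theta}\bar g_0^3)$, exploiting the algebraic factor $9-1 = 8$ that the symbol of $L_\varepsilon$ produces at phase $3\theta$. After this cancellation the residual $R_\varepsilon := L_\varepsilon u_{app} + \lambda u_{app}^3$ is pointwise $O(\varepsilon^2)$. Using the Schr\"odinger equation to convert every $\partial_t g_0$ into spatial derivatives of $g_0$, I then bound $\|R_\varepsilon(t,\cdot)\|_{H^{s-4}} \leq C_{\phi,\psi}(1+t)^{\tilde N_{\phi,\psi}}\varepsilon^2$, with $\tilde N_{\phi,\psi}$ encoding the polynomial-in-time growth of the high Sobolev norms of $g_0$ permitted by Proposition \ref{global_wp}.

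Second, I run an $\varepsilon$-weighted Klein-Gordon energy estimate on the error $w := u - u_{app}$, which satisfies $L_\varepsilon w = -R_\varepsilon - \lambda\, w\,(u^2 + u\,u_{app} + u_{app}^2)$. Defining
\begin{align*}
    E(t) := \varepsilon^2 \|\partial_t w(t)\|_{H^{s-4}}^2 + \|\nabla w(t)\|_{H^{s-4}}^2 + \varepsilon^{-2}\|w(t)\|_{H^{s-4}}^2,
\end{align*}
testing the equation against $\partial_t w$ after commuting with $\langle D_x\rangle^{s-4}$, and applying tame Moser estimates together with the Sobolev embedding $H^{s-4}\hookrightarrow L^\infty$ (valid since $s>5$), I obtain
\begin{align*}
    \tfrac{d}{dt} E(t) \leq C\bigl(\|u_{app}\|_{L^\infty}^2 + \|w\|_{L^\infty}^2\bigr)\,E(t) + C\,\varepsilon^{-1}\|R_\varepsilon\|_{H^{s-4}}\sqrt{E(t)}.
\end{align*}
The decay estimate of Proposition \ref{global_wp} gives $\|u_{app}\|_{L^\infty} \leq C_{\phi,\psi}(1+t)^{-1}$, hence the quadratic coefficient is integrable in $t$. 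Gronwall's inequality combined with the residual bound yields $\sqrt{E(t)} \leq C_{\phi,\psi}\bigl((1+t)+(1+t)^{\tilde N_{\phi,\psi}}\bigr)\varepsilon$, the first contribution reflecting the uniformly bounded part of $R_\varepsilon$ (the $\varepsilon^2 \partial_{tt} g_0$ piece) integrated in time, and the second its polynomially growing part. Since $\|w\|_{H^{s-4}}^2 \leq \varepsilon^2 E(t)$ and $\|u_{app} - U_0\|_{H^{s-4}} = O(\varepsilon^2)$ is absorbed, this delivers the stated bound on $\|u - (e^{i\theta}g_0 + e^{-i\theta}\bar g_0)\|_{H^{s-4}}$.

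The existence of $u$ on $[0, T_\varepsilon]$ is produced by combining classical local well-posedness of \eqref{K_L} in $H^{s-4}$ with the a priori bound above through a standard continuation argument. The main obstacle, and what fixes the exponent $\alpha = \min\{1,\,2/(\tilde N_{\phi,\psi}+1)\}$, is closing the $L^\infty$ bootstrap for $w$ over the long window $[0, T_\varepsilon] = [0,\,T_1\varepsilon^{-\alpha/2}]$: the constraint $\alpha \leq 2/(\tilde N_{\phi,\psi}+1)$ guarantees $(1+T_\varepsilon)^{\tilde N_{\phi,\psi}}\varepsilon = o(1)$ so that the bootstrap survives, while $\alpha \leq 1$ prevents the Klein-Gordon energy estimate from losing further powers of $\varepsilon^{-1}$ through the $\partial_t w$ terms in the source. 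Carefully matching the exponent $\tilde N_{\phi,\psi}$ — produced by tracking how many times $\partial_t g_0$ is traded for $\Delta g_0$ and $|g_0|^2 g_0$ via \eqref{ls_g_0} when estimating $R_\varepsilon$ — with the constant defined in \eqref{define_tildeN} at $K = 0$ is the most delicate bookkeeping in the proof.
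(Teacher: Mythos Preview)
Your overall architecture---build a two-term approximate solution, measure the error in the $\varepsilon$-weighted Klein--Gordon energy, close by Gronwall and bootstrap---is the same as the paper's (the energy $E(t)$ is essentially $\varepsilon^{-2}\|U-U_a\|_{H^{s-4}}^2$ in the hyperbolic reformulation). The construction of $u_{app}$ and the residual computation are fine.

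The genuine gap is your account of where the exponent $\tilde N_{\phi,\psi}$ comes from. You attribute it to polynomial-in-time growth of the Sobolev norms of $g_0$ and to ``how many times $\partial_t g_0$ is traded for $\Delta g_0$''. But Proposition~\ref{global_wp} gives $\|g_0(t)\|_{H^s}\le C_1(\phi,\psi,s)$ \emph{uniformly} in $t$; consequently the residual $R_\varepsilon$ is $O(\varepsilon^2)$ uniformly in time, with no polynomial growth at all. The polynomial factor $(1+t)^{\tilde N_{\phi,\psi}}$ enters only through the Gronwall step, and your energy inequality misses the mechanism that produces it. When you commute with $\langle D_x\rangle^{s-4}$ and apply the tame product estimate to the cubic term $w\,(u^2+uu_{app}+u_{app}^2)$, you get not only $\|w\|_{H^{s-4}}\|u_{app}\|_{L^\infty}^2$ but also $\|w\|_{L^\infty}\,\|u_{app}\|_{H^{s-4}}\|u_{app}\|_{L^\infty}$. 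The second contribution is the dominant one: it decays only like $C_1 C_2(1+t)^{-1}$, not $(1+t)^{-2}$, and it is \emph{not} integrable. Integrating it gives $\tilde N_{\phi,\psi}\log(1+t)$ in the exponent, hence the Gronwall factor $(1+t)^{\tilde N_{\phi,\psi}}$; this is exactly what the definition~\eqref{define_tildeN} records (the constant $12\lambda\tilde M C_1 C_2$). With your stated inequality---coefficient $\|u_{app}\|_{L^\infty}^2$, which \emph{is} integrable---Gronwall would produce a bounded factor, contradicting your own conclusion $\sqrt{E}\lesssim (1+t)^{\tilde N_{\phi,\psi}}\varepsilon$.

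So the fix is concrete: replace the coefficient $\|u_{app}\|_{L^\infty}^2$ in your differential inequality by the correct tame-estimate output $\|u_{app}\|_{L^\infty}\|u_{app}\|_{H^{s-4}}\sim \tilde N_{\phi,\psi}(1+t)^{-1}$, drop the claim that $R_\varepsilon$ grows polynomially (it does not), and identify $\tilde N_{\phi,\psi}$ with the non-integrable Gronwall rate rather than with any property of the residual. The $(1+t)$ term in the final bound then comes from time-integrating the uniformly bounded source $\varepsilon^{-1}\|R_\varepsilon\|_{H^{s-4}}=O(\varepsilon)$, and the $(1+t)^{\tilde N_{\phi,\psi}}$ from the Gronwall amplification; your explanation of $\alpha\le 1$ is likewise off---it is a bookkeeping constraint in the bootstrap, not a feature of ``$\partial_t w$ terms in the source''.
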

We can actually show the validity of the Schr\"odinger approximation up to an arbitrary order with sufficiently smooth initial data. For example, the approximation of order $O(\e^4)$ was already observed in the numerical studies in  \cite{SZ20} and \cite{BZ19_2}. This constitutes the following theorem.
\begin{theorem} \label{defo_3}
   Suppose $\lambda > 0$, $K \in \mathbb{Z}_{+} \cap \mathbb{Z}_{\rm even}$ and $(\phi, \psi)$ satisfies \eqref{inital_datum}-\eqref{weighted} with $s > 2K + 5$. Then the Cauchy problem of the cubic Klein-Gordon equation \eqref{K_L}-\eqref{inital_KL} admits a unique solution 
   \begin{align*}
     u \in C([0, T_\varepsilon], H^{s - 2K - 4}(\mathbb{R}^2)), \quad T_\varepsilon &:= \frac{T_2}{\e^{\a}}, \quad \alpha := \frac{2}{N_{\phi,\psi} + 1},
   \end{align*}
   with $T_2 > 0$ independent of $\e$. Here $N_{\phi,\psi} = \max \{\hat{N} _{\phi,\psi},\, \tilde{N} _{\phi,\psi} \}$ and $\hat{N}_{\phi,\psi}$, $ \tilde{N}_{\phi,\psi}$ are defined in \eqref{definehatN} and \eqref{define_tildeN}. Moreover, the following error estimate holds
    \begin{align} \label{defo_3_estimate1}
    \begin{split}
        &\|(u - u_a)(t, \cdot)\|_{H^{s-2K - 4}(\mathbb{R}^2)}  
        \leq  C_{\phi,\psi}(1 + t)^{ \frac{K}{2} N_{\phi,\psi}+\frac{K}{2}} \e^{K+1}, \quad t \leq \frac{T_2}{\e^{\a}},
    \end{split}
    \end{align} 
    where $u_a$ is the approximate solution having the  form
    \begin{align} \label{form_Ua}
        u_a = u_0 + \e^2 u_2 + \cdots + \e^{K} u_{K} + \e^{K+2} u_{K+2},
    \end{align}
    with
   \begin{align} \label{form_Ua1}
       u_0 := e^{i\xth}g_0 + e^{-i\xth}\bar{g}_0, \quad u_n = \sum_{|p| \leq n+1} e^{i p \theta} u_{n,p}.
   \end{align}
   In addition, for all $n = 2, 4, \ldots, K$, $|p| \leq n + 1$ and $t \in (0, \infty)$, the following estimates hold
   \begin{align} \label{defo_3_estimate2}
   \begin{split}
        &\|g_0(t, \cdot)\|_{H^{s}} \leq C_{\phi,\psi}, \\%%s_a 不是sa吧， 我不确定你自己确认一下，
      &\|u_{n,p}(t, \cdot)\|_{H^{s-2n}} \leq C_{\phi,\psi} (1 + t)^{\frac{n}{2} \hat{N}_{\phi,\psi} + \frac{n-2}{2}}, \\
       &\|u_{K+2,p}(t, \cdot)\|_{H^{s-2K-2}} \leq C_{\phi,\psi} (1 + t)^{\frac{K}{2} \hat{N}_{\phi,\psi} + \frac{K-2}{2}}.
    \end{split}
    \end{align}
 
 \end{theorem}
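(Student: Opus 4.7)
The plan is to follow the geometric optics / WKB framework announced in the introduction: construct a formal expansion $u_a$ term by term, establish the profile estimates inductively, and then close the error estimate by an energy and bootstrap argument on $w := u - u_a$. To that end I would substitute the ansatz $u_a = \sum_{n=0}^{K+2} \e^n u_n$ with $u_n = \sum_{|p| \le n+1} e^{i p \xth} u_{n,p}$ and $\xth = t\e^{-2}$, enforcing $u_{n,-p} = \overline{u_{n,p}}$ so that $u_a$ is real, into the Klein-Gordon equation and expand in powers of $\e$. Each harmonic $e^{i p \xth}$ inherits from $\e^2 \pa_{tt} + \e^{-2}$ the algebraic factor $1 - p^2$, so the cascade splits at each order $\e^n$ into two families: for $|p| \neq 1$ the factor is nonzero and the equation is algebraic, giving $u_{n,p}$ as a cubic polynomial in already-constructed $u_{m,q}$ with $m < n$; for $|p| = 1$ the factor vanishes and solvability forces a Schrödinger-type evolution for $u_{n,1}$, with forcing cubic in lower-order profiles plus terms generated by $\pa_{tt} u_{m,1}$. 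The lowest order recovers $u_{0,1} = g_0$ solving the cubic NLS; at orders $n = 2, 4, \ldots, K$ one reads off linear Schrödinger equations for $u_{n,1}$; at the top order $n = K+2$ only the nonresonant corrections are retained, so that the residual has size $\e^{K+1}$.

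\textbf{Profile estimates.} I would prove the bounds on $u_{n,p}$ by strong induction on $n$. The algebraic profiles are controlled by the $H^s$-algebra inequality combined with the induction hypothesis, the progressive loss of $2n$ derivatives at level $n$ being exactly what allows the multilinear estimates to close. For the polarization profiles $u_{n,1}$, I would apply the standard energy inequality for the linear Schrödinger equation in $H^{s-2n}$ and integrate the forcing. Proposition~\ref{global_wp} is decisive here: each cubic forcing contains at least one factor of $g_0$ through the polarization structure, and the decay $\|g_0(t,\cdot)\|_{L^\infty} \lesssim (1+t)^{-1}$ tempers the nonlinearity and keeps the time growth polynomial. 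Careful bookkeeping — every Schrödinger step contributes one factor $(1+t)$ relative to its forcing, while every algebraic step inherits the factor $\hat N_{\phi,\psi}$ already accumulated at the previous level — produces exactly the exponents $\tfrac{n}{2}\hat N_{\phi,\psi} + \tfrac{n-2}{2}$ asserted in the statement.

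\textbf{Error estimate.} By construction the residual
\[
R_\e := \e^2 \pa_{tt} u_a - \De_x u_a + \e^{-2} u_a + \la u_a^3
\]
satisfies $\|R_\e(t,\cdot)\|_{H^{s-2K-4}} \le C_{\phi,\psi} (1+t)^{\frac{K}{2} N_{\phi,\psi} + \frac{K}{2}} \e^{K+1}$. Setting $w := u - u_a$ with vanishing initial data, the perturbation equation
\[
\e^2 \pa_{tt} w - \De_x w + \e^{-2} w + \la \bigl(3 u_a^2 w + 3 u_a w^2 + w^3\bigr) = -R_\e
\]
together with the $\e$-weighted Klein-Gordon energy
\[
\En_s(w) := \tfrac12 \Bigl(\|\e \pa_t w\|_{H^{s-2K-4}}^2 + \|\nabla w\|_{H^{s-2K-4}}^2 + \e^{-2}\|w\|_{H^{s-2K-4}}^2\Bigr)
\]
yields, after differentiation in time and use of the $L^\infty$ bound on $u_a$ inherited from $g_0$, a Gronwall-type differential inequality for $\En_s(w)^{1/2}$ whose integration gives $\En_s(w)^{1/2} \le C_{\phi,\psi}(1+t)^{\frac{K}{2} N_{\phi,\psi} + \frac{K}{2}} \e^{K+1}$. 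A bootstrap maintaining $\|w\|_{L^\infty} \le 1$ then closes precisely as long as $t \le T_2/\e^{\alpha}$ with $\alpha = 2/(N_{\phi,\psi}+1)$, fixing the lifespan claimed in the theorem.

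\textbf{Main obstacle.} Compared with the three-dimensional analogue treated in \cite{BLZ24}, the essential difficulty here is the weaker Schrödinger dispersion in two dimensions: the $L^\infty$ decay is only $(1+t)^{-1}$ instead of $(1+t)^{-3/2}$, so the cubic forcings in the cascade are no longer integrable in time and the correctors $u_{n,p}$ genuinely grow in $t$. Tracking these growths precisely through the cascade — so that they aggregate to the specific exponents asserted in the statement and, in turn, yield the optimal lifespan $\e^{-\alpha}$ rather than a shorter one — is the central technical task of the proof.
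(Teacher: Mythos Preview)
Your overall strategy --- WKB cascade with the resonant/nonresonant split, inductive profile bounds exploiting the $(1+t)^{-1}$ decay of $g_0$ from Proposition~\ref{global_wp}, and a Gronwall/bootstrap stability argument --- is exactly the paper's. The paper, however, carries out the stability step differently: instead of the scalar weighted energy $\En_s(w)$, it rewrites \eqref{K_L} as the first-order symmetric hyperbolic system \eqref{K_L_change} for $U=(\e\nabla u,\,\e^2\pa_t u,\,u)$, builds the vector WKB solution $U_a$, and uses the unitary semigroup $S(t)=\exp\bigl((\e^{-1}A(\pa_x)-\e^{-2}A_0)t\bigr)$ together with Duhamel's formula on $\dot U=(U-U_a)/\e^{K+1}$. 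This packaging makes the $\e$-accounting transparent (every estimate is at the natural $\e^0$ scale for $\dot U$) and avoids the asymmetric $\e$-weights in your energy; the scalar bound \eqref{defo_3_estimate1} then drops out as the last component of $\|U-U_a\|_{H^{\tilde s}}$.

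Your energy route is viable, but two bookkeeping points need repair. First, the residual exponent: the construction actually yields $\|R_\e(t,\cdot)\|_{H^{s-2K-4}}\le C_{\phi,\psi}(1+t)^{\frac{K}{2}\hat N_{\phi,\psi}+\frac{K-2}{2}}\e^{K+1}$, one power of $(1+t)$ lower than you wrote; the final exponent $\tfrac{K}{2}N_{\phi,\psi}+\tfrac{K}{2}$ arises only after the time integration in Gronwall. Second, since $\En_s(w)$ contains $\e^{-2}\|w\|^2$, one has $\|w\|_{H^{\tilde s}}\le\sqrt2\,\e\,\En_s^{1/2}$, while the Gronwall forcing picks up a factor $\e^{-1}$ from $\|\pa_t w\|\le\e^{-1}\sqrt{2\En_s}$. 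Integrating $\e^{-1}\|R_\e\|\sim\e^{K}$ therefore gives $\En_s^{1/2}\le C_{\phi,\psi}(1+t)^{\frac{K}{2}(N_{\phi,\psi}+1)}\e^{K}$, not $\e^{K+1}$ as you claim; the missing factor of $\e$ is recovered precisely from $\|w\|\le\sqrt2\,\e\,\En_s^{1/2}$. Also, the initial data for $w$ are not vanishing: the WKB initial-data matching (Proposition~\ref{initalUn}) only guarantees $(w,\e\pa_t w)(0)=O(\e^{K+2})$, which is harmless but should be tracked.
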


Similarly, we have the following results for the focusing case ($\la < 0$).
\begin{theorem} \label{fo_1}
    Suppose $\la < 0$, $K \in \mathbb{Z}_{+} \cap \mathbb{Z}_{\rm even}$ and $(\phi, \psi) $ satisfies \eqref{inital_datum} with $s > 2K + 5$. Let $ T^*$ be the existence time to the Cauchy problem of the cubic Schr\"{o}dinger equation \eqref{ls_g_0}. Then the Cauchy problem of the cubic Klein-Gordon equation \eqref{K_L}-\eqref{inital_KL}admits a unique solution
    \begin{align*}
        u \in C([0, T^*_\e), H^{s-2K-4}(\mathbb{R}^2)), 
    \end{align*}
    where the existence time $T^*_\e$ satisfies
    \begin{align*}  %%其实我习惯都是用label的，你如果都是tag，也就全部用tag，用label就全用label
    \liminf_{\e \to 0} T^*_\e \geq T^*.
    \end{align*}
    Moreover, the following error estimate holds
    \begin{align*} %%公式前面不需要用一个quad的 sa的地方都需要去改
     \|(u - u_a)(t, \cdot)\|_{H^{s-2K-4}(\mathbb{R}^2)} \leq C_{\phi,\psi}(t)\e^{K + 1}, \quad t < \min\{T^*, T^*_\e\},
    \end{align*} %% 这个地方肯定不能超过区域.
    where $u_a$ has the same form as \eqref{form_Ua} and \eqref{form_Ua1}. For all $n = 2, \,4,\, \cdots,\, K+2$, $|p| \leq n + 1$ and $t < \min\{T^*, T^*_\e\}$, the following estimates hold
    \begin{align*}
      \|g_0(t, \cdot)\|_{H^{s}} \leq C_{\phi,\psi}(t), \quad \|u_{n,p}(t, \cdot)\|_{H^{s-2n}} \leq C_{\phi,\psi}(t).
    \end{align*}
    In particular,
    \begin{align*} 
      \|u - (e^{i\xth}g_0 + e^{-i\xth}\bar{g}_0 ) \|_{H^{s-2K-4}(\mathbb{R}^2)} \leq C_{\phi,\psi}(t)\e^2, \quad t < \min \{T^*, T^*_\e\}.
    \end{align*} %%这里肯定是错的，少了括号 
\end{theorem}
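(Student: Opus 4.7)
The plan is to adapt the WKB / geometric-optics construction used for the defocusing Theorem~\ref{defo_3} to the focusing regime. The essential simplification is that only local existence of $g_{0}$ is available (Proposition~\ref{local_wp}) and no long-time decay is used; consequently every estimate carries a prefactor $C_{\phi,\psi}(t)$ which is merely continuous and increasing on any compact $[0,T]\subset[0,T^{*})$, and no polynomial-in-$t$ bookkeeping or long-time window $\e^{-\a}$ is needed.

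First I would insert the ansatz \eqref{form_Ua}--\eqref{form_Ua1} (with the reality condition $u_{n,-p}=\overline{u_{n,p}}$) into \eqref{K_L}, expand $\la u_{a}^{3}$, and sort by harmonics in $\xth$. The identity
\[
(\e^{2}\pa_{tt}+\e^{-2})\bigl(e^{ip\xth}f\bigr)=e^{ip\xth}\bigl(\e^{2}\pa_{tt}f+2ip\pa_{t}f+(1-p^{2})\e^{-2}f\bigr)
\]
shows that for non-resonant harmonics $|p|\ne 1$ each profile $u_{n,p}$ is determined algebraically from lower-order data (division by $1-p^{2}$), while for $|p|=1$ one obtains an evolutionary equation. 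At order $\e^{2}e^{\pm i\xth}$ the latter reduces exactly to the cubic Schr\"odinger equation \eqref{ls_g_0} for $g_{0}=u_{2,1}$; at higher orders $\e^{n}e^{\pm i\xth}$ with $4\le n\le K+2$ it becomes a linear Schr\"odinger equation with source built from already-constructed profiles. Using Proposition~\ref{local_wp} and propagating regularity inductively (each step costing two spatial derivatives), one obtains $u_{n,p}\in C([0,T^{*});H^{s-2n})$ with norm bounded by $C_{\phi,\psi}(t)$. By design the residual $R_{\e}:=\e^{2}\pa_{tt}u_{a}-\De_{x}u_{a}+\e^{-2}u_{a}+\la u_{a}^{3}$ obeys $\|R_{\e}(t,\cdot)\|_{H^{s-2K-4}}\le C_{\phi,\psi}(t)\,\e^{K+3}$.

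The error $r:=u-u_{a}$ then satisfies a perturbed Klein-Gordon equation with right-hand side $-R_{\e}-\la\bigl((u_{a}+r)^{3}-u_{a}^{3}\bigr)$ and initial data of size $O(\e^{K+2})$ coming from truncation of the profile expansion at $t=0$. Introducing the singular Klein-Gordon energy
\[
\En(t):=\tfrac12\bigl\|\e\pa_{t}r\bigr\|_{H^{s-2K-4}}^{2}+\tfrac12\bigl\|\Grad r\bigr\|_{H^{s-2K-4}}^{2}+\tfrac12\e^{-2}\|r\|_{H^{s-2K-4}}^{2}
\]
and using tame Moser estimates (legitimate because $s-2K-4>1$ ensures $H^{s-2K-4}\hookrightarrow L^{\infty}$), one obtains $\tfrac{d}{dt}\En(t)\le C_{\phi,\psi}(t)\bigl(\En(t)+\e^{2K+4}\bigr)$. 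Since $\En(0)\le C\e^{2K}$, Gronwall yields $\|r\|_{H^{s-2K-4}}\le C_{\phi,\psi}(t)\,\e^{K+1}$. A standard continuation argument, bootstrapping smallness of $r$ to keep $\|u\|_{H^{s-2K-4}}$ bounded on $[0,T]$ for any $T<T^{*}$, then precludes blow-up of $u$ before $T^{*}$ and gives $\liminf_{\e\to 0}T^{*}_{\e}\ge T^{*}$. The two estimates displayed in the statement follow immediately.

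The main obstacle is the stability step: the singular $\e^{-2}$ coefficient in \eqref{K_L} forces one to use the weighted energy $\En$ and effectively costs a factor $\e^{-2}$ when converting residual bounds into error bounds, so the construction must be tuned so that the residual ($\e^{K+3}$) and the initial mismatch ($\e^{K+2}$) both produce the advertised error size $\e^{K+1}$; the two-derivative loss per profile stage is what dictates the final regularity index $s-2K-4$, and the continuation argument must be carried out uniformly in $\e$ on each $[0,T]\subset[0,T^{*})$.
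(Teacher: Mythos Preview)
Your plan is correct and essentially parallel to the paper's, but the stability step is packaged differently. The paper first rewrites \eqref{K_L} as the first-order symmetric hyperbolic system \eqref{K_L_change} for $U=(\e\nabla u,\,\e^{2}\pa_{t}u,\,u)$, builds the WKB approximation $U_{a}$ (Theorem~\ref{f_remainder_estimate}), and proves stability of $U_{a}$ via Duhamel's formula using that the propagator $\exp\bigl((\e^{-1}A(\pa_{x})-\e^{-2}A_{0})t\bigr)$ is unitary on $H^{\tilde s}$ (Theorem~\ref{f_ex}); Theorem~\ref{fo_1} is then simply the $u$-component of that result. Your weighted energy $\En$ is exactly the squared $H^{\tilde s}$-norm of $U-U_{a}$ in this first-order formulation, so the two arguments are equivalent: the first-order/Duhamel route makes the $\e$-uniform unitarity transparent, while your direct second-order energy avoids introducing the auxiliary variables.

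A few bookkeeping points to correct. In the paper's indexing $g_{0}=u_{0,1}$ (not $u_{2,1}$), and the Schr\"odinger equation for $g_{0}$ arises at order $\e^{0}$, not $\e^{2}$. The residual of $u_{a}$ in the second-order equation is $O(\e^{K+2})$, not $O(\e^{K+3})$: only the profile equations through order $\e^{K}$ are enforced, and the unmatched $p=1$ equation at order $\e^{K+2}$ (namely $\pa_{tt}g_{K}+f(u_{a})_{K+2,1}$) does not vanish. Finally, with the paper's choice of $g_{n}(0,\cdot)$ (Proposition~\ref{initalUn}) one actually gets $\|r(0)\|=O(\e^{K+2})$ and, crucially, $\|\e\pa_{t}r(0)\|=O(\e^{K+1})$ (this uses that $v_{n}(0)=0$ for $2\le n\le K+1$, which forces $(\e^{2}\pa_{t}u_{a})(0)=\psi+O(\e^{K+2})$); hence $\En(0)=O(\e^{2K+2})$ rather than $O(\e^{2K})$. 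With these corrected exponents your Gronwall step still yields $\sqrt{\En}\le C_{\phi,\psi}(t)\,\e^{K+1}$ and thus $\|r\|_{H^{\tilde s}}\le C_{\phi,\psi}(t)\,\e^{K+1}$, as required.
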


The rest of the paper is devoted to proving the above theorems. We will reformulate non-relativistic limit problems into stability problems of the WKB approximate solutions in geometric optics, as detailed in Section \ref{reformulation}. The error estimates above can be seen as corollaries of the stability results established in Section  \ref{reformulation}, as detailed in Section \ref{error}.

\section{Reformulation}  \label{reformulation}

\subsection{The equivalent symmetric hyperbolic system}
We will rewrite the cubic Klein-Gordon equation as a symmetric hyperbolic system. Specifically, we define
\begin{align}   \label{define_U}
    U := (w^{\rm T},\, v,\, u)^{\rm T} := ( \e \pa_{x_1} u,\, \e \pa_{x_2} u,\, \e^2 \pa_t u,\, u )^{\rm T},
\end{align} 
where $w := \e \nabla u = \e (\pa_{x_1} u,\, \pa_{x_2} u)^{\rm T}, v := \e^2 \pa_t u$.   Then, equation \eqref{K_L} is equivalent to
\begin{align}  \label{K_L_change}
    \pa_t U - \frac{1}{\e} A(\pa_x) U + \frac{1}{\e^2} A_0 U = F(U),
\end{align}
with
\begin{align*}
    A(\pa_x) = \begin{pmatrix}
    0_{2 \times 2} & \nabla & 0_2 \\
    \nabla^{\rm T} & 0 & 0 \\
    0_2^{\rm T} & 0 & 0
    \end{pmatrix}, \quad
    A_0 = \begin{pmatrix}
    0_{2 \times 2} & 0_2 & 0_2 \\
    0_2^{\rm T} & 0 & 1 \\
    0_2^{\rm T} & -1 & 0
    \end{pmatrix}, \quad F(U) = -\begin{pmatrix}
    0_2 \\
    f(u) \\
    0
    \end{pmatrix} . %%都不要用tag
\end{align*}
Here $0_{2 \times 2}$ denotes zero matrix of order $2 \times 2$ and $0_2$ denotes  the zero column vector of dimension $2$. If there is no confusion from the context, we may omit the subscript $2$ or $ 2 \times 2 $ and simply use $0$. From \eqref{inital_KL}, we naturally get the initial datum of $U$
\begin{align} \label{inital_U}
    U(0,\, \cdot) = ( \e \nabla^{\rm T} \phi,\, \psi,\, \phi )^{\rm T}. 
\end{align}

\subsection{Main Results} \label{idea}

We will employ the classical WKB expansion technique from geometric optics to construct approximate solutions to \eqref{K_L_change} and then study the stability of such WKB solutions. The core idea of the WKB method involves expanding the solution as a formal power series in the small parameter $\epsilon$, where each term in the series is a trigonometric polynomial in $\theta$:
\begin{align} \label{WKB_inital}
    U_a = \sum_{n=0}^{K+2} \e^n U_n, \quad U_n = \sum_{p \in H_n} e^{i p \xth} U_{n,p}, \quad K \in \Zz_{\geq 0}, \quad H_n \subset \Zz.
\end{align}
Analogous to  \eqref{define_U}, we use the notation
\begin{align*}
    U_a = \begin{pmatrix} 
    w_a \\ 
    v_a \\ 
    u_a 
    \end{pmatrix}, \quad U_n = \begin{pmatrix} 
    w_n \\ 
    v_n \\ 
    u_n 
    \end{pmatrix}, \quad U_{n,p} = \begin{pmatrix} 
    w_{n,p} \\ 
    v_{n,p} \\ 
    u_{n,p} 
    \end{pmatrix}.
\end{align*}
In \eqref{WKB_inital}, $H_n$ denotes the $n$-th order harmonics set which will be given later. 
%The leading harmonic $H_0$ is defined as $H_0 := \{p \in \Zz : \det(i p  + A_0) = 0\}$. For the non-homogeneous case ($A_0 \neq 0$), the set $H_0$ is always finite.  Higher-order harmonics are determined by the fundamental harmonics and the nonlinearities of the system. In general, $H_n \subset H_{n+1}$ and the structure of $H_{n+1}$ depends on both $H_n$ and the nonlinearities in the equation. 
This will be elaborated in Section \ref{WKB}.

We substitute \eqref{WKB_inital} into \eqref{K_L_change} and derive the equation of order $O(\e^n)$:
\begin{align} \label{Phi_np}
    \Phi_{n,p} := \pa_t U_{n,p} - A(\pa_x) U_{n + 1,p} + (i p I_4 + A_0) U_{n + 2,p} - F(U_a)_{n,p} = 0,
\end{align}
where $n \in \mathbb{Z}_{\geq -2}$, $p \in \mathbb{Z}$ and 
\begin{align} \label{form_ff}
    F(U_a)_{n, p} := \begin{pmatrix} 
    0_2 \\ 
    -f(u_a)_{n,p} \\ 
    0 
    \end{pmatrix}, \quad f(u_a)_{n, p} := \la \sum_{n_1 + n_2 + n_3 = n} \sum_{p_1 + p_2 + p_3 = p} u_{n_1,p_1} u_{n_2,p_2} u_{n_3,p_3}.
\end{align}
For notational consistency,  we impose $U_n = 0$ for $n = -2,\, -1$. The idea to construct approximate solutions is to solve $\Phi_{n,p} = 0$ up to some non-negative order $K$, so that $U_a$ solves \eqref{K_L_change} approximately with an error of order $O(\e^{K + 1})$. We will choose $K \in \mathbb{Z}_{\rm even}$ denoting the even integers for the convenience of the statements of main results. 

We will demonstrate that WKB approximate solutions of arbitrary order can be constructed. We first consider the focusing case with $\la < 0$.
\begin{theorem} \label{f_remainder_estimate}
    Suppose $\la < 0$, $K \in \Zz_{+} \cap \Zz_{\rm even} $ and $(\phi, \psi)$ satisfies \eqref{inital_datum} with $s > 2 K + 5$. Let $T^*$ be the existence time to the Cauchy problem of the cubic Schr\"{o}dinger equation \eqref{ls_g_0}. Then there exists a WKB solution $U_a$ of the form \eqref{WKB_inital} with  $U_{n,p} \in C([0, T^*); H^{s-2n}(\Rr^2) )$  for all $n = 0,\, 1,\, 2,\, \ldots,\, K + 2$ and $p \in H_n$ given in \eqref{define_Hn}. Moreover, for all $t \in (0, T^*)$, $U_a$ satisfies
\begin{align} \label{eq_Ua} 
\begin{split}
\begin{cases}
    &\pa_t U_a - \frac{1}{\e} A(\pa_x) U_a + \frac{1}{\e^2} A_0 U_a = F(U_a) - \e^{K+1} R_\e,\\
    &U_a(0, \cdot) = U(0, \cdot) - \e^{K+2} r_\e,
\end{cases}
\end{split} 
\end{align} 
where
\begin{align*} 
    \|R_\e(t, \cdot)\|_{H^{s-2K-4}} \leq C_{\phi,\psi}(t), \quad \|r_\e\|_{H^{s-2K-2}} \leq C_{\phi,\psi}. 
\end{align*}
\end{theorem}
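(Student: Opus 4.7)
The plan is to follow the classical geometric-optics WKB construction. One substitutes the ansatz $U_a = \sum_{n=0}^{K+2}\varepsilon^n \sum_{p \in H_n} e^{ip\theta} U_{n,p}$ into the symmetric hyperbolic system \eqref{K_L_change} with $\theta = t/\varepsilon^2$, expands using $\partial_t(e^{ip\theta} f) = e^{ip\theta}(\partial_t f + i p \varepsilon^{-2} f)$, and matches like powers of $\varepsilon$. This produces exactly the cascade $\Phi_{n,p} = 0$ in \eqref{Phi_np} indexed by $n \in \mathbb{Z}_{\geq -2}$ and $p \in \mathbb{Z}$. The strategy is to solve this cascade inductively in $n$, splitting each equation into an algebraic ``polarization'' part obtained by inverting $ipI_4 + A_0$ on its range, and a ``Fredholm'' part projected onto $\ker(ipI_4 + A_0)$ which becomes the evolution equation for the free amplitudes.

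The analysis starts from the structure of $ipI_4 + A_0$. A direct computation shows this matrix is invertible for $|p|\ge 2$, while for $|p|=1$ it has a one-dimensional kernel lying in the $(v,u)$-subspace. The order $\varepsilon^{-2}$ equation thus forces $H_0 = \{-1, 1\}$ together with a polarization relation linking $v_{0,\pm 1}$ and $u_{0,\pm 1}$; set $g_0 := u_{0,1}$ and use the reality of $u$ to conclude $u_{0,-1} = \bar g_0$. The order $\varepsilon^{-1}$ equation produces the analogous relation $w_{0,\pm 1} = \pm i\nabla u_{0,\pm 1}$. Finally, projecting the $\varepsilon^0$ equation at $p=1$ onto $\ker(iI_4 + A_0)$ gives, after elementary linear algebra, exactly the cubic Schr\"odinger equation \eqref{ls_g_0} for $g_0$; Proposition \ref{local_wp} then furnishes $g_0 \in C([0, T^*), H^s)$.

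For each higher $n \ge 1$, one solves $\Phi_{n-2,p} = 0$ to obtain $U_{n,p}$. When $|p| \ne 1$, the matrix $ipI_4 + A_0$ is invertible on the relevant subspace, so $U_{n,p}$ is obtained from already-constructed quantities by a purely algebraic relation. When $p = \pm 1$, the kernel component of $U_{n,p}$ remains free, while the Fredholm compatibility condition at the next order forces it to satisfy a linear Schr\"odinger-type equation driven by sources built from $U_0,\dots,U_{n-1}$ and by the linearization of the cubic nonlinearity around $g_0$. Each such linear Schr\"odinger problem is well-posed on $[0,T^*)$ by standard energy methods, and propagates $H^{s-2n}$ regularity because $A(\partial_x)$ consumes one derivative at every algebraic-inversion step that is subsequently used as a source. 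The harmonics sets $H_n$ so defined are closed under the cubic convolution in \eqref{form_ff}, matching the definition in \eqref{define_Hn}.

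The remainder $R_\varepsilon$ arises from the truncation: substituting $U_a$ back into \eqref{K_L_change} leaves exactly the terms $\Phi_{n,p}$ at the upper orders $n = K+1, K+2$, together with the residue from those cubic products in $F(U_a)$ whose total order exceeds $K+2$. A tame Moser estimate in $H^{s-2K-4}$, combined with the bounds on $U_{0,p},\dots,U_{K+2,p}$ already obtained, yields the time-dependent bound $\|R_\varepsilon\|_{H^{s-2K-4}} \le C_{\phi,\psi}(t)$. The initial error $r_\varepsilon$ is read off from the difference between $U_a(0,\cdot)$ and the datum \eqref{inital_U}, arising from the polarization corrections of order $\varepsilon^{K+2}$ built into the algebraic relations, and is controlled in $H^{s-2K-2}$ by the top-amplitude bound. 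The main obstacle I anticipate is the combinatorial and regularity bookkeeping: one must ensure simultaneously that (i) the harmonic sets $H_n$ are indeed closed under the cubic convolution without generating modes that violate the inductive hypothesis, (ii) each Fredholm-induced linear Schr\"odinger equation has its source in exactly the regularity class $H^{s-2n}$ dictated by the preceding step, and (iii) the derivative loss of $2n$ levels for $U_n$ tracks consistently through all $n \le K+2$, which is ultimately what forces the threshold $s > 2K+5$.
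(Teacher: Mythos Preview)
Your proposal follows essentially the paper's WKB-cascade construction and would succeed. Two points to sharpen. First, a minor indexing slip: the order-$\varepsilon^{-1}$ equation determines $w_{1,\pm1}=\nabla g_0$, not $w_{0,\pm1}$, since the kernel of $iI_4+A_0$ lies entirely in the $(v,u)$-block and the $\varepsilon^{-2}$ equation already forces $w_{0,\pm1}=0$. Second, and more substantively, the $O(\varepsilon^{K+2})$ size of the initial error $r_\varepsilon$ is not automatic from ``polarization corrections built into the algebraic relations.'' The paper achieves it by a deliberate choice of the free initial data $g_n(0,\cdot)$ for each even $n\le K$ (Proposition~\ref{initalUn}), engineered so that $U_n(0,\cdot)=0$ for all $2\le n\le K+1$, which leaves only $U_{K+2}(0,\cdot)$ as the residue. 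Without this well-preparation step the initial discrepancy would be $O(\varepsilon^{2})$ coming from $U_2(0,\cdot)$, so this is the one place where your outline needs an explicit mechanism rather than a passing remark.
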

While for the defocusing case $\la>0$, we can construct a global WKB solution.
\begin{theorem} \label{d_remainder_estimate}
    Suppose $\la > 0$, $K \in \Zz_{+} \cap \Zz_{\rm even} $ and $(\phi, \psi)$ satisfies \eqref{inital_datum}-- \eqref{weighted} with $s > 2 K + 5$. Then there exists a WKB solution $U_a$ of the form \eqref{WKB_inital} with $U_{n,p} \in C([0, \infty); H^{s-2n}(\mathbb{R}^2))$  for all $n = 0,\, 1,\, 2,\, \ldots,\, K + 2$ and $p \in H_n$ given in \eqref{define_Hn}. Moreover, for all $t \in (0, \infty)$, $U_a$ satisfies \eqref{eq_Ua} with 
    \begin{align*} 
        \|R_\e(t, \cdot)\|_{H^{s-2K-4}} \leq C_{\phi,\psi} \left( 1  + (1 + t)^{\frac{K}{2} \hat{N}_{\phi,\psi} + \frac{K-2}{2}} \right),  
        \quad \|r_\e\|_{H^{s-2K-2}} \leq C_{\phi,\psi} .
    \end{align*}
    In particular, $U_{n,p}$ satisfies all the properties in Corollary \ref{existence}. The constant $\hat{N}_{\phi,\psi}$ is defined in \eqref{definehatN}. 
\end{theorem}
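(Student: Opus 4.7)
The plan is to construct the WKB approximate solution $U_a$ order by order through the formal cascade generated by substituting the ansatz \eqref{WKB_inital} into the symmetric system \eqref{K_L_change}, and then to propagate each profile globally in time using the decay estimate of Proposition~\ref{global_wp}. Matching powers of $\e$ and harmonics $e^{ip\xth}$ yields the cascade $\Phi_{n,p} = 0$ from \eqref{Phi_np} for $n = -2, -1, 0, \ldots, K$. The matrix $ipI_4 + A_0$ is invertible precisely when $|p|\neq 1$, so the $O(\e^{-2})$ equation forces $U_{0,p}\equiv 0$ unless $p\in\{\pm 1\}$; this fixes $H_0=\{\pm 1\}$ and polarizes $U_{0,\pm 1}$ onto the one-dimensional kernel of $\pm iI_4 + A_0$. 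The $O(\e^{-1})$ and $O(\e^{0})$ relations then determine the non-polarized components in terms of $\Grad u_{0,1}$, and the solvability condition projected onto the kernel yields the cubic Schr\"odinger equation \eqref{ls_g_0} for $g_0 := u_{0,1}$; reality of $U$ gives $U_{0,-1}=\overline{U_{0,1}}$, so one only needs to solve for the $p\geq 0$ harmonics.

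Proceeding by induction on $n$, each equation $\Phi_{n,p}=0$ splits harmonic by harmonic. For non-resonant harmonics $|p|\neq 1$, $U_{n,p}$ is determined algebraically from lower-order profiles by inversion of $ipI_4+A_0$. For the resonant harmonics $p=\pm 1$, projecting onto and off the kernel first determines the non-polarized components, while the polarized component solves a linear Schr\"odinger-type equation of the form
\begin{align*}
    2i\pa_t g_{n,1} - \Dl g_{n,1} + 3\la\bigl(2|g_0|^2 g_{n,1} + g_0^2\, \overline{g_{n,1}}\bigr) = S_n,
\end{align*}
where the source $S_n$ is a polynomial expression in $g_0$, $\overline{g_0}$, lower-order profiles $\{U_{m,q}\}_{m<n}$, and their spatial derivatives. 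The harmonics sets $H_n$ consist of the odd integers in $[-(n+1),n+1]$, as dictated by the cubic structure of $F(u_a)$ in \eqref{form_ff}.

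The global-in-time propagation of each profile for $\la>0$ is the crux. The base case is Proposition~\ref{global_wp}, which supplies the uniform bound $\|g_0(t)\|_{H^s}\leq C_{\phi,\psi}$ and the decisive decay $\|g_0(t)\|_{L^\infty}\leq C_{\phi,\psi}(1+t)^{-1}$. In the inductive step, a direct $H^{s-2n}$ energy estimate on the linear Schr\"odinger equation for $g_{n,1}$ produces a Gronwall coefficient $\|g_0(t)\|_{L^\infty}^2$ that is \emph{integrable in time}, which prevents exponential blow-up and yields at most polynomial growth. Together with the polynomial bounds inherited from $S_n$, a standard Duhamel argument in $H^{s-2n}$ reproduces the exponents in \eqref{defo_3_estimate2}; tracking how many powers of $(1+t)$ each cubic interaction contributes is exactly what encodes $\hat{N}_{\phi,\psi}$ via \eqref{definehatN}. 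Non-resonant profiles inherit the same growth via algebraic inversion, with a loss of two derivatives per step due to the $A(\pa_x)$ factor.

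Finally, the residual $R_\e$ in \eqref{eq_Ua} consists of the unsolved terms $\Phi_{n,p}$ at orders $n=K-1,K$, which involve $A(\pa_x)U_{K+2,p}$ and the cubic pieces of $F(u_a)$ of order exceeding $K$; bounding these in $H^{s-2K-4}$ via the profile estimates just established gives exactly the claimed growth $(1+t)^{\frac{K}{2}\hat{N}_{\phi,\psi}+\frac{K-2}{2}}$, while $r_\e$ is the $t=0$ truncation error, uniformly bounded in $H^{s-2K-2}$ by construction. The main obstacle is the inductive global-in-time estimate with sharp polynomial growth: the resonant equation couples $g_{n,1}$ to its conjugate $\overline{g_{n,1}}$ through $g_0^2\,\overline{g_{n,1}}$, and each step compounds the time-powers inherited from $S_n$. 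Verifying that the $L^\infty$ decay of $g_0$ is strong enough to close each Gronwall argument without generating exponential factors, and carefully bookkeeping the accumulated exponents across the tower of profiles, is the delicate part and precisely what determines the constants $\hat{N}_{\phi,\psi}$ and $\tilde{N}_{\phi,\psi}$ appearing in the statement.
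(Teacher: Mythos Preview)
Your overall architecture --- WKB cascade, resonant/non-resonant splitting, induction on orders, Duhamel plus Gronwall using the $L^\infty$ decay of $g_0$ --- is exactly the paper's. But the key mechanism you describe is wrong, and this matters because it is precisely what produces the exponent $\hat{N}_{\phi,\psi}$.

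You claim the Gronwall coefficient in the $H^{s-2n}$ estimate for $g_n$ is $\|g_0(t)\|_{L^\infty}^2$, which is integrable in two dimensions. That is only true at the $L^2$ level. In $H^{s-2n}$ with $s-2n>1$, the Kato--Ponce product estimate forces one factor of $\|g_0\|_{H^{s-2n}}$ onto the potential term: one gets
\[
\||g_0|^2 g_n\|_{H^{s-2n}} \;\lesssim\; \|g_0\|_{H^{s-2n}}\,\|g_0\|_{L^\infty}\,\|g_n\|_{H^{s-2n}},
\]
so the Gronwall coefficient is $C_{\phi,\psi}(1+t)^{-1}$, which is \emph{not} integrable. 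The Gronwall factor is therefore $\exp\bigl(\hat{N}_{\phi,\psi}\int_0^t (1+t')^{-1}\,dt'\bigr) = (1+t)^{\hat{N}_{\phi,\psi}}$, and this is exactly the origin of the definition \eqref{definehatN} and of the polynomial exponents $\frac{n}{2}\hat{N}_{\phi,\psi}+\frac{n-2}{2}$ that propagate through the hierarchy. If the coefficient were integrable as you assert, Gronwall would give a uniformly bounded multiplicative factor and the growth would come only from integrating the sources; $\hat{N}_{\phi,\psi}$ would then have no reason to take the specific form \eqref{definehatN}. In short, the polynomial growth is generated by the borderline non-integrability of $(1+t)^{-1}$, not prevented by integrability of $(1+t)^{-2}$.

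A minor point: the residual $R_\e$ is not built from $\Phi_{K-1,p}$ and $\Phi_{K,p}$ (those are solved exactly), but from the leftover pieces $\pa_t U_{K+1}$, $\pa_t U_{K+2}$, $A(\pa_x)U_{K+2}$, and the high-order nonlinear terms $F(U_a)_n$ for $n\geq K+1$.
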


We will show that the WKB solutions given in Theorems \ref{f_remainder_estimate} and \ref{d_remainder_estimate} are stable. For the focusing case, we have:
\begin{theorem} \label{f_ex}
    Suppose $\la<0$, $K \in \Zz_{+} \cap \mathbb{Z}_{\rm even}$ and $(\phi, \psi)$ satisfies \eqref{inital_datum} with $s > 2K + 5$. Let $T^*$ be the existence time to the Cauchy problem of the cubic Schr\"{o}dinger equation \eqref{ls_g_0}. Then the Cauchy problem of the cubic Klein-Gordon equation \eqref{K_L_change}--\eqref{WKB_inital} admits a unique solution $U \in C([0, T^*_\e)$;$ H^{s-2K-4}(\Rr^2))$ where the existence time satisfies
    \begin{align*}
        \liminf_{\e \to 0} T^*_\e \geq T^*.
    \end{align*} %%\geq \geq 是一样的,可改可不改 包括\mathbb{R} 和 \Rr 一样的, 改不改没啥问题
    Moreover, the WKB solution $U_a$ given in Theorem \ref{f_remainder_estimate} is stable in the following sense:
    \begin{align*}
        \| (U - U_a)(t, \cdot) \|_{H^{s-2K-4}(\Rr^2)} \leq C_{\phi,\psi}(t) \e^{K+1}, \quad t < \min \{ T^*, T_{\e}^*\}.
    \end{align*} %% 里面的d也不要出现
\end{theorem}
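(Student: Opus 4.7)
The plan is to set $V := U - U_a$ and derive the error equation by subtracting \eqref{eq_Ua} from \eqref{K_L_change}:
\begin{equation*}
    \partial_t V - \frac{1}{\epsilon} A(\partial_x) V + \frac{1}{\epsilon^2} A_0 V = F(U) - F(U_a) + \epsilon^{K+1} R_\epsilon, \qquad V(0, \cdot) = \epsilon^{K+2} r_\epsilon,
\end{equation*}
and to run an $H^m$ energy estimate with $m := s - 2K - 4 > 1$ (so $H^m \hookrightarrow L^\infty$ in two dimensions). The crucial structural observation is that the singular terms contribute nothing to energy identities: writing $A(\partial_x) = \sum_j A_j \partial_{x_j}$, each $A_j$ is a constant symmetric matrix, so integration by parts gives $\int \partial^\alpha V \cdot A(\partial_x) \partial^\alpha V \, dx = 0$; and $A_0^{\mathrm{T}} = -A_0$, so $\int \partial^\alpha V \cdot A_0 \partial^\alpha V \, dx = 0$. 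Thus both the $\epsilon^{-1}$ and the $\epsilon^{-2}$ contributions drop out, and all constants produced by the energy method are $\epsilon$-uniform.

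For the nonlinear term, I factor $f(u) - f(u_a) = \lambda (u - u_a)\bigl(u^2 + u u_a + u_a^2\bigr)$ and apply standard Moser/tame product estimates in $H^m$ to get
\begin{equation*}
    \|F(U) - F(U_a)\|_{H^m} \leq C \bigl(\|u\|_{H^m}^2 + \|u_a\|_{H^m}^2\bigr) \|V\|_{H^m}.
\end{equation*}
Theorem \ref{f_remainder_estimate} provides $\|u_a(t, \cdot)\|_{H^m} + \|R_\epsilon(t, \cdot)\|_{H^m} \leq C_{\phi,\psi}(t)$ on $[0, T^*)$, so using $\|u\|_{H^m} \leq \|u_a\|_{H^m} + \|V\|_{H^m}$ together with the energy identity yields a differential inequality of the form
\begin{equation*}
    \frac{d}{dt} \|V(t, \cdot)\|_{H^m}^2 \leq C_{\phi,\psi}(t) \bigl(1 + \|V\|_{H^m}^2\bigr) \|V\|_{H^m}^2 + C_{\phi,\psi}(t) \epsilon^{K+1} \|V\|_{H^m}.
\end{equation*}

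Next I close the argument by a bootstrap/continuation scheme. Fix any $T < T^*$. Local existence for $U$ in $H^m$ (equivalently for $V$) on some interval $[0, T_\epsilon^*)$ follows from Picard iteration applied to $V$: the linear singular part generates a unitary group on $H^m$ (by the skew-adjointness noted above), so all iteration constants are $\epsilon$-independent provided $U_a$ is a smooth known coefficient, which it is by Theorem \ref{f_remainder_estimate}. Under the bootstrap assumption $\|V(t, \cdot)\|_{H^m} \leq 1$ on a subinterval $[0, \tau]$ with $\tau \leq \min\{T, T_\epsilon^*\}$, the above inequality is linear in $\|V\|_{H^m}^2$, and Gronwall gives $\|V(t, \cdot)\|_{H^m} \leq C_{\phi,\psi}(T) \epsilon^{K+1}$, since $\|V(0)\|_{H^m} \lesssim \epsilon^{K+2}$. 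For $\epsilon$ small this is strictly less than $1$, improving the bootstrap, and the standard continuation criterion for symmetric hyperbolic systems lets $U$ be extended past $T$. This gives both $\liminf_{\epsilon \to 0} T_\epsilon^* \geq T^*$ and the error bound on $[0, \min\{T^*, T_\epsilon^*\})$.

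The main obstacle I anticipate is calibrating the local existence theory for $V$ so that $T_\epsilon^*$ does not collapse with $\epsilon \to 0$: the coefficients $\epsilon^{-1}$ and $\epsilon^{-2}$ are formally singular, and a naive Picard scheme applied directly to the original equation for $U$ would yield a vanishing existence interval. The remedy, which must be executed carefully, is to work throughout with the difference $V$, to absorb the singular part into a unitary group on $H^m$ using the symmetric/antisymmetric structure of $(A(\partial_x), A_0)$ before any iteration or energy argument is invoked, and only then to close the nonlinear estimate via Moser and Gronwall with $\epsilon$-uniform constants.
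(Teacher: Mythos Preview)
Your proposal is correct and follows essentially the same route as the paper. The only cosmetic differences are that the paper rescales the difference to $\dot U := \e^{-(K+1)}(U-U_a)$ before running the argument, and that it phrases the linear estimate via Duhamel's formula with the unitary group $S(t)=\exp\bigl((\tfrac{1}{\e}A(\pa_x)-\tfrac{1}{\e^2}A_0)t\bigr)$ rather than via a direct $H^m$ energy identity; both choices exploit exactly the structural observation you highlight (symmetric $A(\pa_x)$, skew-symmetric $A_0$), and the bootstrap/Gronwall/continuation closure is the same.
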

For the defocusing case, we can show the stability over long time. For data with high regularity, we have the following result:
\begin{theorem} \label{d_ex_h}
    Suppose $\la > 0$, $K \in \Zz_+ \cap \Zz_{\rm even} $ and $(\phi, \psi)$ satisfies \eqref{inital_datum} and \eqref{weighted} with $s > 2 K + 5$. Then the Cauchy problem of the cubic Klein-Gordon equation \eqref{K_L_change}-\eqref{WKB_inital} admits a unique solution $U \in C([0, T_\e]; H^{s-2K-4}(\mathbb{R}^2))$ with
\begin{align*}
T_\e := 
\frac{T_0}{\e^{\a}}, \quad \alpha := \frac{2}{N_{\phi,\psi} + 1},
\end{align*}
where $T_0>0$ is independent of $\e$. Moreover, the WKB solution $U_a$ given in Theorem \ref{d_remainder_estimate} is stable: 
\begin{align*}
    \| (U - U_a)(t, \cdot) \|_{H^{s-2K-4}(\mathbb{R}^2)} \leq C_{\phi,\psi} (1 + t)^{\frac{K}{2} N_{\phi,\psi} + \frac{K}{2}}  \e^{K+1}, \quad \forall\,t \leq T_0 \e^{-\alpha}. 
\end{align*}
The constant $N_{\phi,\psi} = \max\{\hat{N}_{\phi, \psi}, \tilde{N}_{\phi,\psi}\}$ where $\hat{N}_{\phi,\psi}$ and $\tilde{N}_{\phi,\psi}$ are defined in \eqref{definehatN} and \eqref{define_tildeN} respectively. 
\end{theorem}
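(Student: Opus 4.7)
The plan is to set $V := U - U_a$ as the error, derive a perturbed symmetric hyperbolic system for it from \eqref{K_L_change} and \eqref{eq_Ua}, run an $H^\sigma$ energy estimate exploiting the symmetry of $A(\pa_x)$ and skew-symmetry of $A_0$, and close the long-time bound via a continuity (bootstrap) argument driven by the dispersive decay of $g_0$ from Proposition~\ref{global_wp}. Writing $\sigma := s - 2K - 4 > 1$, local well-posedness of \eqref{K_L_change}--\eqref{inital_U} in $H^\sigma$ is classical for quasilinear symmetric hyperbolic systems; this yields a unique maximal solution $U \in C([0, T^*_\e); H^\sigma(\R^2))$ and the error equation
\[
\pa_t V - \frac{1}{\e} A(\pa_x) V + \frac{1}{\e^2} A_0 V = F(U) - F(U_a) + \e^{K+1} R_\e, \qquad V(0,\cdot) = \e^{K+2} r_\e,
\]
with $R_\e$ and $r_\e$ controlled by Theorem~\ref{d_remainder_estimate}. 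The goal is to propagate the quantitative bound on $\|V\|_{H^\sigma}$ up to $T_\e$; the standard blow-up criterion then forces $T^*_\e > T_\e$.

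Applying $\pa_x^\beta$ with $|\beta| \leq \sigma$ and pairing with $\pa_x^\beta V$ in $L^2$, the singular-in-$\e$ terms vanish since $A(\pa_x)$ is first-order symmetric and $A_0$ is skew-symmetric, leaving
\[
\frac{1}{2}\frac{d}{dt}\|V\|_{H^\sigma}^2 \leq \|F(U) - F(U_a)\|_{H^\sigma}\|V\|_{H^\sigma} + \e^{K+1}\|R_\e\|_{H^\sigma}\|V\|_{H^\sigma}.
\]
Expanding $u^3 - u_a^3 = 3 u_a^2 (u - u_a) + 3 u_a(u - u_a)^2 + (u - u_a)^3$ in the only nontrivial component of $F(U) - F(U_a)$ and using tame Moser estimates together with the 2D Sobolev embedding $H^\sigma \hookrightarrow L^\infty$, I control $\|F(U) - F(U_a)\|_{H^\sigma}$ by a coefficient polynomial in $\|u_a\|_{L^\infty}$, $\|u_a\|_{H^\sigma}$, and $\|V\|_{H^\sigma}$ times $\|V\|_{H^\sigma}$. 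The bootstrap is then set up by assuming $\|V(t)\|_{H^\sigma} \leq C_\ast \e^{K+1}(1 + t)^{\frac{K}{2} N_{\phi,\psi} + \frac{K}{2}}$ on some $[0, T] \subset [0, T_\e]$ and aiming to halve $C_\ast$. Using $u_0 = e^{i\xth} g_0 + e^{-i\xth} \bar g_0$ together with the $L^\infty$ decay $\|g_0(t)\|_{L^\infty} \lesssim (1 + t)^{-1}$ from Proposition~\ref{global_wp}, and the growth estimates for the correctors from Theorem~\ref{d_remainder_estimate}, elementary arithmetic with $\alpha = 2/(N_{\phi,\psi} + 1)$ and $N_{\phi,\psi} \geq \hat N_{\phi,\psi}$ shows that $\e^n(1 + t)^{\frac{n}{2}\hat N_{\phi,\psi} + \frac{n-2}{2}} \lesssim 1$ on $[0, T_\e]$ for every $2 \leq n \leq K + 2$; in particular $\|u_a\|_{H^\sigma} \lesssim 1$, and the time-integrated nonlinear coefficient is controlled.

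Feeding these facts into Gronwall and integrating the remainder bound $\|R_\e\|_{H^\sigma} \lesssim 1 + (1 + t)^{\frac{K}{2}\hat N_{\phi,\psi} + \frac{K-2}{2}}$ produces a source contribution $\e^{K+1}(1 + t)^{\frac{K}{2}\hat N_{\phi,\psi} + \frac{K}{2}}$ and a Gronwall prefactor polynomial in $t$; the constant $\tilde N_{\phi,\psi}$ appearing in $N_{\phi,\psi} = \max\{\hat N_{\phi,\psi}, \tilde N_{\phi,\psi}\}$ is defined in \eqref{define_tildeN} precisely to absorb the exponent of this prefactor. For $\e$ sufficiently small this halves the bootstrap constant on $[0, T_\e]$ and, combined with the continuation criterion, forces $T^*_\e > T_\e$. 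The main technical obstacle is the borderline nature of the dispersive decay: the $L^\infty$ rate $(1 + t)^{-1}$ of $g_0$ is only just fast enough to keep the relevant time integrals polynomially controlled, while the WKB correctors grow polynomially with prefactor $\e^n$, and the precise long-time window $T_\e = T_0 \e^{-\alpha}$ with $\alpha = 2/(N_{\phi,\psi}+1)$ is exactly the largest on which the competing factors $\e^n (1+t)^{M_n}$ can all be kept of order one and the Gronwall bound retains its polynomial form.
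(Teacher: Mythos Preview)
Your plan is essentially the paper's own argument. The paper works with the normalized difference $\dot U := \e^{-(K+1)}(U-U_a)$ and uses the Duhamel representation via the unitary group $S(t) = \exp\big((\e^{-1}A(\pa_x) - \e^{-2}A_0)t\big)$ rather than differentiating the $H^\sigma$ energy, but this is cosmetic: unitarity of $S(t)$ and the symmetry/skew-symmetry cancellation you invoke are the same mechanism. The tame product estimate, the decomposition of $u^3 - u_a^3$, the $L^\infty$ decay of $g_0$, the observation that each corrector contribution $\e^n(1+t)^{\frac{n}{2}\hat N_{\phi,\psi}+\frac{n-2}{2}}$ stays $O(\e^\alpha)$ on $[0,T_\e]$, and the continuation/bootstrap closure are all exactly as in the paper.

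Two points where your sketch is looser than the paper and worth tightening. First, to land on the \emph{stated} exponent $\frac{K}{2}(N_{\phi,\psi}+1)$ you must use the sharp integral form of Gronwall, not the crude ``increasing source times $\exp(\int b)$'' version: the latter would produce $(1+t)^{\frac{K}{2}(\hat N_{\phi,\psi}+1)+\tilde N_{\phi,\psi}}$, which is generically strictly larger. The paper carries out the integral-form computation explicitly and the exponents collapse to $\frac{K}{2}(N_{\phi,\psi}+1)$ after replacing both $\hat N$ and $\tilde N$ by their maximum. Second, the bootstrap does not close by taking $\e$ small alone; the $\e^\alpha$-coefficient in the Gronwall kernel, once integrated over $[0,T_0\e^{-\alpha}]$, contributes a factor $e^{T_0 \hat C_{\phi,\psi}}$, and it is the smallness of $T_0$ (independent of $\e$) that keeps this bounded and yields the strict improvement needed for continuation. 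This is why the theorem only asserts existence of \emph{some} $T_0>0$ rather than an arbitrary one.
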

For the case with limited regularity ($K = 0$), we show the following result.
\begin{theorem} \label{d_ex_l}
    Suppose $\la > 0$ and $(\phi, \psi)$ satisfies \eqref{inital_datum} --\eqref{weighted}  with $s >  5$. Then the Cauchy problem of the cubic Klein-Gordon equation \eqref{K_L_change}-\eqref{WKB_inital} admits a unique solution $U \in C([0, T_\e]; H^{s-4}(\mathbb{R}^2))$ with
\begin{align*}
    \hat{T}_\e := 
    \frac{T_1}{\e^{\frac{\a}{2}}}, \quad \alpha := \min \{1, \frac{2}{\tilde{N}_{\phi,\psi} + 1}\} ,
\end{align*}
where $T_1>0$ is independent of $\e$. Moreover, the WKB solution $U_a$ given in Theorem \ref{d_remainder_estimate} is stable: 
\begin{align*}
    \| (U - U_a)(t, \cdot) \|_{H^{s-4}(\mathbb{R}^2)} \leq C_{\phi,\psi} \left( (1 + t) + (1 + t)^{\tilde{N}_{\phi,\psi}}  \right) \e, \quad \forall\,  t \leq \frac{T_1}{\e^{\frac{\a}{2}}}.
\end{align*}
The constant $\tilde{N}_{\phi,\psi}$ is defined in  \eqref{define_tildeN} with $K = 0$. 
\end{theorem}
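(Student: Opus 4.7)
The plan is to set $V := U - U_a$ and subtract the equation \eqref{eq_Ua} for the WKB solution (provided by Theorem \ref{d_remainder_estimate} with $K=0$) from \eqref{K_L_change}. This produces the symmetric hyperbolic system
\begin{align*}
    \pa_t V - \tfrac{1}{\e} A(\pa_x) V + \tfrac{1}{\e^2} A_0 V = F(U) - F(U_a) + \e R_\e, \qquad V(0,\cdot) = -\e^2 r_\e.
\end{align*}
Because $A(\pa_x)$ is skew-symmetric as a differential operator and $A_0$ is a skew-symmetric matrix, the singular coefficients $\e^{-1}$ and $\e^{-2}$ drop out of the $H^{s-4}$ energy identity after integration by parts and commutation with derivatives. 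One is left with
\begin{align*}
    \tfrac{d}{dt}\|V\|_{H^{s-4}}^2 \lesssim \|F(U)-F(U_a)\|_{H^{s-4}}\|V\|_{H^{s-4}} + \e\, \|R_\e\|_{H^{s-4}}\|V\|_{H^{s-4}},
\end{align*}
and Theorem \ref{d_remainder_estimate} supplies $\|R_\e\|_{H^{s-4}} \leq C_{\phi,\psi}$ uniformly in $t$ when $K=0$.

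For the nonlinear difference I would expand $u^3 - u_a^3 = 3 u_a^2 v + 3 u_a v^2 + v^3$, where $v$ is the fourth component of $V$. The crucial input is the $L^\infty$ dispersive decay of the leading profile: Proposition \ref{global_wp} together with Theorem \ref{d_remainder_estimate} gives $\|u_a(t,\cdot)\|_{L^\infty} \leq C_{\phi,\psi}(1+t)^{-1}$ and $\|u_a(t,\cdot)\|_{H^{s-4}} \leq C_{\phi,\psi}$. Since $s>5$ implies $H^{s-4}(\R^2) \hookrightarrow L^\infty$, a Moser product estimate then yields
\begin{align*}
    \|u_a^2 v\|_{H^{s-4}} \leq C\bigl(\|u_a\|_{L^\infty}^2\|V\|_{H^{s-4}} + \|u_a\|_{L^\infty}\|u_a\|_{H^{s-4}}\|V\|_{H^{s-4}}\bigr) \leq C_{\phi,\psi}(1+t)^{-1}\|V\|_{H^{s-4}},
\end{align*}
while the genuinely nonlinear pieces $u_a v^2$ and $v^3$ contribute only higher powers of $\|V\|_{H^{s-4}}$ controllable by a bootstrap assumption.

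Putting these pieces together produces an inequality of the form
\begin{align*}
    \tfrac{d}{dt}\|V\|_{H^{s-4}} \leq C_{\phi,\psi}(1+t)^{-1}\|V\|_{H^{s-4}} + C_{\phi,\psi}\e + C_{\phi,\psi}\bigl(\|V\|_{H^{s-4}}^2 + \|V\|_{H^{s-4}}^3\bigr).
\end{align*}
Gronwall applied to the linear part gives a multiplier $\exp\bigl(\int_0^t C_{\phi,\psi}(1+s)^{-1}\,ds\bigr) = (1+t)^{C_{\phi,\psi}}$, and the constant in this exponent is precisely what defines $\tilde N_{\phi,\psi}$ via \eqref{define_tildeN} for $K=0$. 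Integrating the $O(\e)$ source against this multiplier yields the linear contribution $(1+t)\e$ (from the bulk of the integration when $C_{\phi,\psi}\leq 1$) together with $(1+t)^{\tilde N_{\phi,\psi}}\e$ (from the amplification factor when $C_{\phi,\psi}>1$), matching the target bound. To go from a differential inequality to a global-in-time (on $[0,T_\e]$) estimate I would run a continuity argument: assuming $\|V(t)\|_{H^{s-4}} \leq \delta$ on a maximal subinterval, the energy estimate gives $\|V(t)\|_{H^{s-4}} \leq C_{\phi,\psi}((1+t) + (1+t)^{\tilde N_{\phi,\psi}})\e$, and the choice $T_\e = T_1\e^{-\a/2}$ with $\a = \min\{1, 2/(\tilde N_{\phi,\psi}+1)\}$ makes the right-hand side $o(1)$, allowing the bootstrap to close and simultaneously extending the local solution of \eqref{K_L_change} to all of $[0,T_\e]$.

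The main obstacle is the critical two-dimensional decay rate: $\|u_a\|_{L^\infty}$ falls off only like $(1+t)^{-1}$, which is exactly the borderline case where $\int_0^t (1+s)^{-1}\,ds$ diverges logarithmically. In sharp contrast to the three-dimensional analysis of \cite{BLZ24}, where the $(1+t)^{-3/2}$ decay is integrable and yields a bounded Gronwall factor, here the Gronwall multiplier grows polynomially as $(1+t)^{\tilde N_{\phi,\psi}}$, which is both the source of the slower convergence rate and the reason the admissible time scale shrinks to $T_\e = T_1\e^{-\a/2}$ instead of $\e^{-1}$. Carefully tracking the dependence of $\tilde N_{\phi,\psi}$ on $\|g_0\|_{L^\infty}$-type norms of the Schrödinger profile (through the coefficient $3|\la|\|u_a\|_{L^\infty}^2(1+t)^2$ after absorbing the time weight) is the key quantitative step that dictates the exponent in the final estimate.
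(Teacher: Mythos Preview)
Your proposal is correct and follows essentially the same approach as the paper: the paper also subtracts the approximate equation from \eqref{K_L_change} (working with the rescaled variable $\dot U=(U-U_a)/\e$ rather than $V$, which is cosmetic), applies Duhamel/energy estimates together with the product inequality \eqref{classical} and the $(1+t)^{-1}$ decay of $u_a$, and closes by Gronwall plus a bootstrap argument, choosing $T_1$ small to absorb the nonlinear terms exactly as in the proof of Theorem~\ref{d_ex_h}. Your identification of the borderline $(1+t)^{-1}$ decay as the source of the polynomial Gronwall factor $(1+t)^{\tilde N_{\phi,\psi}}$ is precisely the mechanism the paper exploits.
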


The forthcoming Section \ref{WKB} is devoted to the proofs of Theorems \ref{f_remainder_estimate} and \ref{d_remainder_estimate}.  We will finally prove the stability results in Theorems \ref{f_ex}, \ref{d_ex_h} and \ref{d_ex_l} in Section \ref{sec-stability}. Our main results in Theorems \ref{defo_1}, \ref{defo_2}, \ref{defo_3}, \ref{fo_1} can be seen as corollaries of Theorems \ref{f_ex}, \ref{d_ex_h} and \ref{d_ex_l}.

% \cp{remark}

\section{WKB expansion and approximate solutions} \label{WKB}
In this section, we use WKB expansion introduced in Section \ref{reformulation} to construct approximate solutions. 
\subsection{$O(\epsilon^{-2})$}
As shown in Section \ref{idea}, after substituting the formal expansion \eqref{WKB_inital} into \eqref{K_L_change}, the equation containing all terms of order $O(\e^{-2}$) is $\Phi_{-2,p} = 0$:
\begin{align} \label{eq_phi-2p}
    L_p U_{0,p} = 0, \quad \forall\, p \in \mathbb{Z}.
\end{align}
%This is equivalent to
%\begin{align} \label{PipU0p}
%    \Pi_p U_{0,p} = U_{0,p}, \quad \text{for any } p \in \mathbb{Z}.
%\end{align}
We denote $L_p = ipI_4 + A_0$. % $\Pi_p$ in \eqref{PipU0p} is the orthogonal projection onto the kernel of $L_p$, such that
%\begin{align*} 
 %   L_p \Pi_p = 0,  \quad \text{for any } p \in \Zz. 
%\end{align*}
If $L_p$ is invertible, let $L_p^{-1}$ be the inverse of $L_p$. %Otherwise, let $L_p^{-1}$ be the partial inverse of $L_p$的, such that
%\begin{align*}
   % L_p^{-1} \Pi_p = 0, \quad L_p^{-1} L_p = L_p L_p^{-1} = I_4 - \Pi_p,  \quad \text{for any } p \in \mathbb{Z}.
%\end{align*}
Since $L_p$ is invertible for $|p| \geq 2$, we can directly get the form of $L_p^{-1}$
\begin{align} \label{Lp-1form}
    L_p^{-1} = 
    \begin{pmatrix}
      -\frac{i}{p}I_2 & 0 & 0 \\
      0 & \frac{ip}{1-p^2} & \frac{-1}{1-p^2} \\
      0 & \frac{1}{1-p^2} & \frac{i p}{1-p^2}
    \end{pmatrix}, \quad \forall\, |p| \geq 2.
\end{align}
Then we can deduce from \eqref{eq_phi-2p} that
\begin{align} \label{form_U0p}
U_{0,p} = 0, \quad \forall\,  |p| \geq 2 .
\end{align}
We only need to consider $p = -1,\, 0,\, 1$. By calculation,
\begin{align} \label{kerL}
\begin{split}
    &\ker L_0 = (w, 0, 0), \quad  \forall\, w \in \mathbb{R}^2,  \\
    &\ker L_1 = \text{span}\{e^+\}, \quad \ker L_{-1} = \text{span}\{e^-\}.
\end{split}
\end{align}
The conjugate couple $e^+$ and $e^-$ are defined as
\begin{align*} 
    e^{\pm} := (0,\, \pm i,\, 1)^{\rm T}.
\end{align*}
%As a result, the orthogonal projections $\Pi_{\pm 1}$ are defined as
%\begin{align} \label{op} %%特别是forall后面都是需要一个\, 的, 不然就是间隔太近了, 不好看 排版的最终目的就是美观, 一致性.
  % \Pi_{\pm 1} V = \frac{1}{|e^{\pm}|^2} (V, e^{\pm}) e^{\pm} = \frac{1}{2} (V, e^{\pm}) e^{\pm}, \quad \forall \, V \in \mathbb{R}^{4},
%\end{align}
%Here $(U, V)$ denotes the inner product in $\mathbb{C}^{4}$, $(U, V) = U \cdot \bar{V}$. 
This implies
\begin{align} \label{form_U01}
\begin{split}
        &U_{0,1} = g_0 e^+, \quad  U_{0,-1}  = G_0 e^-,  \quad g_0 \text{ and } G_0 \text{ are scalar functions to be determined}.
\end{split}    
\end{align}
To ensure the reality of the solution, it is natural to impose
\begin{align} \label{reality}
    U_{0,-1} = \bar{U}_{0,1}.
\end{align}
Thus, we have $G_0 = \bar{g}_0$.
\begin{remark} \label{conjugate}
To ensure the reality of the WKB solutions, we shall always require 
    \begin{align*}
    U_{n,p} = \bar{U}_{n,-p}, \quad \forall\, n, p \in \mathbb{Z}.
    \end{align*}
    As a result, we only consider $p \geq 0$ in the sequel. 
\end{remark}
%Moreover, by direct computation, we have
%\begin{align} \label{P0P1}
 %   \Pi_0 = \begin{pmatrix}
  %      I_2 & 0_2 & 0_2 \\
   %     0_2^{\rm T} & 0 & 0 \\
  %      0_2^{\rm T} & 0 & 0
  %  \end{pmatrix}, \quad \Pi_1 = \frac{1}{2} \begin{pmatrix}
   %     0_{2 \times 2} & 0_2 & 0_2 \\
   %     0_2^{\rm T} & 1 & i \\
   %     0_2^{\rm T} & -i & 1
  %  \end{pmatrix}, \quad \Pi_{-1} =\bar{\Pi}_1,
%\end{align}
%Combining with the fact that $\Pi_p^2 = \Pi_p$, we  have
%\begin{align} \label{PipLp}
 %    \Pi_p L_p = 0 = L_p \Pi_p.
%\end{align}
%\begin{align*}
 %    \Pi_p L_p^{-1} = 0 = \Pi_p L_p^{-1}.
%\end{align*}
%Also, we have 
%\begin{align} \label{L0L1}
%    L_0^{-1} = 
%    \begin{pmatrix}
 %     0_{2 \times 2} & 0_2 & 0_2 \\
 %     0_2^{\rm T} & 0 & -1 \\
 %     0_2^{\rm T} & 1 & 0
 %   \end{pmatrix}, \quad L_1^{-1} = 
  %  \begin{pmatrix}
  %    -i I_2 & 0_2 & 0_2 \\
 %     0_2^{\rm T} & -\frac{i}{4} & -\frac{1}{4} \\
 %     0_2^{\rm T} & \frac{1}{4} & -\frac{i}{4}
%\end{pmatrix}, \quad L_{-1}^{-1} = \bar{L_1}^{-1}.
%\end{align}
%Then, we can deduce from \eqref{eq_phi-2p} and the invertibility of $L_p$ that

For simplicity, we suppose $U_{0,0} = 0$. Then the leading term $U_0$ has the form
\begin{align} \label {U0form}
    U_0 = e^{i\theta} g_0 e^+ + e^{-i\theta} \bar{g}_0 e^-,
\end{align}
where $g_0$ is a scalar function.

\subsection{$O(\epsilon^{-1})$} \label{o-1}

The equation containing all terms of order $O(\e^{-1}$) is $\Phi_{-1,p} = 0$:
\begin{align} \label{eq-1p}
    -A(\pa_x) U_{0,p} + L_p U_{1,p} = 0, \quad \forall\, p \in \Zz.
\end{align}

\medskip

%As shown in Proposition \ref{Ueven} and Remark \ref{Ueven_all}, it is sufficient to consider \eqref{eq-1p} when p is odd. 
For $p = 1$, applying $L_1^{-1}$ to \eqref{eq-1p} implies
\begin{align} \label{c_eq_Phi-1p_1}
    (I_4 - \Pi_1) U_{1,1} = L_1^{-1} A(\pa_x) U_{0,1}.
\end{align}
Here $L_1^{-1}$ is the partial inverse of $L_1$ and $\Pi_1$ is the orthogonal projection onto the kernel of $L_1$ such that
\begin{align*}
    & L_1 \Pi_1 = \Pi_1 L_1 = 0,
     \quad L_1^{-1} L_1 = L_1 L_1^{-1} = I_4 - \Pi_1. 
\end{align*}
Direct computation gives that
\begin{align} \label{f_L1-1}
   \Pi_1 = \frac{1}{2} \begin{pmatrix}
       0_{2 \times 2} & 0 & 0 \\
       0 & 1 & i \\
        0 & -i & 1
   \end{pmatrix}, \quad   L_1^{-1} = 
    \begin{pmatrix}
      -i I_2 & 0 & 0 \\
      0 & -\frac{i}{4} & -\frac{1}{4} \\
      0 & \frac{1}{4} & -\frac{i}{4}
\end{pmatrix}. 
\end{align}
Moreover, by \eqref{eq_phi-2p} and the definition of $\Pi_1$, there holds
\begin{align*}
    \Pi_1 U_{0,1} = U_{0,1}.
\end{align*}
Together with \eqref{form_U01} and \eqref{f_L1-1}, we can deduce from \eqref{c_eq_Phi-1p_1} that
\begin{align} \label{c_eq_Phi-1p}
    (I_4 - \Pi_1) U_{1,1} = L_1^{-1} A(\pa_x) U_{0,1} = L_1^{-1} A(\pa_x) \Pi_1 U_{0,1} = L_1^{-1} \begin{pmatrix} 
      i \nabla g_0 \\ 
      0 \\ 
      0 
    \end{pmatrix} = \begin{pmatrix} 
      \nabla g_0 \\ 
      0 \\ 
      0 
    \end{pmatrix}.
\end{align}
By \eqref{kerL} and the definition of $\Pi_1$, we know that
\begin{align*}
    \Pi_1 U_{1,1} = g_1 e^+, \quad g_1 \text{ is a scalar function to be determined}.
\end{align*}

\medskip

For $p \geq 2$, we know that $L_p$ is invertible. We  use \eqref{form_U0p} and $U_{0,0} = 0$ to derive
\begin{align} \label{U1p=0}
    U_{1,p} = 0, \quad \forall\, p  \geq 2.
\end{align}

Since $U_{0,0}=0$, we have $L_1 U_{1,0} = 0$. Then, for $p=0$, We can choose $U_{1,0}=0$ as well.

Thus, the form of $U_1$ is
\begin{align} \label{U1form}
    U_1 = e^{i\theta} \left[ g_1 e^+ + \begin{pmatrix} 
    \nabla g_0 \\ 
    0 \\ 
    0 
    \end{pmatrix} \right] + \text{c.c.},
\end{align}
where c.c. denotes the related complex conjugate.

\subsection{$O(\epsilon^0)$}

The equation containing all terms of order $O(\e^0)$ is $\Phi_{0,p} = 0$:
\begin{align} \label{eq_Phi0p}
    \pa_t U_{0,p} - A(\pa_x) U_{1,p} + L_p U_{2,p} = F(U)_{0,p}, \quad \forall\, p \in \Zz,
\end{align}
where  $F(U)_{0,p}$ takes the form in \eqref{form_ff} with  %%寒假的时候多改改, 最好能改完
\begin{align} \label{form_fu0}
\begin{split}
    f(u_0) &= \la (e^{i\xth} g_0 + e^{-i\xth} \bar{g}_0)^3
    = \la ( e^{3i\theta} g_0^3 + 3e^{i\theta} |g_0|^2 g_0 + 3e^{-i\theta} |g_0|^2 \bar{g}_0 + e^{-3i\theta} \bar{g}_0^3 ).
\end{split}
\end{align}

\medskip

Substituting $p = 1$ into equation \eqref{eq_Phi0p}, we actually get
\begin{align} \label{c_eq_Phi0p}
    \pa_t U_{0,1} - A(\pa_x) U_{1,1} + L_1 U_{2,1} = F(U_0)_1 = 
    \begin{pmatrix} 
      0_2 \\
      -3\lambda |g_0|^2 g_0 \\ 
      0 
    \end{pmatrix}.
\end{align}
Applying $\Pi_1$ to \eqref{c_eq_Phi0p} gives
\begin{align} \label{P1U01}
    \pa_t \Pi_1 U_{0,1} - \Pi_1 A(\pa_x) U_{1,1} = \Pi_1 \begin{pmatrix} 
    0_2 \\
    -3\la |g_0|^2 g_0 \\ 
    0 
    \end{pmatrix}.
\end{align}
Use \eqref{c_eq_Phi-1p} and we can get the decomposition %%这里的公式肯定是不行的，明显超过了
\begin{align} \label{decompositon}
\begin{split}
     \Pi_1 A(\pa_x) U_{1,1} &= \Pi_1 A(\pa_x) (I_4 - \Pi_1) U_{1,1} + \Pi_1 A(\pa_x) \Pi_1 U_{1,1}  \\ 
    &=\Pi_1 A(\pa_x) L_1^{-1} A(\pa_x) \Pi_1 U_{0,1} + \Pi_1 A(\pa_x) \Pi_1 U_{1,1} \\
    &=-\frac{i}{2} \Delta_x \Pi_1 U_{0,1},
\end{split}
\end{align}
where we use the two formulas below:
\begin{align} \label{P1APx}
    \Pi_1 A(\pa_x) \Pi_1 = 0,  \quad 
    \Pi_1 A(\pa_x) L_1^{-1} A(\pa_x) \Pi_1 = -\frac{i}{2} \Delta_x \Pi_1,
\end{align}
 the proof of which is rather straightforward. Thus, by \eqref{P1U01}, \eqref{decompositon} and \eqref{P1APx}, we have
\begin{align*}
    \pa_t \Pi_1 U_{0,1} + \frac{i}{2} \Delta_x \Pi_1 U_{0,1} = \Pi_1 
    \begin{pmatrix} 
     0_2 \\ 
      -3 \la |g_0|^2 g_0 \\ 
      0 
    \end{pmatrix}.
\end{align*}
Combining with \eqref{form_U01},  we deduce
\begin{align*} 
    2i \pa_t g_0 - \Delta_x g_0 + 3\la |g_0|^2 g_0 = 0.
\end{align*}
This is exactly the Schr\"{o}dinger equation \eqref{ls_g_0}.

Recall the initial data $U(0, \cdot)$ in \eqref{inital_U}. We then choose the initial data of $g_0$ such that the difference between $U$ and $U_a$ can be as small as possible:
\begin{align*}
    g_0(0, \cdot) e^+ + \bar{g}_0(0, \cdot) e^- = (0, \psi, \phi)^{\rm T}.
    \end{align*}
This implies
\begin{align} \label{g0inital}
    g_0(0, \cdot) = \frac{\phi - i\psi}{2},
    \end{align}
which is exactly the initial data in \eqref{ls_g_0}. Together with \eqref{c_eq_Phi-1p} and \eqref{inital_U}, we can actually have
\begin{align} \label{U0_1}
    U(0, \cdot) = U_0(0, \cdot) + \e (1 - \Pi_1) U_{1,1}(0, \cdot) + \text{c.c.}.
\end{align}
 Combining with the fact  $L_1^{-1} U_{0,1} = 0$ and the decomposition of $U_{1,1}$ in Section \ref{o-1}, we apply $L_1^{-1}$ to \eqref{c_eq_Phi0p} to derive
\begin{align*} 
\begin{split}
    (I_4 - \Pi_1) U_{2,1} &= L_1^{-1} A(\pa_x) U_{1,1} + L_1^{-1} \begin{pmatrix} 
        0_2 \\ 
        -3\lambda |g_0|^2 g_0 \\ 
        0 
    \end{pmatrix} \\
    &= L_1^{-1} A(\pa_x) \Pi_1 U_{1,1} + L_1^{-1} A(\pa_x) L_1^{-1} A(\pa_x) \Pi_1 U_{0,1}  + L_1^{-1} \begin{pmatrix} 
        0_2 \\ 
        -3\la |g_0|^2 g_0 \\ 
        0 
    \end{pmatrix} .
\end{split}
\end{align*}
Noting that $\Pi_1 U_{1,1} = g_1 e^+ $ and $\Pi_1 U_{0,1} = g_0 e^+ $, we can simplify the formula above as
\begin{align} \label{f_1-P1U21}
\begin{split}
     (I_4 - \Pi_1) U_{2,1} &= \begin{pmatrix}
      \nabla g_1 \\ 
      0 \\ 
      0 
    \end{pmatrix} + \frac{1}{4} \begin{pmatrix} 
      0_2 \\ 
      -i(\Delta_x g_0 - 3\la |g_0|^2 g_0) \\ 
      \Delta_x g_0 - 3\lambda |g_0|^2 g_0 \end{pmatrix} = \begin{pmatrix} 
      \nabla g_1 \\ 
      0 \\ 
      0 
    \end{pmatrix} + \frac{1}{2} \begin{pmatrix} 
      0_2 \\ 
      \pa_t g_0 \\ 
      i \pa_t g_0 
    \end{pmatrix}.
\end{split}
\end{align}
Then there exists a scalar function $g_2$ such that
\begin{align} \label{p1U21}
\begin{split}
        \Pi_1 U_{2,1} = \left( g_2 - \frac{i}{2} \pa_t g_0 \right) e^+.
\end{split}
\end{align}
Hence
\begin{align*}
    U_{2,1} = g_2 e^+ + \begin{pmatrix} 
      \nabla g_1 \\ 
      \pa_t g_0 \\ 
      0 
    \end{pmatrix}.
\end{align*}

\medskip

For $p = 2$, it is straightforward to obtain that $F(U)_{0,2}=0$ in \eqref{eq_Phi0p}. So we can deduce from \eqref{form_U0p} and \eqref{U1p=0} that
\begin{align*}
    U_{2,2} = 0.
\end{align*}

\medskip

For $p = 3$, by  \eqref{form_U0p}, \eqref{U1p=0} \eqref{Lp-1form} and \eqref{form_fu0}, equation \eqref{eq_Phi0p} takes the form
\begin{align*}
    U_{2,3} = L_3^{-1} \begin{pmatrix} 0_2 \\ -\lambda g_0^3 \\ 0  \end{pmatrix} = \frac{\lambda g_0^3}{8} \begin{pmatrix} 0_2 \\ 3i \\ 1 \end{pmatrix}.
\end{align*}

For $p \geq 4$, we can directly deduce from the invertibility of  $L_p$ that
\begin{align*}
   U_{2,p} = 0, \quad \forall \, p \geq 4.
\end{align*}

For $p=0$, similar to the selection of $U_{1,0}$, we impose $U_{2,0}=0$.

To sum up,  the form of $U_2$ is
\begin{align} \label{U2form}
    U_2 = e^{i\theta} \left[ g_2 e^+ + \begin{pmatrix} \nabla g_1 \\ \pa_t g_0 \\ 0 \end{pmatrix} \right] + e^{3i\theta} \frac{\lambda g_0^3}{8} \begin{pmatrix} 0_2 \\ 3i \\ 1 \end{pmatrix} + \text{c.c.}.
\end{align}

In the previous arguments, we observe that
\begin{itemize}
    \item For all even integers $p$, we can choose $U_{n,p} = 0$;
    \item For $p$ large enough, $U_{n,p} = 0$.
\end{itemize}
These facts actually always hold, as shown in the following two propositions. 
\begin{proposition}
 \label{Ueven}
    Let $U_a$ be defined in \eqref{WKB_inital} and $U_0$ be given in \eqref{U0form}. If
    \begin{align} \label{Ueven=0}
      U_{n,p} = 0, \quad \forall\, p \in \mathbb{Z}_{\rm even} \, \text{ and } n = 0,\, \ldots,\, K + 2 ,
    \end{align}
    then the equation $\Phi_{n,p} = 0$ holds for all $n = -2,\, -1,\, 0,\, \ldots,\, K$ and $p \in \mathbb{Z}_{\rm even}$.
\end{proposition}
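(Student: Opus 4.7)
The plan is to split $\Phi_{n,p}$ into its linear part (the first three terms on the right of \eqref{Phi_np}) and the cubic forcing $F(U_a)_{n,p}$, and to verify that each vanishes for even $p$ under the hypothesis \eqref{Ueven=0}. Nothing analytic is needed beyond a parity count on the harmonics that a cubic nonlinearity can produce.

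First I would dispatch the linear part. Fix $p \in \Zz_{\rm even}$ and $n \in \{-2, -1, 0, \ldots, K\}$. Then the three indices $n$, $n+1$, $n+2$ all lie in $\{-2, -1, 0, \ldots, K+2\}$. Using the convention $U_{-2,p} = U_{-1,p} = 0$ imposed after \eqref{form_ff}, together with the hypothesis \eqref{Ueven=0}, each of $U_{n,p}$, $U_{n+1,p}$, $U_{n+2,p}$ vanishes, so
\[
\pa_t U_{n,p} - A(\pa_x) U_{n+1,p} + L_p U_{n+2,p} = 0.
\]

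Next I would handle the nonlinearity using \eqref{form_ff}: the only nonzero component of $F(U_a)_{n,p}$ is $-\la$ times a double sum of products $u_{n_1,p_1} u_{n_2,p_2} u_{n_3,p_3}$ over triples satisfying $n_1+n_2+n_3 = n$ and $p_1+p_2+p_3 = p$. When $p$ is even, any such triple $(p_1,p_2,p_3)$ must contain at least one even entry, because a sum of three odd integers is odd. Non-vanishing terms moreover require $n_i \in \{0,\ldots,K+2\}$, and from $n \leq K$ together with $n_i \geq 0$ we actually have $n_i \leq n \leq K < K+2$. The hypothesis \eqref{Ueven=0} then forces $u_{n_i,p_i} = 0$ at that even-harmonic index, so every summand is zero and $F(U_a)_{n,p} = 0$.

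Combining these two observations yields $\Phi_{n,p} = 0$ for all even $p$ and all $n \in \{-2,\ldots,K\}$. There is no genuine obstacle: the argument is purely combinatorial, and the only point worth verifying is that the indices $n_i$ arising in the cubic sum stay within the range where the assumption \eqref{Ueven=0} is available, which follows immediately from $0 \leq n_i \leq n \leq K$.
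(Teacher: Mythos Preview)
Your argument is correct and follows the same route as the paper: show the linear part of $\Phi_{n,p}$ vanishes directly from the hypothesis, then use the parity observation that $p_1+p_2+p_3=p$ even forces at least one $p_i$ even, whence the cubic term vanishes. Your extra care in checking that each $n_i$ stays in the range $\{0,\ldots,K+2\}$ where the hypothesis applies is a worthwhile detail that the paper leaves implicit.
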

\begin{proof}
     By the assumption \eqref{Ueven=0}, for all $n = -2,\, -1,\, 0,\, \ldots,\, K$ and $p \in \mathbb{Z}_{\rm even}$, the left-hand side of \eqref{Phi_np} equals $0$.
    It is left to show the right-hand side is zero. Recall the form of $f(u_a)_{n,p}$ in \eqref{form_ff}.
    To ensure $p_{1} + p_{2} + p_{3} = p$, one of $p_{1}, p_{2}, p_{3}$ must be even. Due to  \eqref{Ueven=0}, we naturally have $f(u_a)_{n,p} = 0$.   
\end{proof}

\begin{proposition}\label{unplarge}
    For all $n = 0, \,1,\, 2, \ldots, K + 2$, let
    \begin{align*}
        p(n) := \begin{cases} 
          n + 1 & \text{if } n \in \Zz_{\rm even}, \\
          n & \text{if } n \in \Zz_{\rm odd}.
\end{cases}
\end{align*}
    Then, for all $p$ with $|p| \geq p(n)+1 $, we have $ U_{n,p} = 0 $. Hence, the order-n harmonic set is
    \begin{align} \label{define_Hn}
  H_n = \{ p \in \mathbb{Z}_{\rm odd} : |p| \leq p(n) \}.
    \end{align}
\end{proposition}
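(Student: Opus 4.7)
The plan is to argue by strong induction on $n$, with the base cases $n = 0, 1, 2$ already supplied by the explicit computations of Sections 3.1--3.3: the supports in $p$ of $U_{0,p}, U_{1,p}, U_{2,p}$ read off from \eqref{form_U0p}, \eqref{U1p=0} and \eqref{U2form} sit inside $\{|p|\leq 1\}$, $\{|p|\leq 1\}$, $\{|p|\leq 3\}$, matching $p(0) = p(1) = 1$ and $p(2) = 3$.

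For the inductive step, assume the claim is known for every index $\leq n+1$. I would then rewrite the order $O(\e^n)$ equation $\Phi_{n, p} = 0$ as
\begin{align*}
    L_p U_{n+2, p} = - \pa_t U_{n, p} + A(\pa_x) U_{n+1, p} + F(U_a)_{n, p},
\end{align*}
and exploit that as soon as $|p| > p(n+2) \geq 1$ we automatically have $|p| \geq 2$, so $L_p$ is invertible by \eqref{Lp-1form} and $U_{n+2, p}$ is uniquely determined by the right-hand side. The two linear terms $\pa_t U_{n, p}$ and $A(\pa_x) U_{n+1, p}$ vanish for $|p| > p(n)$ and $|p| > p(n+1)$, respectively, by the induction hypothesis. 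For the trilinear term $F(U_a)_{n, p}$, the convolution structure displayed in \eqref{form_ff} shows that any nonvanishing contribution requires $p = p_1 + p_2 + p_3$ with $n_1 + n_2 + n_3 = n$ and $u_{n_i, p_i} \neq 0$; the induction hypothesis then forces $|p_i| \leq p(n_i)$, so the term is supported in $|p| \leq p(n_1) + p(n_2) + p(n_3)$.

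The crux of the argument, which I anticipate to be the main (essentially combinatorial) obstacle, is the saturation identity
\begin{align*}
    \max_{n_1 + n_2 + n_3 = n} \bigl( p(n_1) + p(n_2) + p(n_3) \bigr) = p(n+2),
\end{align*}
which I would prove by a parity case analysis on the triple $(n_1, n_2, n_3)$: the maximum $n+3$ is attained, for $n$ even, by taking all three $n_i$ even, and the maximum $n+2$ is attained, for $n$ odd, by taking exactly two of the $n_i$ even. Since also $p(n), p(n+1) \leq p(n+2)$, the right-hand side of the reordered equation vanishes for $|p| > p(n+2)$, so $U_{n+2, p} = 0$ there, closing the induction. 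Combining with Proposition \ref{Ueven}, which removes all even harmonics, yields the claimed harmonic set \eqref{define_Hn}.
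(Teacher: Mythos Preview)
Your proposal is correct and follows essentially the same inductive strategy as the paper: both establish the base cases $n=0,1,2$ from the explicit computations, pass from $n,n+1$ to $n+2$ via the equation $L_p U_{n+2,p}=F(U_a)_{n,p}$ (after noting $U_{n,p}=U_{n+1,p}=0$ for the relevant $p$), and control the cubic convolution by a parity analysis on $(n_1,n_2,n_3)$. Your formulation of the combinatorial step as the explicit saturation identity $\max\bigl(p(n_1)+p(n_2)+p(n_3)\bigr)=p(n+2)$ is slightly cleaner than the paper's direct case check, but the content is the same.
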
 

\begin{proof}
    We prove this proposition by induction. We have shown the result with $n = 0, \,1,\, 2$ in previous sections. Assume that the result holds for $n = 0, \ldots, k+1$. Now we show that $U_{k+2,p} = 0$ for $|p| \geq p(k+2) + 1$. 
    Clearly, $p(k + 2) \geq p(k + 1) \geq p(k)$. Then, for $|p| \geq p(k + 2) + 1$,  we have $U_{k,p} = U_{k+1,p} = 0$ in the equation \eqref{Phi_np} and then
    \begin{align*}
        L_p U_{k+2,p} = F(U_a)_{k,p}.
    \end{align*}
    Recall the form of $f(u_a)_{k,p}$ in \eqref{form_ff}.
    If $k + 2$ is odd, then $k$ is odd and $p(k+2) = k+2, p(k) = k$. To ensure $ n_1 + n_2 + n_3 = k $, at least one of $\{n_1,\,n_2,\,n_3\}$ is odd. Without loss of generality, we assume $n_1$ is odd. By induction assumption, to ensure $f(u_a)_{k,p} \neq 0$, the following condition must hold
    \begin{align*}
        |p_1| \leq n_1, \quad |p_2| \leq n_2 + 1, \quad|p_3| \leq n_3 + 1.
    \end{align*}
    This ensures
    \begin{align*}
        |p| = |p_1 + p_2 + p_3| \leq |p_1| +|p_2| + |p_3| \leq n_1 + n_2 + n_3 + 2 \leq k + 2.
    \end{align*}
    Hence, for all $p$ with $|p| \geq p(k+2) + 1 = k + 3$, $f(u_a)_{k,p} = 0$. Thus $U_{k+2,p} = 0$.
    
    The case $k \in \Zz_{\rm even}$ can be shown similarly. The proof is completed.
\end{proof}
With Propositions \ref{Ueven} and \ref{unplarge}, it is sufficient to consider $p \in \Zz_{\rm even}$ smaller than $p(n)$ in the sequel.
\subsection{$O(\epsilon^1)$} \label{O1}

The equation containing all terms of order $O(\e^1)$ is $\Phi_{1,p} = 0$:
\begin{align} \label{phi_1p}
    \pa_t U_{1,p} - A(\pa_x) U_{2,p} + L_p U_{3,p} &= F_{1,p}, \quad \forall\,p \in \mathbb{Z}.
\end{align}
By \eqref{U0form} and \eqref{U1form}, we have 
\begin{align*}
    f_1 &= 3\lambda u_0^2 u_1 = 3\lambda (e^{i\theta} g_0 + e^{-i\theta} \bar{g}_0)^2 (e^{i\theta} g_1 + e^{-i\theta} \bar{g}_1) \\
    &= 3\lambda \left( e^{3i\theta} g_0^2 g_1 + e^{i\theta} (g_0^2 \bar{g}_1 + 2 |g_0|^2 g_1) + e^{-i\theta} (\bar{g}_0^2 g_1 + 2 |g_0|^2 \bar{g}_1) + e^{-3i\theta} \bar{g}_0^2 \bar{g}_1 \right).
\end{align*}

For $p = 1$, equation \eqref{phi_1p} takes the form
\begin{align} \label{U11_eq}
    \pa_t U_{1,1} - A(\pa_x) U_{2,1} + L_1 U_{3,1} = \begin{pmatrix} 
      0_2 \\ 
      -3\lambda (g_0^2 \bar{g}_1 + 2 |g_0|^2 g_1) 
      \\ 0 
    \end{pmatrix}.
\end{align}
Applying $\Pi_1$ to \eqref{U11_eq}, together with \eqref{P1APx} and \eqref{f_1-P1U21}, we can get
\begin{align} \label{P1U11}
\begin{split}
    \pa_t \Pi_1 U_{1,1} - \Pi_1 A(\pa_x) 
    \begin{pmatrix} 
      \nabla g_1 \\ 
      0 \\ 
      0 
    \end{pmatrix} - \frac{1}{2} \Pi_1 A(\pa_x) \begin{pmatrix} 
      0_2 \\ 
      \pa_t g_0 \\ 
      i \pa_t g_0 
    \end{pmatrix} 
    = \Pi_1 \begin{pmatrix} 
      0_2 \\ 
      -3\lambda (g_0^2 \bar{g}_1 + 2 |g_0|^2 g_1) \\ 
      0 
    \end{pmatrix}.
\end{split}
\end{align}
Noting that for any scalar functions $f$ and $g$, there holds
\begin{align} \label{ob1}
\begin{split}
    \Pi_1 A(\pa_x) \begin{pmatrix} 
      \nabla g \\ 
      0 \\ 
      0 
    \end{pmatrix} &= \Pi_1 \begin{pmatrix} 
      0_2 \\ 
      \Delta_x g \\ 
      0 
    \end{pmatrix}, \quad
    \Pi_1 A(\pa_x) 
    \begin{pmatrix} 
      0_2 \\ 
      f \\ 
      g 
    \end{pmatrix} = 0.
\end{split}
\end{align}
Then equation \eqref{P1U11} is equivalent to
\begin{align} \label{equationg1}
    2i \pa_t g_1 - \Delta_x g_1 + 3\la (g_0^2 \bar{g}_1 + 2 |g_0|^2 g_1) = 0.
\end{align}
 To ensure the initial difference between $U$ and $U_{a}$ is as small as possible, it is natural to impose $\Pi_1 U_{1,1}(0,\cdot) = 0$ in \eqref{U0_1},
which means
\begin{align*}
    g_1(0,\cdot) = 0.
\end{align*}
Then, the solution to \eqref{equationg1} is identically $0$:
\begin{align} \label{g1=0}%所有数字都要加上$$ \equiv section 不需要标号,他会自动标号 \section{} 即可
    g_1\equiv 0.
\end{align}
Then we have $f_1 = 0$. 
Applying $L_1^{-1}$ to \eqref{U11_eq} and using \eqref{f_1-P1U21}--\eqref{U2form} gives
\begin{align} \label{linear-shrodinger}
\begin{split}
    (I_4 - \Pi_1) U_{3,1} &= -\pa_t L_1^{-1} U_{1,1} + L_1^{-1} A(\pa_x) U_{2,1} 
    = \begin{pmatrix} 
      \nabla g_2 \\ 
      0 \\ 
      0 
    \end{pmatrix}.
\end{split}
\end{align}
For $\Pi_1 U_{3,1}$, there exists a scalar function $g_3$ such that
\begin{align*}
\Pi_1 U_{3,1} &=  g_3  e^+.
\end{align*}
Hence
\begin{align} \label{U31_form}
    U_{3,1} &= g_3 e^+ + \begin{pmatrix} 
\nabla g_2 \\ 
0 \\ 
0 
\end{pmatrix}.
\end{align}

For $p = 3$, equation \eqref{phi_1p} implies that
\begin{align} \label{U33_form}
\begin{split}
    U_{3,3} &= L_3^{-1} A(\pa_x) U_{2,3} 
    =  \nabla (g_0^3) \frac{\la}{8} \begin{pmatrix} 
    1_2 \\ 
    0 \\ 
    0 
    \end{pmatrix}.
\end{split}
\end{align}
Using \eqref{U31_form},  \eqref{U33_form} and Proposition \ref{unplarge},  $U_3$ takes the form
\begin{align} \label{U3form}
    U_3 &= e^{i\xth} \left[ g_3 e^+ + \begin{pmatrix} 
    \nabla g_2 \\ 
    0 \\ 
    0 
    \end{pmatrix} \right] + e^{3i\theta} \nabla (g_0^3) \frac{\la}{8} 
    \begin{pmatrix} 
    1_2 \\ 
    0 \\ 
    0 
    \end{pmatrix} 
     + \text{c.c.}.
\end{align} %为什么这个theta我写成th也不行 可以统一写成\xth 或者直接写theta看你  ,后面空一格

\subsection{$O(\epsilon^2)$}
The equation containing all terms of order $O(\e^2)$ is $\Phi_{2,p} = 0$:
\begin{align} \label{eq2p} %begin{align} \end{align} 也要同时修改
    \pa_t U_{2,p}-A(\pa_x) U_{3, p}+L_p U_{4,p}= F_{2,p},\quad \forall\,p \in \mathbb{Z},
\end{align}
 where   $f_2 := 3\la (u_0 u_1^2 + u_0^2 u_2)$.
By \eqref{U1form} and the fact  $u_1=g_{1} = 0$, \eqref{U0form} and \eqref{U2form}, we have %%不能超出页面范围
\begin{align*}
\begin{split}
    f_2 &= 3\la u_0^2 u_2 =3\la ( e^{i\xth} g_0 +e^{-i\xth} \bar{g}_0 )^2 ( e^{i\xth} g_2 + e^{-i\xth} \bar{g}_2+e^{3i\xth}\frac{\la}{8} g_0^3 + e^{-3i\xth}\frac{\la}{8} \bar{g}_0^3 ) \\
    &= 3\la \Big[ e^{5i\xth} \frac{\la}{8} g_0^5 + e^{3i\xth} ( g_0^2 g_2 + \frac{\la}{4} |g_0|^2 g_0^3 ) + e^{i\xth} ( g_0^2 \bar{g}_2 + 2 |g_0|^2 g_2 + \frac{\la}{8} |g_0|^4 g_0 ) \Big] + \text{c.c.} .
\end{split}
\end{align*}

For $p=1$, equation \eqref{eq2p} takes the form
\begin{align} \label{eq21}
    \pa_t U_{2,1} - A(\pa_x) U_{3,1} +L_1 U_{4,1} =
    \begin{pmatrix}
        0_2 \\
        -f_{2,1} \\
        0
    \end{pmatrix}.
\end{align}
In the right-hand side, $f_{2,1}$ is the leading harmonic of $f_2$:
\begin{align} \label{formf21}
    f_{2,1} := 3\la ( g_0^2 \bar{g}_2 + 2 |g_0|^2 g_2 + \frac{\la}{8} |g_0|^4 g_0 ).
\end{align}
Applying $\Pi_1$ to \eqref{eq21} and using the decomposition $U_{3,1} = \Pi_1 U_{3,1} +(I_4- \Pi_1) U_{3,1}$, together with \eqref{P1APx}, \eqref{ob1}, \eqref{linear-shrodinger}, there holds
\begin{align} \label{pi1U21}
    \pa_t \Pi_1 U_{2,1} -\Pi_1
    \begin{pmatrix}
        0_2 \\
        \Delta_x g_2 \\
        0
    \end{pmatrix}
    =\Pi_1
    \begin{pmatrix}
        0_2 \\
        -f_{2,1} \\
        0
    \end{pmatrix},
\end{align}
which is equivalent to (using \eqref{p1U21})
\begin{align} \label{ls2}
    2i \pa_t g_2 +\pa_{t t} g_0 - \Delta_x g_2 +f_{2,1} = 0.
\end{align}
We shall impose a suitable initial data for $g_2$ to ensure the difference between $U$ and $U_a$ is as small as possible. Thanks to the observation in \eqref{U0_1}, \eqref{g1=0}, we know that
\begin{align*}
    U ( 0,\cdot)-U_0(0,\cdot)-\e U_1(0,\cdot) = 0.
\end{align*}
To get a smallest difference, we choose $g_2$ such that
\begin{align} \label{U2=0}
    U_2(0, \cdot) = 0.
\end{align}
By \eqref{U2form} and $g_1(0, \cdot) = 0$, to achieve \eqref{U2=0}, it is sufficient to impose
\begin{align*}
    \begin{cases}
    &i g_2(0, \cdot) + \pa_t g_0(0, \cdot) + \frac{3i\la}{8} g_0^3(0, \cdot) + \text{c.c.} = 0, \\
    &g_2(0, \cdot) + \frac{\la}{8} g_0^3(0, \cdot) + \text{c.c.} = 0.
    \end{cases}
\end{align*}
This implies
\begin{align} \label{reim}
\begin{split}
\begin{cases}
    \Im(g_2(0, \cdot)) &= \Re \left( \pa_t g_0(0, \cdot) + \frac{3i\la}{8} g_0^3(0, \cdot) \right) = -\frac{1}{4} \Delta_x \psi + \frac{21\la}{64} \phi^2 \psi + \frac{9\la}{64} \psi^3, \\
    \Re(g_2(0, \cdot)) &= -\Re \left( \frac{\la}{8} g_0^3(0, \cdot) \right) = -\frac{\la}{64} ( \phi^3 - 3\phi \psi^2 ).
\end{cases}
\end{split}
\end{align}
By \eqref{U2form}, \eqref{U3form}, \eqref{pi1U21} and \eqref{ls2}, applying $L_1^{-1}$ to \eqref{eq21} gives%%, 后面空格 这意味着
\begin{align*}
\begin{split}
    (I_4 - \Pi_1) U_{4,1} = -\pa_t L_1^{-1} U_{2,1} + L_1^{-1} A(\pa_x) U_{3,1} + L_1^{-1} \begin{pmatrix} 
      0_2 \\ 
      -f_{2,1} \\ 
      0 
    \end{pmatrix} = 
    \begin{pmatrix} 
      \nabla g_3 \\ 
      \frac{\pa_t g_2}{2} \\ 
      \frac{i \pa_t g_2}{2} 
    \end{pmatrix}.
\end{split}
\end{align*}
For $\Pi_1 U_{4,1}$, there exists a scalar function $g_4$ such that
\begin{align*}
    \Pi_1 U_{4,1} &=( g_4 - \frac{i}{2} \pa_t g_2 ) e^+.
\end{align*}
Using the above two formulas, we can get the form of $U_{4,1}$.

For $p = 3$ and $p=5$, the calculation method is essentially the same as the previous ones. The detailed calculations can be referred to in \cite{BLZ24}. Here, for brevity, we omit the details and directly present the results.
\begin{comment}
\begin{align*} %%如果不引用就用align*
    L_3 U_{4,3} &= -\pa_t U_{2,3} + A(\pa_x) U_{3,3} - \begin{pmatrix} 
      0_2 \\ 
      f_{2,3} \\
      0 
    \end{pmatrix} \\
    &= - \pa_t (g_0^3) \frac{\la}{8} \begin{pmatrix} 
      0_2 \\ 
      3i \\ 
      1 
    \end{pmatrix} + 
    \frac{\la}{8} \Delta_x (g_0^3)
    \begin{pmatrix}
        0_2 \\
        1 \\
        0
    \end{pmatrix}
    -f_{2,3}
    \begin{pmatrix}
        0_2 \\
        1 \\
        0
    \end{pmatrix}. %%末尾的逗号不要忘记 这等价于
\end{align*}
In the ride side, $f_{2,3}$ is the order-three harmonic of $f_2$
\begin{align*}
    f_{2,3} := 3\la \left( g_0^2 g_2 + \frac{\la}{4} |g_0|^2 g_0^3 \right).
\end{align*}
Hence%%用英语的逗号 统一 没有引用的就用align* 引用了就要加上label
\end{comment}

For $p=3$,
\begin{align} \label{U43form}
    U_{4,3} &= - \pa_t (g_0^3) \frac{\la}{8} 
    \begin{pmatrix}
        0_2 \\
        \frac{5}{4} \\
        -\frac{3i}{4}
    \end{pmatrix}
    +\left[ \frac{\la}{8} \Delta_x (g_0^3)-3 \la (g_0^2 g_2 + \frac{\la}{4} |g_0|^2 g_0^3) \right] %%括号尽量都加上左右 left right
    \begin{pmatrix}
        0_2 \\
        -\frac{3i}{8} \\
        -\frac{1}{8}
    \end{pmatrix}. %%末尾的逗号不要忘记
\end{align} %%逗号前面不需要空格

For $p=5$, 
\begin{align} \label{U45form} %没有引用的就用align* 引用了就
    U_{4,5} = - \frac{3 \la ^2}{8} g_0^5
    \begin{pmatrix}
        0_2 \\
        -\frac{5i}{24} \\
        -\frac{1}{24}    
    \end{pmatrix}.
\end{align} %%统一$$前后不加空格 %没有引用的就用align* 引用了就要加上\label{} 我看老师那个上面每个式子都有编号
 %%所有的字母都需要 doralla符号$U_4$ 包括数字都需要$5$  括号都需要加上 \left[ \right]  而且参考一下老师 \left[ \right]  
 
To sum up, the form of $U_4$ is
\begin{align*} 
    U_4 = e^{i\xth}\left[g_4 e^+ +
    \begin{pmatrix}
        \nabla g_3 \\
        \pa_t g_2 \\
        0
    \end{pmatrix}
    \right] + e^{3i\xth} U_{4,3} + e^{5i\xth} U_{4,5} +\text{c.c.},
\end{align*}
where $U_{4,3}$ and $U_{4,5}$ are given in \eqref{U43form} and \eqref{U45form}, $g_2$ is the solution to \eqref{ls2}--\eqref{reim}.

\subsection{$O(\epsilon^n)$} \label{epn}

Based on the previous WKB expansion, we can deduce the specific form of the WKB solution by induction.   

\subsubsection{$U_{n,p}$ with $p \in \Zz_{\rm even}$}

By conclusion in Proposition \ref{Ueven}, we impose
\begin{align} \label{unpeven}
    U_{n,p} = 0, \quad \forall \, n \in \{0,\,1,\,2,\,\cdots,\,K+2 \}, \quad \forall \, p \in \mathbb{Z}_{\rm even}.
\end{align} %% .句号是在里面的 不是在外面

\subsubsection{$U_{n, \pm 1}$}
For the leading harmonic with $p = \pm 1$, we have
\begin{proposition}\label{Un-1}
    For $n = 0,\,1,\, 2,\, \ldots,\, K + 2$, there exist scalar functions $g_n$ such that
\begin{align*}
\begin{split}
    &(1 - \Pi_1) U_{n,1} = 
    \begin{pmatrix} 
      \nabla g_{n-1} \\
      \frac{1}{2} \pa_t g_{n-2} \\ 
      \frac{i}{2} \pa_t g_{n-2} 
    \end{pmatrix}, \quad \Pi_1 U_{n,1} = ( g_n - \frac{i}{2} \pa_t g_{n-2} ) e^+, \quad  U_{n,1} = g_n e^+ + 
    \begin{pmatrix} 
      \nabla g_{n-1} \\ 
      \pa_t g_{n-2} \\ 
      0 
    \end{pmatrix}.
\end{split}
\end{align*} %%schrodinger方程用英文
We impose $g_{-2}=g_{-1}=0$. Moreover, for all $n \in \{0,\, 1,\, 2,\, \ldots,\, K\}$, $g_n$ satisfies Schr\"{o}dinger equation
\begin{align} \label{induceeq} %%里面空一下
    2i \pa_t g_n + \pa_{t t} g_{n-2} - \Delta_x g_n + f(u_a)_{n,1} = 0.
\end{align}
\end{proposition}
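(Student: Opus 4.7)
The proof proceeds by strong induction on $n$. The base cases $n = 0, 1, 2, 3, 4$ were established explicitly in the preceding subsections: $g_0$ solves \eqref{ls_g_0}, $g_1 \equiv 0$ by the choice of initial data, and $g_2$ solves \eqref{ls2}. Assume the decomposition of $U_{m,1}$ holds for all $m \leq n+1$ and that $g_0, \ldots, g_n$ have been produced satisfying \eqref{induceeq}; the plan is to determine $U_{n+2,1}$ and, when $n+2 \leq K$, the equation \eqref{induceeq} for $g_{n+2}$ at the next step via the relation $\Phi_{n,1} = 0$:
\begin{align*}
\pa_t U_{n,1} - A(\pa_x) U_{n+1,1} + L_1 U_{n+2,1} = F(U_a)_{n,1}.
\end{align*}

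First I would apply $L_1^{-1}$ to both sides to read off $(I_4 - \Pi_1) U_{n+2,1}$. Substituting the inductive forms of $U_{n,1}$ and $U_{n+1,1}$, using the explicit matrix in \eqref{f_L1-1}, and invoking the Schr\"odinger equation \eqref{induceeq} for $g_n$ to cancel the nonlinear contribution $L_1^{-1} F(U_a)_{n,1}$ against the $-L_1^{-1} \pa_t U_{n,1}$ and $L_1^{-1} A(\pa_x) \Pi_1 U_{n+1,1}$ terms (exactly the mechanism used in \eqref{f_1-P1U21}), the right-hand side collapses to the column
\begin{align*}
\begin{pmatrix} \nabla g_{n+1} \\ \tfrac{1}{2} \pa_t g_n \\ \tfrac{i}{2} \pa_t g_n \end{pmatrix}.
\end{align*}
The polarized component is then free, and I would \emph{define} $g_{n+2}$ by $\Pi_1 U_{n+2,1} = \bigl(g_{n+2} - \tfrac{i}{2}\pa_t g_n\bigr) e^+$, which combined with the previous expression yields the stated closed form of $U_{n+2,1}$.

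To produce \eqref{induceeq}, I would next apply $\Pi_1$ to the same identity $\Phi_{n,1} = 0$; since $\Pi_1 L_1 = 0$, this gives
\begin{align*}
\pa_t \Pi_1 U_{n,1} - \Pi_1 A(\pa_x) U_{n+1,1} = \Pi_1 F(U_a)_{n,1}.
\end{align*}
Decomposing $U_{n+1,1} = \Pi_1 U_{n+1,1} + (I_4 - \Pi_1) U_{n+1,1}$ and applying the algebraic identities \eqref{P1APx} and \eqref{ob1}, together with the inductive formula for $(I_4 - \Pi_1) U_{n+1,1}$, reduces the second term on the left to $-\tfrac{i}{2}\Delta_x \Pi_1 U_{n,1}$, just as in \eqref{decompositon}. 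Expanding $\pa_t \Pi_1 U_{n,1}$ using the formula $\Pi_1 U_{n,1} = (g_n - \tfrac{i}{2}\pa_t g_{n-2}) e^+$ and reading off the coefficient of $e^+$ delivers \eqref{induceeq}, closing the induction.

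The main obstacle is the algebraic bookkeeping in the first step: one must confirm that the hidden $-\tfrac{i}{2}\pa_t g_{n-2}$ correction inside $\Pi_1 U_{n,1}$ conspires with $\pa_t$ and with $L_1^{-1}$ acting on the gradient slot to produce precisely the compact column above, and simultaneously that \eqref{induceeq} for $g_n$ is exactly what is needed to cancel the $L_1^{-1} F(U_a)_{n,1}$ contribution. This is the same cancellation already verified for $n=2$ in Section 3.5 and for $n=4$ in Section 3.6; no new phenomena arise for general $n$, so the induction step reduces to that computation carried out in template form.
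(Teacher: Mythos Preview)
Your inductive strategy matches what the paper does (the cases $n\le 4$ are worked out explicitly and the general step is deferred to \cite{BLZ24}), and the mechanism---splitting $\Phi_{n,1}=0$ via $\Pi_1$ and $L_1^{-1}$---is exactly right. Two details need correction.

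First, the bookkeeping is circular: you assume \eqref{induceeq} for $g_n$ in the hypothesis and then rederive it from $\Pi_1\Phi_{n,1}=0$. The clean framing is to assume \eqref{induceeq} only up to $g_{n-1}$; applying $\Pi_1$ to $\Phi_{n,1}=0$ then \emph{produces} the equation for $g_n$, which is in turn used in the $L_1^{-1}$ computation. Your two steps should therefore be swapped.

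Second, the claim that $\Pi_1 A(\pa_x) U_{n+1,1}$ reduces to $-\tfrac{i}{2}\Delta_x \Pi_1 U_{n,1}$ ``just as in \eqref{decompositon}'' is wrong for $n\ge 2$. In \eqref{decompositon} it works because $(I_4-\Pi_1)U_{1,1}=L_1^{-1}A(\pa_x)\Pi_1 U_{0,1}$, so \eqref{P1APx} applies directly. For general $n$, the inductive form $(I_4-\Pi_1)U_{n+1,1}=(\nabla g_n,\tfrac{1}{2}\pa_t g_{n-1},\tfrac{i}{2}\pa_t g_{n-1})^{\rm T}$ together with \eqref{ob1} gives
\[
\Pi_1 A(\pa_x) U_{n+1,1}=\Pi_1\begin{pmatrix}0_2\\ \Delta_x g_n\\ 0\end{pmatrix}=-\tfrac{i}{2}\,\Delta_x g_n\, e^+,
\]
which differs from your expression by $-\tfrac{1}{4}\Delta_x\pa_t g_{n-2}\,e^+$. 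Using your formula would insert a spurious $\Delta_x\pa_t g_{n-2}$ term into the equation; with the correct one, expanding $\pa_t\Pi_1 U_{n,1}=(\pa_t g_n-\tfrac{i}{2}\pa_{tt}g_{n-2})e^+$ yields \eqref{induceeq} on the nose. This is precisely the computation in \eqref{pi1U21}--\eqref{ls2} for $n=2$, and that is the correct template for general $n$.
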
 
The proof of this proposition can be found in Section 3 of \cite{BLZ24}. We omit the details for brevity.
\subsubsection{$U_{n,p}$ with $p$ large}

By Proposition \ref{unplarge}, we know that $U_{n,p} = 0$ for $p\geq p(n)+1$.

\subsubsection{The specific form of $U_n$}

In Proposition \ref{Un-1}, we have got the specific form of $U_{n,\pm1}$. Now we consider the case $p \geq 3$. According to \eqref{unpeven} and conclusion in Proposition \ref{unplarge}, it is enough to consider $p \in \{3,\, 5,\, \ldots,\, p(n)\}$. The following proposition gives the specific form of the WKB solution and the induction relations.
\begin{proposition}\label{unpall}
    For all $n \in \{0,\, 1,\, 2,\, \ldots\, K\} \cap \mathbb{Z}_{\rm odd}$, we impose $g_n(0,\cdot) = 0$. Then,
    \begin{enumerate}
        \item For all $n \in \{0,\, 1,\, 2,\, \ldots,\, K\} \cap \mathbb{Z}_{\rm odd}$, $g_n = 0$.
        \item For all $n \in \{0,\, 1,\, 2,\, \ldots,\, K + 2\} \cap \mathbb{Z}_{\rm even}$, and $p \in \{ 3,\, 5,\, \ldots,\, n+1 \}$, the form of $U_{n,p}$ is
        \begin{align*}
            U_{n,p} 
            = \begin{pmatrix} 
              0_2 \\ 
              \pa_t u_{n-2,p} + i p u_{n,p} \\ 
              u_{n,p} 
            \end{pmatrix},
        \end{align*}
        with
        \begin{align} \label{Uk+4pp}
            u_{n,p} &= \frac{1}{1 - p^2} \left( -f(u_a)_{n-2,p} - \pa_t^2 u_{n-4,p} + \Delta_x u_{n-2,p} - 2ip \pa_t u_{n-2,p} \right).
        \end{align}
        For all $n \in \{0,\, 1,\, 2,\, \ldots,\, K + 2\} \cap \mathbb{Z}_{\rm even}$, the form of $U_n$ is
        \begin{align*}
            U_n = e^{i\xth} \left( g_n e^+ + 
            \begin{pmatrix} 
              0_2 \\ 
              \pa_t g_{n-2} \\ 
              0 
            \end{pmatrix} \right) + 
            \sum_{ p = 3 }^{ n + 1 } e^{ip\xth} U_{n,p} + \text{c.c.}.
        \end{align*}
        \item  For all $n \in \{0,\, 1,\, 2,\, \ldots,\, K + 2\} \cap \mathbb{Z}_{\rm odd}$ and $p \in \{ 3,\, 5,\, \ldots,\, n \}$, the form of $U_{n,p}$ is
        \begin{align*}
            U_{n,p} =  \begin{pmatrix} 
              \nabla u_{n-1, p} \\ 
              0 \\ 
              0 
            \end{pmatrix}.
        \end{align*}
         For all $n \in \{0, 1, 2, \ldots, K + 2\} \cap \mathbb{Z}_{\rm odd}$, the form of $U_n$ is
         \begin{align*}
            U_n = e^{i\xth} \left( g_n e^+ + 
            \begin{pmatrix} 
              \nabla g_{n-1} \\ 
              0 \\ 
              0 
            \end{pmatrix} \right) + 
            \sum_{ p = 3 }^{ n + 1 } e^{i p\xth} U_{n,p} + \text{c.c.}.
        \end{align*}
    \end{enumerate}
\end{proposition}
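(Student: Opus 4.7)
The plan is a simultaneous strong induction on $n$ in which the three claims (the vanishing of $g_n$ for odd $n$, the form of $U_{n,p}$ for even $n$ with $p\geq 3$, and the form of $U_{n,p}$ for odd $n$ with $p\geq 3$) are carried along together. The base cases $n=0,1,2,3$ are exactly what was checked explicitly in Sections~3.1--3.4 above (together with $g_1=0$ from \eqref{g1=0}, and the forms of $U_{3,3}$, $U_{4,3}$, $U_{4,5}$). Fix $n\in\{4,\ldots,K+2\}$ and assume all three statements hold for indices strictly less than $n$. The key tool throughout is to re-index the defining relation \eqref{Phi_np} as
\begin{equation*}
L_p\,U_{n,p}=-\pa_t U_{n-2,p}+A(\pa_x)U_{n-1,p}+F(U_a)_{n-2,p},
\end{equation*}
together with the explicit partial inverse $L_p^{-1}$ in \eqref{Lp-1form}, which is available for every $|p|\geq 2$. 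Proposition \ref{Un-1} already takes care of the $p=\pm 1$ harmonic, Proposition \ref{Ueven} handles the case $p\in\Zz_{\rm even}$, and Proposition \ref{unplarge} caps the range at $|p|\leq p(n)$, so only $p\in\{3,5,\ldots,p(n)\}$ needs attention.

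For even $n$ and odd $p$ with $3\leq p\leq n+1$, I would insert the induction hypothesis on $U_{n-2,p}$ (which is $(0_2,\pa_t u_{n-4,p}+ip\,u_{n-2,p},u_{n-2,p})^{\rm T}$) and on $U_{n-1,p}$ (which is $(\nabla u_{n-2,p},0,0)^{\rm T}$) into the right-hand side above. A short direct computation using the explicit forms of $A(\pa_x)$ and $F(U_a)_{n-2,p}$ yields
\begin{equation*}
L_p U_{n,p}=\bigl(0_2,\,-\pa_t^2 u_{n-4,p}-ip\,\pa_t u_{n-2,p}+\Delta_x u_{n-2,p}-f(u_a)_{n-2,p},\,-\pa_t u_{n-2,p}\bigr)^{\rm T}.
\end{equation*}
Applying $L_p^{-1}$ from \eqref{Lp-1form} annihilates the first two coordinates, and an elementary manipulation of the $(v,u)$-block (of the kind done to pass from \eqref{eq_Phi0p} to \eqref{f_1-P1U21}) produces the scalar $u_{n,p}$ given by \eqref{Uk+4pp} and the corresponding $v$-component $\pa_t u_{n-2,p}+ip\,u_{n,p}$. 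The odd case is even simpler: for odd $n$ and odd $p$ with $3\leq p\leq n$, the induction forces the right-hand side to collapse to $(ip\,\nabla u_{n-1,p},0,0)^{\rm T}$ (the Laplacian term from the even profile cancels the time derivative of the gradient profile), and $L_p^{-1}$ returns $(\nabla u_{n-1,p},0,0)^{\rm T}$.

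The remaining statement, $g_n=0$ for odd $n$, is obtained from the Schr\"odinger equation \eqref{induceeq} of Proposition \ref{Un-1} with the prescribed initial condition $g_n(0,\cdot)=0$. The critical verification is that the source vanishes: by induction $g_{n-2}=0$ so $\pa_{tt}g_{n-2}=0$, and every monomial $u_{n_1,p_1}u_{n_2,p_2}u_{n_3,p_3}$ contributing to $f(u_a)_{n,1}$ has $n_1+n_2+n_3=n$ odd, hence at least one index $n_i$ is odd; for that factor either $|p_i|=1$ (and the $u$-component is $g_{n_i}=0$ by induction) or $|p_i|\geq 3$ (and the $u$-component is $0$ by the odd case of the induction), or $p_i$ is even (forced to zero by Proposition \ref{Ueven}). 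Uniqueness for the linear cubic Schr\"odinger equation then gives $g_n\equiv 0$.

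The main obstacle is this last parity bookkeeping: one must check carefully that the nonlinear source $f(u_a)_{n-2,p}$ appearing in each of the three inductive steps behaves consistently with the inductive forms, and that the structural $u$-component identities (which component of $e^+$ is $1$, which is $i$) do not introduce spurious nonzero contributions. Once this is verified, the identification of $U_{n,p}$ via $L_p^{-1}$ is a line of algebra for each $p$, and the expressions for $U_n$ itself follow simply by summing over $|p|\leq p(n)$, adding the leading harmonic from Proposition \ref{Un-1}, and using the reality condition of Remark \ref{conjugate} to insert the complex conjugate.
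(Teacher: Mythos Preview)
Your inductive strategy is exactly the approach the paper indicates (the paper itself only writes ``can be proved by induction, as showed in \cite{BLZ24}'' and gives no details), and your computations for the even case and for $g_n=0$ at odd $n$ are correct. One small correction in the odd case: the cancellation you invoke is not between a Laplacian and a time derivative; rather, $-\pa_t U_{n-2,p}=-(\pa_t\nabla u_{n-3,p},0,0)^{\rm T}$ cancels against the $\nabla(\pa_t u_{n-3,p})$ piece of $A(\pa_x)U_{n-1,p}=(\nabla(\pa_t u_{n-3,p}+ip\,u_{n-1,p}),0,0)^{\rm T}$, leaving $(ip\nabla u_{n-1,p},0,0)^{\rm T}$ as you claim---and you must also use that $f(u_a)_{n-2,p}=0$ for odd $n-2$ by the same parity argument you give for $f(u_a)_{n,1}$, which you only flag in the final paragraph rather than state explicitly.
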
 
Proposition \ref{unpall}  can be proved by induction, as showed in \cite{BLZ24}.

Introduce the notation 
\begin{align*}
    \pa_x^n := (\pa_x^\alpha)_{\alpha \in \mathbb{N}^2, |\alpha| \leq n},\quad \forall\, n \in \mathbb{Z}_+.
\end{align*}
By induction,  we can deduce (see \cite{BLZ24})  from Proposition \ref{unpall} that:
\begin{corollary} \label{poly}
    For all $n \in \{ 0,\,1,\,2, \ldots,\, K + 2 \} \cap \mathbb{Z}_{\rm odd}$, we impose the initial data $g_n(0, \cdot) = 0$. Then  for all $n \in \{ 0,\,1,\,2, \ldots,\, K + 2 \} \cap \mathbb{Z}_{\rm even}$ and $p \geq 3$,
    \begin{align*}
        u_{n,p} &= P_n \left( \pa_x^{n-2} (g_0, \bar {g}_0), \pa_x^{n-4} ( g_2, \bar g_2) \ldots , g_{n-2}, \bar g_{n-2} \right).
    \end{align*}
    Thus 
    \begin{enumerate}
        \item[(i)] For all $n \in \{ 2, \ldots ,\, K + 2 \} \cap \mathbb{Z}_{\rm even}$ and $p \geq 3$,
        \begin{align*}
            U_{n,p} &= P_n \left( \pa_x^{n-2} (g_0, \bar {g}_0), \pa_x^{n-4} ( g_2, \bar g_2) \ldots , g_{n-2}, \bar g_{n-2} \right).
        \end{align*}
        \item[(ii)] For all $n \in \{ 2, \ldots ,\, K + 2 \} \cap \mathbb{Z}_{\rm odd}$ and $p \geq 3$,
        \begin{align*}
            U_{n,p} &= P_n \left( \pa_x^{n-2} (g_0, \bar {g}_0), \pa_x^{n-4} ( g_2, \bar g_2) \ldots , g_{n-3}, \bar g_{n-3} \right).
        \end{align*} %公式里面的逗号如果感觉连得太近了 \, 用这个隔开, 如果是括号里面两个的话,就不用了,多个最好隔开一下
    \end{enumerate}
\end{corollary}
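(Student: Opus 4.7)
My plan is to prove Corollary \ref{poly} by induction on the even index $n$, combining the explicit recursion \eqref{Uk+4pp} from Proposition \ref{unpall} with the Schr\"odinger equation \eqref{induceeq} to systematically convert every time derivative into spatial derivatives of the $g_k$. The base case $n=2$ is already in hand from Section 3.3, where it was computed that $u_{2,3} = \lambda g_0^3/8$; this trivially matches the target template since $n-2=0$, meaning no derivatives on $(g_0,\bar g_0)$ are required.

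For the inductive step, I would fix even $n \geq 4$ and $p \geq 3$, assume the claim for all smaller even indices, and inspect the four ingredients of the recursion \eqref{Uk+4pp}. The Laplacian contribution $\Delta_x u_{n-2,p}$ is direct: by induction $u_{n-2,p}$ is a polynomial in $\partial_x^{n-4}(g_0,\bar g_0),\ldots,(g_{n-4},\bar g_{n-4})$, and the two extra spatial derivatives match the template at level $n$. The nonlinear contribution $f(u_a)_{n-2,p}$ is a finite sum of products $u_{n_1,p_1}u_{n_2,p_2}u_{n_3,p_3}$ with $n_1+n_2+n_3=n-2$; for each factor the induction hypothesis applies when $p_i \neq \pm 1$, Proposition \ref{Un-1} applies when $p_i=\pm 1$ (giving $u_{n_i,1}=g_{n_i}$), and Proposition \ref{unpall}(1) gives $g_k \equiv 0$ for odd $k$ so those factors drop out. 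Multiplying three polynomials of the required type produces another polynomial of the required type.

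The hard part will be eliminating the time-derivative terms $\partial_t u_{n-2,p}$ and $\partial_t^2 u_{n-4,p}$, since the target statement allows only spatial derivatives of the $g_k$. My strategy is to first expand via the chain rule so that the problem reduces to computing $\partial_t g_k$ and $\partial_{tt} g_k$ for even $k<n$, and then invoke \eqref{induceeq} in the form
\[
2i\partial_t g_k = \Delta_x g_k - \partial_{tt} g_{k-2} - f(u_a)_{k,1},
\]
with the $k=0$ case reducing via \eqref{ls_g_0}. Iterating this substitution finitely many times eliminates every time derivative. The delicate bookkeeping is to verify the derivative budget: each elimination of $\partial_t g_k$ either costs two extra spatial derivatives on $g_k$ or shifts the problem to $g_{k-2}$, and both moves remain within the hierarchy stated in the corollary (spatial derivatives of order $n-2-k$ allowed on $g_k$).

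Parts (i) and (ii) should then follow directly. For even $n$, Proposition \ref{unpall}(2) gives $U_{n,p}=(0_2,\partial_t u_{n-2,p}+ip\,u_{n,p},u_{n,p})^{\rm T}$, so the polynomial form of $u_{n,p}$ together with one more application of the time-to-space substitution yields the claim. For odd $n$, Proposition \ref{unpall}(3) gives $U_{n,p}=(\nabla u_{n-1,p},0,0)^{\rm T}$, which reduces to the even case at index $n-1$ with one additional spatial derivative and again fits the stated template.
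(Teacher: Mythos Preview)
Your proposal is correct and follows exactly the approach the paper indicates: the paper itself does not give a detailed proof but simply states that the corollary follows by induction from Proposition~\ref{unpall} (citing \cite{BLZ24}). Your inductive scheme built on the recursion \eqref{Uk+4pp}, together with the elimination of time derivatives via the Schr\"odinger hierarchy \eqref{induceeq} and \eqref{ls_g_0}, is precisely the intended argument.
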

Here, we use $P_n$ to denote a polynomial whose degree depends on $n$.

\subsubsection{Initial data of $U_{n,p}$}

In \eqref{g0inital}, we have imposed the initial data $g_0 ( 0, \cdot) = \frac{\phi - i\psi}{2}$, $g_1=0$, then we have
\begin{align*}
    ( U_0 + \e U_1 )(0, \cdot) = (\e \nabla^{\rm T} \phi, \psi, \phi) = U(0, \cdot),
\end{align*} 
with $U(0, \cdot)$ defined in \eqref{inital_U}.
To ensure that the difference $U(0, \cdot) - U_a(0, \cdot)$ is small, we have chosen $g_2(0, \cdot)$ in \eqref{reim} to ensure $U_2(0, \cdot) = 0$. Moreover, we set $g_3(0, \cdot) = 0$ and deduce from \eqref{U3form} that
\begin{align*}
    U_3(0, \cdot) = 0.
\end{align*}
By induction, we can show that by choosing initial data of $g_n$ properly, we can ensure that for $n = 2,\, 3,\, \cdots, K$, $U_n(0, \cdot) = 0$. 
\begin{proposition}\label{initalUn}
    Let $g_n(0, \cdot) = 0$ for all $n \in \{1,\, 2,\, \cdots,\, K\} \cap \mathbb{Z}_{\rm odd}$. Then, for all $n \in \{2,\, \cdots,\, K\} \cap \mathbb{Z}_{\rm even}$, there exists a polynomial $P_n(\pa^n_x \phi, \pa^n_x \psi)$, by choosing initial data $g_n(0, \cdot) = P_n(\pa^n_x \phi, \pa^n_x \psi)$, we can ensure
    \begin{align} \label{Uninital}
        U_n(0, \cdot) = 0, \quad \forall\,  n \in \{2,\, \cdots,\, K,\, K + 1\}.
    \end{align}
\end{proposition}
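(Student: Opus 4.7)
The plan is to argue by induction on $n\in\{2,3,\ldots,K+1\}$, with the base cases $n=2,3$ already established in the discussion immediately preceding the statement (namely \eqref{reim} handles $n=2$, and the choice $g_3(0,\cdot)=0$ together with the explicit form \eqref{U3form} handles $n=3$). For the inductive step I would fix $n$, assume \eqref{Uninital} for all smaller indices together with the inductive polynomial representation of each previously selected $g_m(0,\cdot)$, and then split according to the parity of $n$.

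For \emph{even} $n$ with $4\le n\le K$, evaluating the decomposition from Proposition \ref{unpall}(2) at $t=0$ (so $\theta=0$) produces
\begin{align*}
U_n(0,x)=2\,\Re\!\left[g_n(0,x)\,e^+ + \begin{pmatrix}0_2 \\ \partial_t g_{n-2}(0,x)\\ 0\end{pmatrix} + \sum_{p=3}^{n+1}U_{n,p}(0,x)\right].
\end{align*}
Since $e^+=(0,0,i,1)^{\mathrm T}$ and each $U_{n,p}$ with $p\ge 3$ (still by Proposition \ref{unpall}(2)) has vanishing first two components, the $w$-part of $U_n(0,\cdot)$ is automatically zero, and the remaining two scalar equations read
\begin{align*}
\Re g_n(0,\cdot) &= -\Re\sum_{p=3}^{n+1} u_{n,p}(0,\cdot), \\
\Im g_n(0,\cdot) &= \Re\,\partial_t g_{n-2}(0,\cdot)+\Re\sum_{p=3}^{n+1}\bigl(\partial_t u_{n-2,p}(0,\cdot)+ip\,u_{n,p}(0,\cdot)\bigr).
\end{align*}
I would take these two identities as the definition of $g_n(0,\cdot)$. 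The right-hand sides are explicit polynomials in $(\phi,\psi)$: the Schr\"{o}dinger equation \eqref{induceeq} rewrites $\partial_t g_{n-2}(0,\cdot)$ through $\Delta_x g_{n-2}$ and lower-order $g_m$, and Corollary \ref{poly} expresses both $u_{n,p}(0,\cdot)$ and $\partial_t u_{n-2,p}(0,\cdot)$ as polynomial functions of $x$-derivatives of the $g_m(0,\cdot)$ with $m<n$. Substituting the inductive representation yields the desired $g_n(0,\cdot)=P_n(\partial_x^n\phi,\partial_x^n\psi)$.

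For \emph{odd} $n$ with $3\le n\le K+1$, I would set $g_n(0,\cdot)=0$ (forced for $n\le K$ by Proposition \ref{unpall}(1), and a natural extension for $n=K+1$). Proposition \ref{unpall}(3) then reduces the vanishing of $U_n(0,\cdot)$ to
\begin{align*}
U_n(0,x)=\Bigl(2\,\Re\nabla\bigl(g_{n-1}+\textstyle\sum_{p=3}^{n} u_{n-1,p}\bigr)(0,x),\,0,\,0\Bigr)^{\!\mathrm T},
\end{align*}
and the quantity inside the gradient is exactly the combination that the $u$-component equation at the previous even step $n-1$ set to zero. Thus no new choice is required at odd $n$, and the induction carries through. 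The main obstacle I anticipate is not analytic but combinatorial: one must verify that the order of $x$-derivatives of $(\phi,\psi)$ appearing in $g_n(0,\cdot)$ is at most $n$, carefully balancing the two derivatives introduced by $\Delta_x g_{n-2}$ in \eqref{induceeq} against the $(n-2)$-derivative budget inherited through Corollary \ref{poly}. A careful accounting of this derivative count along the induction closes the argument.
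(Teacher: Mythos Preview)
Your proposal is correct and follows essentially the same approach as the paper's proof: both use the structural decomposition of $U_n$ from Proposition \ref{unpall}, evaluate at $t=0$ (so $\theta=0$), read off the two scalar equations for $\Re g_n(0,\cdot)$ and $\Im g_n(0,\cdot)$ at even indices, and then observe that the odd-index $U_n(0,\cdot)$ is a gradient of exactly the real combination set to zero at the preceding even step. The only organizational difference is that the paper inducts on odd $k$ and handles the pair $(U_{k+1},U_{k+2})$ in one stroke, whereas you induct on $n$ and split by parity; the content is the same.
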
 
\begin{proof}
    We know from Proposition \ref{unpall} that for odd $n$, $g_n = 0$. We have shown \eqref{Uninital} for $n = 1,\, 2,\, 3$. In particular, $P_1 = g_0(0, \cdot)$ and $P_3 = g_2(0, \cdot)$ have been given in \eqref{g0inital} and \eqref{reim}. Suppose $k$ is odd  and satisfies $2 \leq k \leq K$. We assume \eqref{Uninital} holds for $n \leq k$. Now we show there exists a polynomial $g_{k+1}(0,\cdot)=P(\pa^k_x \phi, \pa^k_x \psi)$ such that 
    \begin{align*}
        U_{k+1}(0, \cdot) = U_{k+2}(0, \cdot) = 0.
    \end{align*}
    By the second result in Proposition \ref{Un-1}, we know that the form of $U_{k+1}$ is
    \begin{align*}
        U_{k+1} = e^{i\xth} \left[ g_{k+1} e^+ + 
        \begin{pmatrix} 
        0_2 \\ 
        \pa_t g_{k-1} \\ 
        0 
        \end{pmatrix} \right] + \sum_{p=3}^{k+2} e^{ip\xth} \begin{pmatrix} 
        0_2 \\ 
        \pa_t u_{k-1,p} + ip u_{k+1,p} \\ 
        u_{k+1,p} 
        \end{pmatrix} + \text{c.c.}.
    \end{align*}
    To ensure $U_{k+1}(0, \cdot) = 0$, it is sufficient to impose $u_{k+1} = v_{k+1} = 0$. Specifically, 
    \begin{align*}
        \begin{cases}
          ig_{k+1}(0, \cdot) + \pa_t g_{k-1}(0, \cdot) + \sum_{p=3}^{k+2} (\pa_t u_{k-1,p} + ip u_{k+1,p})(0, \cdot) + \text{c.c.} = 0, \\
          g_{k+1}(0, \cdot) + \sum_{p=3}^{k+2} u_{k+1,p}(0, \cdot) + \text{c.c.} = 0.
        \end{cases}
    \end{align*}
    Separate the equation into its real and imaginary parts
    \begin{align} \label{gk+1form}
    \begin{split}
        \begin{cases} %%Re Im 用的是\Re \Im
          \Im (g_{k+1}(0, \cdot)) = \Re \left( \pa_t g_{k-1}(0, \cdot) + \sum_{p=3}^{k+2} (\pa_t u_{k-1,p} + ip u_{k+1,p})(0, \cdot) \right), \\
          \Re (g_{k+1}(0, \cdot)) = -\Re \left( \sum_{p=3}^{k+2} u_{k+1,p}(0, \cdot) \right).
        \end{cases}
    \end{split}
    \end{align}
    By Proposition \ref{Un-1}, Corollary \ref{poly} and induction assumption, together with the Schr\"{o}dinger equation \eqref{induceeq} with $n \leq k - 1$,  $g_{k+1}(0, \cdot)$ can be written as polynomials in $(\pa^k_x \phi, \pa^k_x \psi)$. 
    Next, we show that the initial data given in \eqref{gk+1form} can ensure $U_{k+2}(0, \cdot) = 0$. By Proposition \ref{Un-1}, we can deduce
    \begin{align*}
        U_{k+2} = e^{i\xth} 
        \begin{pmatrix} 
        \nabla g_{k+1} \\ 
        0 \\ 
        0 
        \end{pmatrix} + 
        \sum_{p=3}^{k+2} e^{ip\xth} 
        \begin{pmatrix} 
        \nabla u_{k+1,p} \\ 
        0 \\ 
        0 
        \end{pmatrix} + \text{c.c.}.
    \end{align*}
    Together with the second equation in \eqref{gk+1form}, we have $U_{k+2}(0, \cdot) = 0$. The proof is completed.
\end{proof}

\subsection{Regularity of WKB solution: focusing case}
Now we summarize the results in Section \ref{epn} and give the regularity of the WKB solution. As shown in \eqref{poly}, each $U_{n,p}$ can be written as a polynomial of $g_0,\, g_2,\, \cdots,\, g_n$ and their derivatives up to order $n-2$. Hence, the regularity of $U_{n,p}$ is determined by $g_0,\, g_2,\, \cdots,\, g_n$. We focus on the focusing case ($\la < 0$).
\begin{proposition}\label{regf}
    Let $\la < 0$, $K \in \mathbb{Z}_{+} \cap \mathbb{Z}_{\rm even}$. For all $n \in \{0,\, 1,\, 2,\, \cdots,\, K\} \cap \mathbb{Z}_{\rm odd}$,  let $g_n \equiv 0$.  For all $n \in \{0,\, 1,\, 2,\, \cdots,\, K\} \cap \mathbb{Z}_{\rm even}$, let $g_n$ be the solution to the Schr\"{o}dinger equation \eqref{induceeq} with initial data $g_n(0, \cdot)$ be chosen as in Proposition \ref{initalUn}. Then
    \begin{align} \label{est_gn_f}
        g_n \in C([0, T^*); H^{s - 2n}(\mathbb{R}^2)), \quad \forall\,  n \in \{2, \cdots, K\} \cap \mathbb{Z}_{\rm even}.
    \end{align}
    Moreover, there exists a WKB solution $U_a$ taking the form in \eqref{WKB_inital}, with $U_{n,p}$ satisfying all the properties in Propositions \ref{unplarge}, \ref{Un-1}, \ref{unpall}, \ref{initalUn} and Corollary \ref{poly}. In particular, for all $n = 0,\, 1,\, 2,\, \cdots,\, K + 2 $ and $p \in H_n$, the amplitudes $U_{n,p}$ satisfy %%这里也
    \begin{align} \label{regunpf}
        U_{n,p} \in C([0, T^*); H^{s - 2n}(\mathbb{R}^2)).
    \end{align}
    The initial difference satisfies
    \begin{align*}
        \|U(0, \cdot) - U_a(0, \cdot)\|_{H^{s - 2K - 2}} \leq C_{\phi, \psi} \e^{K + 2}. %%为什么这里突然要用varepsilon了啊,需要统一
    \end{align*}
\end{proposition}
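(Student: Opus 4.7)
\textbf{Proof plan for Proposition \ref{regf}.} The plan is to build the even-indexed scalar amplitudes $g_n$ through a cascade of linear Schr\"odinger problems, then algebraically assemble all $U_{n,p}$ using Propositions \ref{unplarge}--\ref{initalUn} and Corollary \ref{poly}, and finally evaluate the initial residual. The base case $n=0$ is Proposition \ref{local_wp}: the cubic NLS \eqref{ls_g_0} with initial datum $\frac{\phi-i\psi}{2}\in H^s$ admits a maximal solution $g_0\in C([0,T^*);H^s(\Rr^2))$.

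For the inductive step, fix even $n$ with $2\leq n\leq K$ and assume $g_m\in C([0,T^*);H^{s-2m}(\Rr^2))$ for every even $m<n$. In $f(u_a)_{n,1}$ we isolate the part linear in $(g_n,\bar g_n)$, namely $6\la|g_0|^2 g_n+3\la g_0^2\bar g_n$ (arising from $3\la u_0^2 u_n$), from the remaining purely lower-order polynomial piece $F'_n$. Equation \eqref{induceeq} then takes the form of a linear Schr\"odinger system
\begin{equation*}
 2i\pa_t g_n-\Dl_x g_n+6\la|g_0|^2 g_n+3\la g_0^2\bar g_n=-\pa_{tt}g_{n-2}-F'_n,
\end{equation*}
and the key verification is that its right-hand side lies in $C([0,T^*);H^{s-2n}(\Rr^2))$. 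Corollary \ref{poly} expresses $F'_n$ as a polynomial in spatial derivatives of $g_0,g_2,\ldots,g_{n-2}$ of combined order at most $n-2$; since $s-2n>1$ (guaranteed by $s>2K+5$) the Sobolev algebra property places $F'_n$ in $H^{s-2n}$. For $\pa_{tt}g_{n-2}$, we invoke the Schr\"odinger equation \eqref{induceeq} at level $n-2$ to rewrite $\pa_t g_{n-2}$ as a spatial operator in $g_0,\ldots,g_{n-2}$, differentiate once more in $t$ and iterate the substitution; each time derivative is traded for two spatial derivatives, so $\pa_{tt}g_{n-2}$ inherits the regularity of $\Dl_x^2 g_{n-2}\in H^{s-2(n-2)-4}=H^{s-2n}$. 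With $g_0\in H^s\hookrightarrow W^{1,\infty}$ providing Lipschitz potentials, standard linear Schr\"odinger energy estimates then produce a unique $g_n\in C([0,T^*);H^{s-2n}(\Rr^2))$ matching the initial datum prescribed by Proposition \ref{initalUn}.

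Once every even $g_n$, $0\leq n\leq K$, is in hand (with odd $g_n\equiv 0$ and $g_{K+2}$ chosen, e.g., identically zero), Proposition \ref{Un-1} gives closed formulas for $U_{n,\pm 1}$, the algebraic recursion \eqref{Uk+4pp} of Proposition \ref{unpall} determines $U_{n,p}$ for $3\leq |p|\leq p(n)$, and Proposition \ref{unplarge} fixes the harmonic set $H_n$. Repeated Sobolev algebra propagates the regularity $H^{s-2n}$ to each $U_{n,p}$, establishing \eqref{regunpf}. For the initial residual, Proposition \ref{initalUn} arranges $U_n(0,\cdot)=0$ for $n\in\{2,\ldots,K+1\}$, while \eqref{U0_1} and \eqref{g1=0} yield $(U_0+\e U_1)(0,\cdot)=U(0,\cdot)$, so $(U-U_a)(0,\cdot)=-\e^{K+2}U_{K+2}(0,\cdot)$. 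Because every $g_n(0,\cdot)$ is a polynomial in $\pa_x^n(\phi,\psi)$ with $\phi,\psi\in H^s$, a direct count from the formulas for $U_{K+2,p}$ gives $U_{K+2}(0,\cdot)\in H^{s-K-2}\hookrightarrow H^{s-2K-2}$, which yields the claimed bound.

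The main obstacle is the sharp derivative bookkeeping for $\pa_{tt}g_{n-2}$ in the source of the level-$n$ equation. A naive interpretation of \eqref{induceeq} suggests only a loss of two spatial derivatives per step; the correct count requires using the Schr\"odinger equation at level $n-2$ to rewrite $\pa_{tt}g_{n-2}$ as $\tfrac{1}{4}\Dl_x^2 g_{n-2}$ modulo polynomial corrections of strictly lower spatial order, thereby placing it in $H^{s-2n}$ and closing the induction at the sharp regularity $H^{s-2n}$ for $g_n$.
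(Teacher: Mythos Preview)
Your proposal is correct and follows essentially the same approach as the paper: both argue by induction on even $n$, isolate the part of $f(u_a)_{n,1}$ linear in $(g_n,\bar g_n)$ from a lower-order remainder, use the Schr\"odinger equations at lower levels to control $\pa_{tt}g_{n-2}$ in $H^{s-2n}$, and close with Duhamel/Gronwall, then read off the initial residual from Proposition \ref{initalUn}. The paper spells out the $n=2$ case explicitly as a prototype before invoking induction, whereas you go straight to the general step, but the underlying mechanism is identical.
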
  %%这个句号不能出现在这里
\begin{proof}
    By Proposition \ref{local_wp}, the cubic Schr\"{o}dinger equation \eqref{ls_g_0} admits a unique solution
    \begin{align*}
        g_0 \in C([0, T^*), H^{s}(\mathbb{R}^2)),
    \end{align*}

    By Proposition \ref{initalUn} and the fact that for $s > 1$, $H^s(\mathbb{R}^2)$ is a Banach algebra, we have
    \begin{align} \label{est_initial}
        \|g_n(0, \cdot)\|_{H^{s - n}} \leq C_{\phi, \psi}, \quad \forall\,  n \in \{2,\, \cdots,\, K\} \cap \Zz_{\rm even}.
    \end{align}
    
    In particular, the function $g_2$ satisfies the linear Schr\"{o}dinger equation \eqref{ls2} with initial data \eqref{reim}. Using Duhamel's formula, we can write the specific form of $g_2$ as%% Duhamel这个不用$$
    \begin{align} \label{duhamelg2}
        g_2(t, \cdot) = 
        e^{-\frac{ it \Delta_x}{2}} g_2(0, \cdot) + 
        \int_0^t e^{-\frac{ i(t-s) \Delta_x}{2}} \frac{i}{2} ( \pa_{t t} g_0 + f_{2,1} )(s, \cdot)  \D{s}.
    \end{align} %积分号的 dt 需要写成 \D{s}
  By applying $\d_{t}$ to the Schr\"{o}dinger equation in $g_0$, we have $\pa_{t t} g_0 \in C([0, T^*), H^{s - 4}(\mathbb{R}^2))$. According to the form of $f_{2,1}$ in \eqref{formf21}, we deduce that
    \begin{align*}
        \|f_{2,1}(s, \cdot)\|_{H^{s - 4}} &\leq 3\la \big( \|g_0^2 \bar{g}_2\|_{H^{s - 4}}  + 2 \| |g_0|^2 g_2\|_{H^{s - 4}}  + \frac{\la}{8} \||g_0|^4 g_0\|_{H^{s - 4}}  \big) \\
        &\leq 3\la \big( (\|g_0^2 \|_{H^{s - 4}}  + 2 \| |g_0|^2 \|_{H^{s - 4}}  )\|g_2\|_{H^{s - 4}}  + \frac{\la}{8} \||g_0|^4 g_0\|_{H^{s - 4}}  \big).
    \end{align*}
 As a result, 
    \begin{align} \label{estg2h}
    \begin{split}
        \|g_2( t, \cdot)\|_{H^{s - 4}} 
        &\leq \|g_2(0, \cdot)\|_{H^{s - 4}} + 
        \int_0^t \frac{1}{2}( \|\pa_{t t} g_0(s, \cdot)\|_{H^{s - 4}} + 
        \|f_{2,1}(s, \cdot)\| ) \D{s} \\
        &\leq C_{\phi, \psi} + 
        \int_0^t C_{\phi, \psi}(s) ( 1 + \|g_2(s, \cdot)\|_{H^{s - 4}} ) \D{s}.
    \end{split}
    \end{align} 
    Using Gronwall's inequality in \eqref{estg2h} gives %%gronwall不等式不需要 $$符号
    \begin{align} \label{estg2l}
        \|g_2\|_{L^\infty((0, t); H^{s- 4})} \leq C_{\phi, \psi}(t), \quad \forall \, t \in (0, T^*).
    \end{align}
    Hence, $g_2 \in C([0, T^*); H^{s - 4}(\mathbb{R}^2))$. The continuity in time follows from \eqref{duhamelg2}.

    Up to this point, we have proved \eqref{est_gn_f} with $n=2$. Then we consider the case with $n \ge 2$.
    
    By the form of $f$ in \eqref{form_ff}, we know that
    \begin{align}
    \begin{split} \label{decomposition_f}
        f(u_a)_{n,1} 
        = \la (3u_{0, 1}^2 u_{n, -1} + 6u_{0, 1} u_{0, -1} u_{n, 1}) + f(u_a)_{n,1}^{(r)},
    \end{split}
    \end{align}
    where
    \begin{align*}
        f(u_a)_{n,1}^{(r)} := \la \sum_{n_1 + n_2 + n_3 = n; n_1, n_2, n_3 \leq n-1} \sum_{p_1 + p_2 + p_3 = 1} u_{n_1, p_1} u_{n_2, p_2} u_{n_3, p_3}.
    \end{align*}
    By Proposition \ref{Un-1}, we deduce that  for all $n = 0,\, 1,\, 2,\, \cdots,\, K + 2$, there holds $u_{n, 1} = g_n$. 
    Thus
    \begin{align*}
        f(u_a)_{n,1} = \la (3g_0^2 \bar{g}_n + 6|g_0|^2 g_n) +\la \sum_{n_1 + n_2 + n_3 = n; n_1, n_2, n_3 \leq n-1} \sum_{p_1 + p_2 + p_3 = 1} u_{n_1, p_1} u_{n_2, p_2} u_{n_3, p_3}.
    \end{align*} %%这里最后两行，求和号需要改一下下标
    By Corollary \ref{poly}, we know that for all $k \leq n-1$ and $p \in H_k \cap \Zz_{\geq 3}$, $u_{k, p}$ is a polynomial in $g_0,\, g_2,\, \cdots,\, g_{k-2}$. Thus, similar to  \eqref{duhamelg2}--\eqref{estg2l}, we can use induction argument to prove that for all $n \in \{2,\, \cdots,\, K\}\cap \Zz_{\rm even}$, there holds $g_n \in C([0, T^*); H^{s - 2n}(\mathbb{R}^2))$.
    
    Imposing that $g_{K+1} = g_{K+2} = 0$ and using Propositions \ref{unplarge}--\ref{regf}, we can construct a WKB solution satisfying \eqref{regunpf}.

   Moreover, by Proposition \ref{initalUn}, the initial perturbation satisfies
    \begin{align} \label{inital per}
        U_a(0, \cdot) - U(0, \cdot) = \e^{K+2} U_{K+2}(0, \cdot).
    \end{align}
    The proof is completed. 
\end{proof}

\subsection{Regularity of WKB solutions: defocusing case}

For the defocusing case ($\la > 0$), we can construct a global WKB solution. 
\begin{proposition}\label{regd}
    Let $\la > 0$, $K \in \mathbb{Z}_{+} \cap \mathbb{Z}_{\rm even}$. Suppose $(\phi, \psi)$ satisfies \eqref{inital_datum}-\eqref{weighted} with $s > 2K + 5$. For all $n \in \{0,\, 1,\, 2,\, \cdots,\, K\} \cap \mathbb{Z}_{\rm odd}$,  let $g_n \equiv 0$.  For all $n \in \{0,\, 1,\, 2,\, \cdots,\, K\} \cap \mathbb{Z}_{\rm even}$, let $g_n$ be the solution to the Schr\"{o}dinger equation \eqref{induceeq} with initial data $g_n(0, \cdot)$ be chosen as in Proposition \ref{initalUn}. Then $g_0$ satisfies the uniform estimates and decay estimates stated in Proposition \ref{global_wp}, and 
        \begin{align*}
        g_n \in C([0, \infty); H^{s - 2n}(\mathbb{R}^2)), \quad \forall n \in \{2,\, \cdots,\, K\} \cap \mathbb{Z}_{\rm even}.
    \end{align*}%% 逗号后面别忘记空格

   Let $u_{n,p}$ be the approximate solutions defined in \eqref{WKB_inital}. Then for all $\,t\in (0,\infty)$,  there holds
 for all $n \in \{0,\, 2,\, 4,\, \cdots,\, K\}$ that
  \begin{align} \label{estd1}
        \|\pa_t^j g_n(t, \cdot)\|_{H^{s - 2n - 2j}(\mathbb{R}^2)} \leq C_{\phi, \psi} \left( 1 + (1 + t)^{\frac{n}{2} \hat{N}_{\phi,\psi} + \frac{n-2}{2}} \right), \quad \forall j \leq \frac{s}{2} - n,
    \end{align}
  and  for all  $n \in \{2,\, 4,\, \cdots,\, K + 2\}$ that
    \begin{align} \label{estd2}
    \begin{split}
         \|\pa_t^j u_{n,p}(t, \cdot)\|_{H^{s - 2n + 4 - 2j}(\mathbb{R}^2)} \leq C_{\phi, \psi} \left( 1 + (1 + t)^{\frac{n-2}{2} \hat{N}_{\phi,\psi} + \frac{n-4}{2}} \right),  \quad  \forall j \leq \frac{s}{2} - n + 2,  \,  p \geq 3.
    \end{split}
    \end{align}
\end{proposition}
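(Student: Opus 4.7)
The plan is to carry out an induction on $n$, taking advantage of the fact that the even-index profiles $g_n$ solve linear Schr\"odinger equations with potentials built from $g_0$ and source terms built from the previously constructed $g_0, g_2, \ldots, g_{n-2}$, while the higher harmonics $u_{n,p}$ with $p\ge 3$ are given algebraically by \eqref{Uk+4pp}. The base case $n=0$ is immediate from Proposition \ref{global_wp}, which supplies the uniform $H^s$ bound and the $L^\infty$ decay $\|g_0(t)\|_{L^\infty}\lesssim (1+t)^{-1}$; the latter is the key ingredient that lets the induction close on $(0,\infty)$ rather than only on compact time intervals.

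For the induction step on $g_n$ with $n\in\{2,\ldots,K\}\cap \Zz_{\rm even}$, I would rewrite the source $f(u_a)_{n,1}$ exactly as in \eqref{decomposition_f}, isolating the linear-in-$g_n$ piece
\[
V(t,x)g_n + \widetilde V(t,x)\bar g_n, \qquad V:=6\lambda|g_0|^2,\ \widetilde V:=3\lambda g_0^2,
\]
from a remainder $f(u_a)_{n,1}^{(r)}$ that depends only on the strictly lower-index profiles $g_0,g_2,\ldots,g_{n-2}$ and the algebraic amplitudes $u_{k,p}$ with $k\le n-1$. The equation \eqref{induceeq} then becomes a linear Schr\"odinger equation for $g_n$ with time-dependent potential and forcing $h_n:=\partial_{tt}g_{n-2}+f(u_a)_{n,1}^{(r)}$. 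Using Duhamel and $H^{s-2n}$ energy estimates together with the product rule in the Banach algebra $H^{s-2n}$ (which requires $s-2n>1$, granted by $s>2K+5$), one gets
\[
\|g_n(t)\|_{H^{s-2n}}\le \|g_n(0)\|_{H^{s-2n}}+C\int_0^t\bigl(\|V(\tau)\|_{L^\infty}\|g_n(\tau)\|_{H^{s-2n}}+\|h_n(\tau)\|_{H^{s-2n}}\bigr)\,\mathrm d\tau,
\]
and the decay $\|V(\tau)\|_{L^\infty}\lesssim (1+\tau)^{-2}$ renders $\int_0^\infty\|V\|_{L^\infty}\,\mathrm d\tau$ finite, so Gronwall absorbs the linear term into a global multiplicative constant. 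Consequently the growth of $\|g_n(t)\|_{H^{s-2n}}$ is controlled by $\int_0^t\|h_n(\tau)\|_{H^{s-2n}}\,\mathrm d\tau$ plus the initial size, both of which are polynomial in $t$ by the induction hypothesis; a bookkeeping argument compatible with the definition of $\hat N_{\phi,\psi}$ yields precisely \eqref{estd1} for $j=0$. Time derivatives $\partial_t^jg_n$ are then handled by differentiating \eqref{induceeq} and reading off the estimate from the PDE itself (each $\partial_t$ costs two spatial derivatives).

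Once every $g_n$ is controlled, the remaining amplitudes $u_{n,p}$ with $p\ge 3$ are handed to us explicitly by the recursion \eqref{Uk+4pp}, which is purely algebraic and involves $f(u_a)_{n-2,p}$, $\partial_t^2u_{n-4,p}$, $\Delta_xu_{n-2,p}$, and $\partial_tu_{n-2,p}$; by Corollary \ref{poly} each of these is a polynomial in $g_0,g_2,\ldots,g_{n-2}$ and their derivatives up to order $n-2$, so the estimate \eqref{estd2} (including the time derivatives) follows directly from \eqref{estd1}, the fact that $H^{s-2n+4}$ is a Banach algebra for $s>2K+5$, and the Leibniz rule applied to polynomials of controlled $H^s$-functions.

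The main obstacle is the careful bookkeeping of the polynomial exponent: one must verify that the contribution of the source term $h_n$, built from products such as $g_0^2g_{n-2}$ or $g_{n_1}g_{n_2}g_{n_3}$ with $n_1+n_2+n_3=n$, integrates in time to an $(1+t)$-power that matches $\frac{n}{2}\hat N_{\phi,\psi}+\frac{n-2}{2}$. This is a combinatorial check using the induction hypothesis $\|g_k(\tau)\|_{H^{s-2k}}\lesssim (1+\tau)^{\frac{k}{2}\hat N_{\phi,\psi}+\frac{k-2}{2}}$ for $k<n$, together with the uniform (not decaying) $H^s$ bound on $g_0$ that prevents $L^\infty$ decay from being inherited at the $H^s$ level — which is precisely why the exponents grow with $n$ rather than remaining bounded. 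The constant $\hat N_{\phi,\psi}$ from \eqref{definehatN} must be chosen large enough that this recursion closes, and everything else is a standard Duhamel--Gronwall argument.
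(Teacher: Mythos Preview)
Your inductive skeleton is the same as the paper's, but the Duhamel--Gronwall step contains a real gap. The inequality you write,
\[
\|g_n(t)\|_{H^{s-2n}}\le \|g_n(0)\|_{H^{s-2n}}+C\int_0^t\bigl(\|V(\tau)\|_{L^\infty}\|g_n(\tau)\|_{H^{s-2n}}+\|h_n(\tau)\|_{H^{s-2n}}\bigr)\,\mathrm d\tau,
\]
is false for $s-2n>0$: the $H^{s-2n}$ product estimate \eqref{classical} does \emph{not} allow both factors of $g_0$ in $|g_0|^2g_n$ to be placed in $L^\infty$. One obtains instead
\[
\|\,|g_0|^2 g_n\|_{H^{s-2n}}\le C\bigl(\|g_0\|_{L^\infty}^2\|g_n\|_{H^{s-2n}}+\|g_0\|_{L^\infty}\|g_0\|_{H^{s-2n}}\|g_n\|_{L^\infty}\bigr),
\]
and the second term, after Sobolev embedding $\|g_n\|_{L^\infty}\le C\|g_n\|_{H^{s-2n}}$, contributes a coefficient $\|g_0\|_{L^\infty}\|g_0\|_{H^{s-2n}}\sim (1+\tau)^{-1}$ in front of $\|g_n\|_{H^{s-2n}}$. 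This is \emph{not} integrable on $(0,\infty)$, so Gronwall does not give a bounded multiplicative constant; it gives
\[
\exp\Bigl(\hat N_{\phi,\psi}\int_0^t (1+\tau)^{-1}\,\mathrm d\tau\Bigr)\sim (1+t)^{\hat N_{\phi,\psi}}.
\]
This polynomial Gronwall factor is exactly the origin of $\hat N_{\phi,\psi}$ in the paper (see \eqref{estimate_def21}--\eqref{estg2fin} and \eqref{estf21hd}--\eqref{definehatN}): $\hat N_{\phi,\psi}$ is not a free parameter ``chosen large enough that this recursion closes'', it is the specific constant in the $(1+t)^{-1}$ coefficient that Gronwall turns into the growth exponent. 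Your argument as written would give $\|g_2(t)\|_{H^{s-4}}\lesssim 1+t$, which is the $L^2$ bound of Wu--Lei referenced in the introduction but is not available in $H^{s-4}$, and it would leave the stated exponent $\frac{n}{2}\hat N_{\phi,\psi}+\frac{n-2}{2}$ in \eqref{estd1} unexplained. Once you replace $\|V\|_{L^\infty}$ by the correct $(1+\tau)^{-1}$ coefficient and track the resulting $(1+t)^{\hat N_{\phi,\psi}}$ Gronwall factor through the induction, the rest of your outline (time derivatives via the equation, algebraic recursion \eqref{Uk+4pp} for $u_{n,p}$) goes through as in the paper.
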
 
\begin{proof}
   By Proposition \ref{global_wp}, we know that $g_0 \in L^\infty((0, \infty); H^{s}(\mathbb{R}^2))$ and as $t \to \infty$, $\|g_0(t)\|_{L^\infty(\mathbb{R}^2)}$ decays with rate $t^{-1}$. The continuity of $g_n$ in time follows from equation \eqref{induceeq} naturally. Then it is sufficient to prove estimates \eqref{estd1} and \eqref{estd2}. We will use induction to prove our desired estimates.

Using equation \eqref{ls_g_0} gives
    \begin{align*}
        \|\pa_t g_0 (0, \cdot)\|_{H^{s-2}} \leq C \|\Delta_x g_0 (t, \cdot)\|_{H^{s-2}} + C \| |g_0(t, \cdot)|^2 g_0(t, \cdot) \|_{H^{s-2}} \leq C_{\phi, \psi}.
    \end{align*}
    Here and in the sequel, we use repeatedly the classical estimates
       \begin{align} \label{classical}
     \begin{split}
         &\|u v\|_{H^s} \leq M(s) ( \|u\|_{H^s} \|v\|_{L^\infty} + \|u\|_{L^\infty} \|v\|_{H^s} ), \quad \forall \, s >  0, \\
        &\|u\|_{L^\infty} \leq M(s-1 ) \|u\|_{H^s}, \quad \forall \, s > 1,
     \end{split}   
    \end{align}
   where the constant $M(s)$  is decreasing in $(0, \infty)$ and $M(s) \to +\infty$ as $s \to 0^+$.

    Applying $\pa_t^j$ to equation \eqref{ls_g_0} and using continuation argument, we have
    \begin{align} \label{estg0}
        \|\pa_t^j g_0(t, \cdot)\|_{H^{s - 2j}} \leq C_{\phi, \psi}, \quad \forall \, j \leq \frac{s}{2}.
    \end{align}
    This is exactly \eqref{estd1} with $n = 0$.
    
    Then we consider the estimate of $g_2$. Using Duhamel's formula in equations \eqref{ls2} and \eqref{reim}, we arrive at an estimate as \eqref{estg2h}
    \begin{align} \label{estg2hd}
    \begin{split}
        \|g_2(t, \cdot)\|_{H^{s - 4}} \leq &\|g_2(0, \cdot)\|_{H^{s - 4}} 
        +\int_0^t \frac{1}{2} ( \|\pa_{t t} g_0(s, \cdot)\|_{H^{s - 4}} 
        + \|f_{2,1}(s, \cdot)\|_{H^{s - 4}} )  \D{s} \\
        \leq &C_{\phi, \psi}(1+t) + \int_0^t \frac{1}{2}  \|f_{2,1}(s, \cdot)\|_{H^{s - 4}}  \, \D{s}, \quad \forall\, t \in (0, \infty).
    \end{split}
    \end{align}
     We have already got the specific form of  $f_{2,1}$ in \eqref{formf21}. Using Proposition \ref{global_wp} along with \eqref{classical}, it follows that
     \begin{align} \label{estimate_def21}
     \begin{split}
         \|f_{2,1}(t, \cdot)\|_{H^{s - 4}} 
         \leq &3\la \big( \|g_0^2 \bar{g}_2\|_{H^{s - 4}} + 2\||g_0|^2 g_2\|_{H^{s - 4}} + \frac{\la}{8} \||g_0|^4 g_0\|_{H^{s - 4}} \big) \\
         \leq & 9\la \tilde{M}(s - 5) \|g_0\|_{H^{s - 4}} \|g_0\|_{L^\infty} \|g_2\|_{H^{s - 4}} + C\|g_0\|_{H^{s-4}} \|g_0\|^4_{L^\infty} \\
         \le & 2N_1 (1 + t)^{-1} \|g_2(t, \cdot)\|_{H^{s - 4}}  + C_{\phi, \psi} (1 + t)^{-4},
    \end{split}
     \end{align}
     where 
     \begin{align} \label{def_N1}
         N_1=\frac{1}{2}\max \{2, 9\la \tilde{M}(s - 5) C_1(\phi, \psi, s - 4) C_2(\phi, \psi)\}.
     \end{align}
      Here $C_1(\phi, \psi, s - 4)$, $C_2(\phi, \psi)$ are defined in Proposition  \ref{global_wp} and $\tilde{M}(s)$ enjoys the same property as $M(s)$: $\tilde M(s) \to +\infty$ as $s \to 0^+$.
    
     \begin{remark}
         We impose a lower bound  $N_1 \geq 1$ to simplify the computation below. In fact, this lower bound holds in most cases, unless the initial data satisfy certain  smallness condition.
     \end{remark}
     From \eqref{estg2hd} and \eqref{estimate_def21}, we can write the estimate of $g_2$ as
     \begin{align*} 
     \begin{split}
         \|g_2(t, \cdot)\|_{H^{s - 4}} 
         &\leq C_{\phi, \psi} (1 + t) + \int_0^t N_1 (1 + t')^{-1} \|g_2(t', \cdot)\| _{H^{s-4}} + C_{\phi, \psi} (1 + t')^{-4} \D{t'} \\
         &\leq C_{\phi, \psi} (1 + t) + \int_0^t N_1 (1 + t')^{-1} \|g_2(t', \cdot)\|_{H^{s - 4}} \, \D{t'}.
     \end{split}
     \end{align*}
     Using Gronwall's inequality gives
     \begin{align} \label{estg2fin}
     \begin{split}
                  \|g_2(t, \cdot)\|_{H^{s - 4}} 
         &\leq C_{\phi, \psi} (1 + t) + \int_0^t C_{\phi, \psi} \left( \frac{ 1 + t } { 1 + t' }\right)^{N_1} \D{t'}\\
       &  \leq C_{\phi, \psi} (1 + t)^{N_1},
     \end{split}
     \end{align}
 which is \eqref{estd1} with $n = 2$, $j = 0$, with $\hat{N}_{\phi,\psi} \geq  N_1$.
    
     We use the Schr\"{o}dinger equation \eqref{ls2} to derive
     \begin{align} \label{g_2_s-6}
     \begin{split}
         \|\pa_t g_2(t, \cdot)\|_{H^{s - 6}} 
         &\leq C ( \|\pa_{t t} g_0(t, \cdot)\|_{H^{s - 6}} + \|\Delta_x g_2(t, \cdot)\|_{H^{s - 6}} + \|f_{2,1}(t, \cdot)\|_{H^{s - 6}} ) \\
         &\leq C_{\phi, \psi}  (1 + t)^{\hat{N}_{\phi,\psi}}.
     \end{split}
     \end{align}
     Applying $\pa_t^j$ to \eqref{ls2} and using induction argument gives
     \begin{align*}
          \|\pa_t^j g_2(t, \cdot)\|_{H^{s - 4 - 2j}} \leq C_{\phi, \psi} (1 + t)^{\hat{N}_{\phi,\psi}}, \quad \forall \, j \leq \frac{s}{2} - 2.
     \end{align*}
     This is \eqref{estd1} with $n = 2$.
     By \eqref{U2form}, \eqref{classical} and Proposition \ref{global_wp}, there holds
     \begin{align*}
         \|u_{2,3}(t, \cdot)\|_{H^{s}} 
         \leq \frac{\la}{8} \left \|g_{0}^{3}(t, \cdot) \right \|_{H^{s}} 
         \leq C \left \|g_{0}(t, \cdot) \right \|_{L^{\infty}}^{2} \left \|g_{0}(t, \cdot) \right \|_{H^{s}} 
         \leq C_{\phi, \psi}(1+t)^{-2}.
     \end{align*}
     By the estimate of $\pa_t^j g_0(t,\cdot)$ in \eqref{estg0}, we can get
     \begin{align*}
 \| \pa_t^j u_{2,3}(t, \cdot)\|_{H^{s-2j}} 
         \leq  C_{\phi,\psi}.
     \end{align*}
      Similarly, by \eqref{U43form} and \eqref{U45form}, these findings are presented
     \begin{align*}
         \left \| \pa _ {t}^{j} u_{4,3}(t, \cdot) \right \|_{H^{s-4-2j}} 
         &\leq C_{\phi, \psi}(1 + t)^{\hat{N}_{\phi, \psi}}, 
          \quad \forall j \leq \frac{s}{2}-2, \\ 
         \|\pa_{t}^{j} u_{4,5}(t, \cdot)\|_{H^{s-4-2j}} 
         &\leq C_{\phi, \psi}, \quad \forall\, j \leq \frac{s}{2}.    
     \end{align*}
     Now, we have proved \eqref{estd1} with $n =0,\, 2$  and \eqref{estd2} with $n = 0,\, 2,\, 4$. Let $k \geq 2$ be an even integer. Assume that \eqref{estd1} holds for $n \leq k$ and \eqref{estd2} holds for $n \leq k + 2$. Then we will prove  \eqref{estd1} for $n = k + 2$ and \eqref{estd2} for $n = k + 4$. 
     
  Recall the equation of $g_{k + 2}$ in \eqref{induceeq}. The nonlinearity $f(u_a)_{k+2,1}$ takes the form in \eqref{decomposition_f}, with $f (u _ {a} ) _ { k + 2 , 1 } ^ {(0)}:=3 \la ( g _ {0} ^ {2} \bar{g} _ {k+2} + 2 \left| g _ {0} \right| ^ {2} g_{k+2})$.
 Here we adopt the conclusion in Proposition \ref{unpall} that $g_n = 0$ for all odd integers $n$. 

     Similar to \eqref{estimate_def21}, we can get that for all $n \in \{2,\, 4,\, \cdots,\, K\}$, 
     \begin{align*}
     \begin{split}
         &\|f(u_a)_{n,1}^{(0)}(t, \cdot)\|_{H^{s - 2n}} \leq 3\la ( \|g_0^2 \bar{g}_n\|_{H^{s - 2n}} + 2\||g_0|^2 g_n\|_{H^{s - 2n}} ) \\
         &\le 9\la \tilde{M}(s - 2n - 1) \|g_0\|_{H^{s - 2n}} \|g_0\|_{L^\infty} \|g_n\|_{H^{s - 2n}} \\
         &\le 9\la \tilde{M}(s - 2n - 1) C_1(\phi, \psi, s - 2n) C_2(\phi, \psi) (1 + t)^{-1} \|g_n(t, \cdot)\|_{H^{s - 2n}},
     \end{split}
     \end{align*} %%逗号不要写在这里
     where $\tilde{M}(s - 2n - 1)$, $C_1(\phi, \psi, s - 2n)$, $C_2(\phi, \psi)$ are defined as in \eqref{estimate_def21}. Since $s > 2K + 5$, we can deduce
     \begin{align*}
         \tilde M(s - 2n - 1) \leq \tilde M(s - 2K - 1) \leq \tilde M(4) < \infty, \quad \forall \, n = 2,\, 4,\, \cdots,\, K.
     \end{align*}
     Thus
     \begin{align} \label{estf21hd}
         \|f(u_a)_{n,1}^{(0)}(t, \cdot)\|_{H^{s - 2n}} \leq 2 \hat{N}_{\phi,\psi} (1 + t)^{-1} \|g_n(t, \cdot)\|_{H^{s - 2n}}, \quad \forall \, n = 2,\, 4,\, \cdots,\, K,
     \end{align}
   with $\hat{N}_{\phi,\psi}$ defined as
     \begin{align} \label{definehatN}
         \hat{N}_{\phi,\psi} = \frac{1}{2} \max \big\{2, 9\lambda \tilde{M}(s - 2n - 1) C_1(\phi, \psi, s - 2n) C_2(\phi, \psi)| n = 2\cdots, K \big\}.
     \end{align}
     Then $\hat{N}_{\phi,\psi} \ge N_1$ holds naturally, where $N_1$ is defined in \eqref{def_N1}.
     
     Note that in $ f\left ( u _ {a} \right) _ { k + 2 , 1} ^ {(r)}$,  at most one $n_i$ equals $0$. Thus we have the decomposition below
     \begin{align*} 
          f (u _ {a})_{k + 2 , 1}^{(r)}:= & 3 \la \sum_{n_{1}+n_{2}=k+2 ; 2 \leq n_{1}, n_{2} \leq k} \sum_{p_{1}+p_{2}+p_{3}=1} u_{n_{1}, p_{1}} u_{n_{2}, p_{2}} u_{0, p_{3}} \\ 
          & +\la \sum_{n_{1}+n_{2}+n_{3}=k+2; 2 \leq n_{1}, n_{2}, n_{3} \leq k} \sum_{p_{1}+p_{2}+p_{3}=1} u_{n_{1}, p_{1}} u_{n_{2}, p_{2}} u_{n_{3}, p_{3}}.
    \end{align*}
    By induction assumption, for all $(n_1,\,n_2)$ with $n_1 + n_2 = k+2, 2 \leq n_1,\, n_2 \leq k$, there holds
    \begin{align} \label{remainder_1}
    \begin{split}
        &\|u_{n_{1}, p_{1}}(t, \cdot) u_{n_{2}, p_{2}}(t, \cdot) u_{0, p_{3}}(t, \cdot)\|_{H^{s-2 k}} \\ 
         &\leq C\|u_{n_{1}, p_{1}}(t, \cdot)\| _ {H ^ {s-2 k}}\|u_{n_{2}, p_{2}}(t, \cdot)\|_{H^{s-2 k}}\|u_{0, p_{3}}(t, \cdot)\|_{H^{s-2 k}} \\ 
        &\leq C_{\phi, \psi}(1+t)^{\frac{n_{1}+n_{2}}{2} \hat{N}_{\phi, \psi}+\frac{n_{1}+n_{2}-4}{2}} \\ 
        &\le C_{\phi, \psi}(1+t)^{\frac{k+2}{2} \hat{N}_{\phi, \psi}+\frac{k-2}{2}}.
    \end{split}
    \end{align}
    While for all $(n_1,\,n_2,\,n_3)$ with $n_1 + n_2 + n_3 = k+2, 2 \leq n_1,\, n_2,\, n_3 \leq k$, there holds
    \begin{align} \label{remainder_2}
    \begin{split}
           &\|u_{n_{1}, p_{1}}(t, \cdot) u_{n_{2}, p_{2}}(t, \cdot) u_{n_{3}, p_{3}}(t, \cdot)\|_{H^{s-2 k}} \\
        &\leq C\|u_{n_{1}, p_{1}}(t, \cdot)\|_{H^{s-2 k}}\|u_{n_{2}, p_{2}}(t, \cdot)\|_{H^{s-2 k}}\|u_{n_{3}, p_{3}}(t, \cdot)\|_{H^{s-2 k}} \\ 
        &\leq C_{\phi, \psi}(1+t)^{\frac{n_{1}+n_{2}+n_{3}}{2} \hat{N}_{\phi, \psi}+\frac{n_{1}+n_{2}+n_{3}-6}{2}} \\ 
        &\leq C_{\phi, \psi}(1+t)^{\frac{k+2}{2} \hat{N}_{\phi, \psi}+\frac{k-4}{2}}.
    \end{split}
    \end{align}
    Using the above two estimates \eqref{remainder_1} --\eqref{remainder_2} gives
    \begin{align} \label{est_fuak+21}
    \|f(u_{a})_{k + 2,1}^{(r)} \| _ {H^{s-2 k}} \leq C_{\phi, \psi}(1+t)^{\frac{k+2}{2} \hat{N}_{\phi, \psi} + \frac{k-2}{2}}.
    \end{align}
    Using the estimate of $f (u _ {a} ) _ { k + 2 , 1 } $ obtained from \eqref{estf21hd}, \eqref{decomposition_f}, \eqref{est_fuak+21} and following from \eqref{estg2fin}, we can get \eqref{estd1} with $n = k + 2,\, j = 0$, which is
    \begin{align} \label{gk+2est1}
        \| g _ { k + 2} (t, \cdot) \| _ {H^{s - 2(k+2)}}  
        \leq C _ {\phi, \psi}( 1 + t )^{\frac{k+2} {2} \hat{N}_{\phi, \psi}+\frac{k}{2}}.
    \end{align}
    In addition, the estimate of time derivatives follows from \eqref{induceeq} and is of  the form  \eqref{g_2_s-6}.
    By induction assumption, we have
    \begin{align*}
        \| \pa_{tt} g _ {k} (t, \cdot) \| _ {H^{ s-2k-6}} 
        \leq C_{\phi, \psi} (1 + t) ^{\frac{k}{2} \hat{N}_{\phi, \psi} + \frac{k-2}{2}} .
    \end{align*}
    From \eqref{gk+2est1}, we can get
    \begin{align*}
        \| \Delta g _ {k + 2}(t, \cdot) \| _ {H^{s - 2k - 6}} 
        &\leq \|g_{k + 2}(t, \cdot) \| _ {H^{s- 2k -4}} \leq C_{\phi, \psi}  (1 + t)^{\frac{k+2} {2} \hat{N}_{\phi, \psi}+\frac{k}{2}}.
    \end{align*}
    Using \eqref{decomposition_f}, \eqref{est_fuak+21} and combining with \eqref{gk+2est1}, we arrive at
    \begin{align*}
        \| f ( u _ {a} ) _ {k + 2 , 1}(t, \cdot) \| _ {H^{s - 2k - 6}} 
        \leq C _ {\phi, \psi} (1 + t) ^ {\frac{k + 2}{2} \hat{N} _ {\phi, \psi} + \frac{k} {2}} .
    \end{align*}
    To summarize,
    \begin{align*}
        \| \pa_{t} g_{k + 2}(t, \cdot) \| _ {H^{s - 2(k +2 ) - 2}} \leq C_{\phi, \psi} (1 + t)^{\frac{k + 2}{2} \hat{N}_{\phi, \psi}+\frac{k}{2}}.
    \end{align*}
    We can apply $\pa_t^j$ to \eqref{induceeq}  and use induction argument to derive
    \begin{align*}
        \|\pa_t^j g_n(t, \cdot)\|_{H^{s - 2k - 4 - 2j}(\mathbb{R}^2)} \leq C_{\phi, \psi} \left( 1 + (1 + t)^{\frac{k+2}{2} \hat{N}_{\phi,\psi} + \frac{k}{2}} \right).
    \end{align*}
    Then we show \eqref{estd2} with $n = k + 4$.    Again by induction assumption, we can get estimates of terms in the right-hand side of \eqref{Uk+4pp}:
    \begin{align*}
        &\|\pa_{t t} u_{k, p}\| _ {H^{s - 2(k + 4) + 4}} 
         \leq C _ {\phi, \psi} \left(1 +  (1 + t) ^ {\frac{k - 2}{2} \hat{N}_{\phi, \psi} + \frac{(k - 4)}{2}}\right), \\ 
        &\| \Delta_{x} u_{k + 2, p}\| _ {H^{s-2(k + 4) + 4}} 
         \leq C_{\phi, \psi} \left(1 +  (1 + t)^{\frac{k}{2} \hat{N}_{\phi, \psi} + \frac{(k - 2)}{2}}\right), \\ 
        &\| \pa_{t} u_{k + 2, p}\| _ {H^{s - 2(k + 4) + 4}} 
         \leq C_{\phi, \psi} \left(1  + (1 + t)^{\frac{k}{2} \hat{N}_{{\phi, \psi}} + \frac{(k-2)}{2}}\right) .
    \end{align*}
    Similar to the decomposition in \eqref{decomposition_f}, the  induction assumption and \eqref{gk+2est1} implies that
    \begin{align*}
        \| f ( u _ {a}) _ {k + 2, p} \| _ {H^{s  - 2(k + 4) + 4}} \leq C_{\phi, \psi} (1 + t)^{\frac{k + 2}{2} \hat{N}_{\phi, \psi} + \frac{k}{2}}.
    \end{align*}
    It follows that
    \begin{align*}
        \| u _ {k + 4, p} \| _ {H^{s - 2(k + 4) + 4}} \leq C_{\phi, \psi} (1 + t)^{\frac{k + 2}{2} \hat{N}_{\phi, \psi} + \frac{k}{2}}, \quad \forall \,p \geq 3.
    \end{align*}
    This is exactly \eqref{estd2} with $n = k + 4, j = 0$. The estimate of time derivatives follows similarly by induction argument. We complete the proof.
\end{proof}
Now we give a direct corollary of Propositions \ref{regd} and \ref{unpall}.
\begin{corollary} \label{existence}
    Under the assumptions in Proposition \ref{regd}, there exists a WKB solution $U_a$ of form \eqref{WKB_inital} with $U_{n,p} \in C([0, \infty); H^{s - 2n}(\mathbb{R}^2))$ satisfying all the properties in Propositions \ref{unplarge}, \ref{Un-1}, \ref{unpall}, \ref{initalUn}, \ref{regd} and Corollary \ref{poly}. In particular, for all $t \in (0, \infty)$ %% 这里最后写成 \begin{enumerate}
    \begin{align*}
         &\| U _ {n, 1}(t, \cdot) \| _ {H^{s -2n}} \leq C_{\phi, \psi} \left( 1  + ( 1 + t ) ^ {\frac{n}{2} \hat{N } _{\phi, \psi} + \frac{(n-2)}{2}} \right),\quad\forall \, n \in \left\{ 1,\, 2,\, \cdots,\, K+2\right\} \cap \mathbb{Z}_{\rm even}, \\
         &\|U _ {n, 1}(t, \cdot)\| _ {H^{s -2n + 1 } } 
         \leq C_{\phi, \psi} \left( 1 +(1 + t) ^{\frac{n - 1}{2} \hat{N}_{\phi, \psi}+\frac{(n-3)}{2}}\right), \quad\forall \, n \in \left\{1,\, 2,\, \cdots, K + 2\right\} \cap \mathbb{Z} _ {\rm odd}, \\
         &\| U_{n, p} (t, \cdot) \| _ {H^{s-2n + 4 }} 
         \leq C _ {\phi, \psi} \left( 1 + (1 + t)^{\frac{n - 2}{2} \hat{N}_{\phi, \psi} + \frac{(n - 4)}{2}}\right), \quad\forall \, p \geq 3,  n \in\left\{1,\, 2,\, \cdots,\, K+2\right\} \cap \mathbb{Z}_{\rm even}, \\
         &\| U _ {n, p} (t, \cdot) \| _ {H^{s - 2n + 5}} 
         \leq C_{\phi, \psi} \left( 1 + (1 + t )^{\frac{n - 3}{2} \hat{N}_{\phi, \psi} + \frac{(n - 5)}{2}}\right),  \quad\forall \, p \geq 3, n \in \left\{1,\, 2,\, \cdots,\,  K+2\right\} \cap \mathbb{Z}_{\rm odd}.
    \end{align*} %% 公式一般需要左边对齐
   % Additionally, the initial perturbation satisfies
    %\begin{align*}
   %     \|U(0, \cdot) - U_a(0, \cdot)\| _ {H^{s - 2K - 2}} \leq C_{\phi, \psi} {\color{red} \e^{K + 2} }.
   % \end{align*}
\end{corollary}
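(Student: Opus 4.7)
The plan is to assemble the WKB solution $U_a = \sum_{n=0}^{K+2}\varepsilon^n U_n$ componentwise out of the ingredients already prepared: the scalar profiles $g_n$ controlled by Proposition \ref{regd}, the structural formulas for $U_{n,p}$ from Propositions \ref{unplarge}, \ref{Un-1}, \ref{unpall}, and the polynomial representation in Corollary \ref{poly}. Once this is set up, the four estimates in the corollary are a bookkeeping consequence: for each $(n,p)$ one simply replaces every factor in the explicit formula by the $H^{s-2n}$ (or $H^{s-2n+4}$) bound supplied by Proposition \ref{regd}, and checks that the dominant growth factor matches the claim. Regularity in time is inherited from the continuity statements in Propositions \ref{regd} and \ref{unpall}.

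For the leading harmonic $p=1$ with even $n \in \{2,\dots,K+2\}$, Proposition \ref{Un-1} gives $U_{n,1} = g_n e^+ + (0,0,\partial_t g_{n-2},0)^{\mathrm T}$ (using $g_{n-1}\equiv 0$ from Proposition \ref{unpall}). I would then apply the $j=0$ case of \eqref{estd1} to $g_n$ and the $j=1$ case to $g_{n-2}$, yielding
\begin{equation*}
\|U_{n,1}(t,\cdot)\|_{H^{s-2n}} \le \|g_n(t,\cdot)\|_{H^{s-2n}} + \|\partial_t g_{n-2}(t,\cdot)\|_{H^{s-2n}} \le C_{\phi,\psi}\bigl(1+(1+t)^{\frac{n}{2}\hat N_{\phi,\psi}+\frac{n-2}{2}}\bigr),
\end{equation*}
the first term being dominant since $\hat N_{\phi,\psi}\ge 1$. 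For odd $n$, Proposition \ref{unpall} forces $g_n=0$, leaving $U_{n,1}=(\nabla g_{n-1},0,0)^{\mathrm T}$. One extra spatial derivative is paid, so I estimate in $H^{s-2n+1}$ via $\|\nabla g_{n-1}\|_{H^{s-2n+1}} \le \|g_{n-1}\|_{H^{s-2(n-1)}}$ and invoke \eqref{estd1} with index $n-1$.

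For the higher harmonics $p\ge 3$ and even $n$, I use the representation $U_{n,p}=(0,\partial_t u_{n-2,p}+ip\,u_{n,p},u_{n,p})^{\mathrm T}$ from Proposition \ref{unpall}. Then \eqref{estd2} with $j=0$ controls $\|u_{n,p}\|_{H^{s-2n+4}}$ by the announced growth rate, while the same estimate with $j=1$ and index shifted to $n-2$ controls $\|\partial_t u_{n-2,p}\|_{H^{s-2n+6}}$, which is strictly stronger than what we need. The odd-$n$, $p\ge 3$ case is analogous: $U_{n,p}=(\nabla u_{n-1,p},0,0)^{\mathrm T}$, so losing one spatial derivative reduces the claim to the even-$n$ estimate for $u_{n-1,p}$ in $H^{s-2(n-1)+4}$.

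The only non-mechanical point is organizing the arithmetic of the indices so that the exponent of $(1+t)$ and the Sobolev index on the right-hand side of Proposition \ref{regd} line up with the four cases of the corollary; there is no new analysis required, because all the inductive growth has already been absorbed into \eqref{estd1}--\eqref{estd2}. The \emph{main obstacle}, such as it is, is simply verifying in each of the four cases that the dominant term in the explicit expression for $U_{n,p}$ produces precisely the stated exponent (rather than something larger), which is guaranteed by the monotonicity $\frac{n}{2}\hat N_{\phi,\psi}+\frac{n-2}{2} \ge \frac{n-2}{2}\hat N_{\phi,\psi}+\frac{n-4}{2}$ whenever $\hat N_{\phi,\psi}\ge 1$.
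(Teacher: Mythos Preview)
Your proposal is correct and follows exactly the approach the paper intends: the paper presents this as a direct corollary of Propositions \ref{regd} and \ref{unpall} with no written proof, and your argument---reading off the explicit formulas for $U_{n,p}$ from Propositions \ref{Un-1} and \ref{unpall} and plugging in the bounds \eqref{estd1}--\eqref{estd2} from Proposition \ref{regd}---is precisely the computation that substantiates that claim. The case-by-case verification you outline (even/odd $n$, $p=1$ versus $p\ge 3$) is accurate, including the index-shifting and the identification of the dominant term via $\hat N_{\phi,\psi}\ge 1$.
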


\subsection{Proof of Theorems \ref{f_remainder_estimate} and \ref{d_remainder_estimate}}

Now we prove Theorems \ref{f_remainder_estimate} and \ref{d_remainder_estimate}. Note that $U_a$ and $U_{n,p}$ take the same form in both focusing and defocusing case. The main difference is that $U_a$ can be globally constructed in the defocusing case. For the WKB solution $U_a$ in Propositions \ref{regf} and \ref{regd}, $\Phi_{n,p} = 0$ holds for all $n = -2,\, -1,\, 0,\, \cdots,\, K$ and $p \in \mathbb{Z}$. Then
\begin{align*}
    \pa_{t} \left (\sum_{n=0}^{K} \e^{n} U _ { n } \right) + \left( \sum_{n = K + 1}^{K + 2} \e^{n - 2} \sum_{p \in { {H}_{n}}} e ^ {\frac{ipt}{\e^{2}}} ( ip + U_{n, p}) \right) - \frac{1} {\e} A ( \pa _ {x} ) \left( \sum_{n = 0}^{K + 1} \e^{n} U_{n}\right) \\
    +\frac{1} {\e^{2}} A_{0} \left( \sum_{n = 0}^{K + 2} \e^{n} U _ {n} \right) = \sum_{n = 0}^{K} \e^{n} F(U_{a} )_{n}.
\end{align*}
    Here $U_a$ satisfies the equation %%这些超过的地方都不行
    \begin{align*}
        \pa_{t} U_{a} - \frac{1} {\e} A ( \pa _ {x}) U _ {a} + \frac{1}{\e^{2}} A_{0} U_{a} = F ( U _ {a} ) - \e^{K + 1} R _ {\e},
    \end{align*}
    with
    \begin{align*}
        R_{\e} := &\sum_{p \in H_{K + 2}} e ^ {\frac{ipt} {\e ^ {2}}} (\pa_{t} U _ {K  + 1, p} + \e \pa_{t} U_{K + 2, p} ) -A( \pa_{x} ) U_{K + 2}- \sum_{n = K + 2} ^ {3(K +2 )} \e^{n - K - 1} F(U_{a})_{n}.
    \end{align*}
    As shown in \eqref{inital per}, we have
    \begin{align*}
        U_{a} (0, \cdot) = U(0, \cdot) -  \e^{K + 2} r _ {\e}, \quad \text{where } r_{\e}:= - U_{K + 2}(0, \cdot).
    \end{align*}
    The estimates of $R_\e$ and $r_\e$ are given by the results in Propositions \ref{regf} and \ref{regd}.

\section{Stability of WKB solutions}\label{sec-stability}

In this section, we will show the stability of WKB solutions and prove Theorems \ref{f_ex}, \ref{d_ex_h} and \ref{d_ex_l}. We begin by introducing
\begin{align*} %%一般也还是用begin pmatrix
    \dot{U} = \begin{pmatrix}
      \dot{w} \\ 
      \dot{v} \\ 
      \dot{u}
    \end{pmatrix}: = \frac{U - U_{a}}{\e^{K+1}}.
\end{align*}
Then $\dot{U}$ satisfies the equation
\begin{align} \label{eq_dotu}
    \begin{cases}
      \pa_t \dot{U} - \frac{1}{\e} A(\pa_x) \dot{U} + \frac{1}{\e^2} A_0 \dot{U} = \frac{1}{\e^{K + 1}} ( F(U) - F(U_a) ) + R_\e, \\
      \dot{U}(0, \cdot) =   \e r_\e,
\end{cases}
\end{align}
where $R_\e$ and $r_\e$ satisfy the estimates in Theorem \ref{f_remainder_estimate} for the focusing case and Theorem  \ref{d_remainder_estimate} for the defocusing case. Observe that
\begin{align} \label{ob01}
\begin{split}
    \frac{1}{\e^{K+1}} ( f(u) - f(u_a) ) &=\la ( 3u_a^2 \dot{u} + 3\e^{K+1} u_a \dot{u}^2 + \e^{2(K+1)} \dot{u}^3 ).
\end{split}
\end{align}
We impose the following condition in the sequel
\begin{align*}
    \tilde{s} = s - 2K - 4 > 1.
\end{align*}

\subsection{Proof of Theorem \ref{f_ex}}

We start with the focusing case $\la < 0$. Recall that $T^*$ is the existence time of \eqref{ls_g_0}. Since \eqref{eq_dotu} is a symmetric hyperbolic system, there exists a unique solution $\dot{U} \in C([0, T^*_\e); H^{\tilde{s}})$ with $T^*_\e$ be the existence time of the solution. For all $t \in (0, \min\{T^*, T^*_\e\})$,  Duhamel's formula gives
\begin{align} \label{formUdot}
\begin{split}
    \dot{U}(t, \cdot) = 
    S(t) r_\e + \int_0 ^ t S(t - t') \frac{1}{\e^{K + 1}} ( F(U) - F(U_a) )(t', \cdot) \, \D{t'} + \int_0^t S(t - t') R_\e(t', \cdot) \, \D{t'},
\end{split}
\end{align}
with
\begin{align*}
    S(t) := \exp \left( ( \frac{1}{\e} A(\pa_x) - \frac{1}{\e^2} A_0 ) t \right).
\end{align*}
By the symmetry of $A(\pa_x)$ and $A_0$, we know that $S(t)$ is unitary from $H^s$ to $H^s$. Hence, by \eqref{ob01}, we have
\begin{align*}
    \|\dot{U}(t, \cdot)\|_{H^{\tilde{s}}} 
    \leq &\|r_\e\| _ {H^{\tilde{s}}} + \int_0^t \|R_\e(t', \cdot)\|_{H^{\tilde{s}}} \, \D{t'} \\
    &+ \int_0^t C ( \|u_a\|^2_{H^{\tilde{s}}} + \e^{K+1} \|u_a\| _ {H^{\tilde{s}}} \|\dot{u}\|_{H^{\tilde{s}}} + \e^{2(K+1)} \|\dot{u}\|^2_{H^{\tilde{s}}} ) \|\dot{u}\|_{H^{\tilde{s}}} \, \D{t'}.
\end{align*}
Using the estimates in Theorem \ref{f_remainder_estimate} and Proposition \ref{regf} gives
\begin{align*}
    \|\dot{U}(t, \cdot)\|_{H^{\tilde{s}}} 
    \leq C_{\phi, \psi} + t C_{\phi, \psi}(t) 
    + \int_0^t ( C_{\phi, \psi}(t') + C_{\phi, \psi}(t') \e^{K+1} \|\dot{u}\|_{H^{\tilde{s}}} + C \e^{2(K+1)} \|\dot{u}\|^2_{H^{\tilde{s}}} ) \|\dot{u}\|_{H^{\tilde{s}}} \, \D{t'}.
\end{align*}
We will show that $\liminf_{\e \to 0} T^*_\e \geq T^*$. Let $T < T^*$. It suffices to show there exists $\e_0 > 0$ such that for all $\e$ with $0 < \e < \e_0$, there holds $T^*_\e \geq T$. 
Define
\begin{align*}
    M(T) := ( C_{\phi, \psi} + T C_{\phi, \psi}(T) ) e^ { 2 T  C_{\phi, \psi}(T) + 2TC  },\quad\tilde{T}_1 := \sup \left\{ t < \min\{T, T^*_\e\} : \|\dot{U}\|_{L^\infty(0, t; H^{\tilde{s}})} \leq M(T) \right\} .
\end{align*}
Using Gronwall's inequality, we deduce
\begin{align*} 
\begin{split}
    \|\dot{U}\|_{L^\infty(0, t; H^{\tilde{s}})} 
    \leq &( C_{\phi, \psi} + t C_{\phi, \psi}(t) ) 
    e^{  t C_{\phi, \psi}(t) ( 1 + \e^{K + 1} \|\dot{u}\|_{L^\infty(0, t; H^{\tilde{s}})} ) + t C \e^{2(K+1)} \|\dot{u}\|^2_{L^\infty(0, t; H^{\tilde{s}})}} \\
    \leq &( C_{\phi, \psi} + T C_{\phi, \psi}(T) ) 
    e^{  T C_{\phi, \psi}(T) ( 1 + \e^{K+1} M(T) ) + T C \e^{2(K+1)} M(T)^2},\quad \forall \, t <\tilde{T}_1.
\end{split}
\end{align*}
Suppose $\e_0 > 0$ is small enough such that
\begin{align*}
    \e_0^{K+1} M(T) + \e_0^{2(K+1)} M(T)^2 = \frac{1}{2}.
\end{align*}
Then for all $\e$ with $0 < \e < \e_0$, there holds
\begin{align*}
    \|\dot{U}\|_{L^\infty(0, t; H^{\tilde{s}})} \leq ( C_{\phi, \psi} + T C_{\phi, \psi}(T) ) e^ {\frac{3}{2} T C_{\phi, \psi}(T) + \frac{1}{2} T C} \leq  M(T) -C(T),
\end{align*}
for some $C(T) >0$.  We thus can conclude that $$\tilde{T}_1  = \min\{T, T^*_\e\}.$$ 
Otherwise,  we may apply the continuity of $U$ on $[0, \tilde T]$ to imply $$\|\dot{U}\|_{L^\infty(0, \tilde T_{1}; H^{\tilde{s}})} \leq M(T)-C(T),$$ which contradicts to the definition of $\tilde{T}_1$. 

Therefore, for all $\e$ with $0 < \e < \e_0$, there holds that $T^*_\e > T$. Otherwise, we have $\tilde{T}_1 = T_{\e}^{*}$ and thus $\|\dot{U}\|_{L^\infty(0, T^*_\e ; H^{\tilde{s}})} \leq M(T),$ which contradicts to the definition of $T^{*}_{\e}$. 

 In addition, for all $t < \min\{T^*, T^*_\e\}$, there holds $\|\dot{U}(t, \cdot)\|_{H^{\tilde{s}}} \leq C_{\phi, \psi}(t)$. We thus complete the proof by  noting that $U - U_a = \e^{K+1} \dot{U}$. 
%\begin{remark} \label{cont_arg}
%    The proof of  $ \tilde{T}_1 = \min\{T, T^*_\e\}$ is called continuation argument. In the sequel, we will use it again. For brevity, we will omit the details.
%\end{remark}

\subsection{Proof of Theorem \ref{d_ex_h}}  \label {p2.4}%%这里也许要引用
For the defocusing case, we can construct a global-in-time WKB solution $U_a$. We will prove that $U_a$ is stable over long time. Consider the case with high regularity and let $K$ be a positive and even integer. We use \eqref{ob01} and \eqref{formUdot} to derive 
\begin{align} \label{estUdotd}
\begin{split}
    \|\dot{U} (t, \cdot)\| _ {H^{\tilde{s}}} 
    \le&  \|r_\e\| _ {H^{\tilde{s}}} + \int_0^t \|R_\e(t', \cdot)\|_{H^{\tilde{s}}} \, \D{t'}  
     + \int_0^t \la \tilde{M}  (\tilde{s} - 1) \big(( 3\| u_a \|_{H^{\tilde{s}}} \|u_a\| _ {L^\infty} \\
     &+3 \e^{K+1} \|u_a\|_{H^{\tilde{s}}} \| \dot{u} \| _ {H^{\tilde{s}}} + \e^{2(K + 1)} \| \dot{u} \| ^ 2 _ {H^{\tilde{s}}} ) \|\dot{u}\|_{H^{\tilde{s}}} \big) (t')\, \D{t'}, \quad \forall\, t \in (0, \infty),
\end{split}
\end{align}
where the prefactor $\tilde{M}(\tilde{s} - 1)$ is given by \eqref{estimate_def21}. 

By Propositions \ref{regd}, \ref{global_wp}, Corollary \ref{existence} and the classical estimates \eqref{classical}, we know that for all $   t \in (0, \infty)$, $u_a$ enjoys the two estimates below:
\begin{align} 
\begin{split}\label{est_uaH}
   & \|u_{a}(t, \cdot)\|_{H^{\tilde{s}}}= 
     \|(u_{0} + \e^{2} u_{2} + \cdots + \e^{K} u_{K} + \e^{K + 2} u_{K + 2})(t, \cdot)\|_{H^{\tilde{s}}} \\ 
     &\leq 2 C_{1}(\phi, \psi, \tilde{s})+ C_{\phi, \psi}\left(\e^{2}(1 + t)^{\hat{N}_{\phi, \psi}} + \e^{4}(1 + t)^{2 \hat{N}_{\phi, \psi} + 1} + \cdots  + \e^{K+2}(1 + t)^{\frac{K+2}{2} \hat{N}_{\phi, \psi} + \frac{K }{2}}\right),
\end{split} \\
\begin{split}\label{estuaL}
    &\| u _ {a}(t, \cdot)\| _ {L^{\infty}} = 
     \| (u_{0} + \e^{2} u_{2} + \cdots + \e^{K} u_{K} + \e^{K + 2} u_{K + 2}) (t, \cdot) \| _ {L^{\infty}} \\ 
    &\leq  2 C_{2}(\phi, \psi)(1 + t)^{-1} + C_{\phi, \psi} \left(\e^{2}(1 + t)^{\hat{N}_{\phi, \psi}} +  \cdots + \e^{K+2}(1 + t)^{\frac{K+2}{2} \hat{N}_{\phi, \psi} + \frac{K }{2}}\right).
\end{split}
\end{align}
Here $\hat{N}_{\phi, \psi}$ is defined in \eqref{definehatN}.
Let $0 < T_0 \leq 1$ be determined later. 

For brevity, we denote 
\begin{align*}
    &\e^{2}(1 + t)^{\hat{N}_{\phi, \psi}} + \e^{4}(1 + t)^{2 \hat{N}_{\phi, \psi} + 1} + \cdots  + \e^{K+2}(1 + t)^{\frac{K+2}{2} \hat{N}_{\phi, \psi} + \frac{K }{2}}=A, \\
    &\e^{2}(1 + t)^{\hat{N}_{\phi, \psi}} +  \cdots + \e^{K+2}(1 + t)^{\frac{K+2}{2} \hat{N}_{\phi, \psi} + \frac{K }{2}}=B.
\end{align*}
Substituting \eqref{est_uaH} and \eqref{estuaL} into \eqref{estUdotd} gives that
\begin{align} \label{U.est}
\begin{split}
        \|\dot{U}(t, \cdot)\|_{H^{\tilde{s}}} 
    &\leq C_{\phi, \psi} (1 + t)^{K} \left( 1 + (1 + t)^{\frac{K}{2} (\hat{N}_{\phi,\psi}-1)} \right) + \int_0^{\rm T} \left( C_{\phi, \psi} (A+B+AB) \right. \\
    & \left. +\tilde{N}_{\phi,\psi} (1 + t')^{-1} +  C_{\phi, \psi} \e^{K+1} (1+A)\|\dot{u}\|_{H^{\tilde{s}}} + C \e^{2(K+1)}  \|\dot{u}\|^2_{H^{\tilde{s}}} \right) \|\dot{u}\|_{H^{\tilde{s}}}(t') \, \D{t'}.
\end{split}
\end{align}
Here we define 
\begin{align} \label{define_tildeN}
    \tilde{N}_{\phi, \psi}:=12 \la \tilde{M}(s - 2 K - 5 ) C_{1}(\phi, \psi, s - 2 K - 4) C_{2}(\phi, \psi).
\end{align}
By imposing $N_{\phi,\psi} = \max\{\hat{N}_{\phi, \psi}, \tilde{N}_{\phi,\psi}\}$, we can substitute $\hat{N}$ and $\tilde{N}$ with $N$ in the formula above.
Moreover, we can naturally impose a time limit $T_\e$ as $T_\e = T_0 \e^{-\alpha}$ with  $\alpha = \frac{2}{N_{\phi,\psi}+1}$, $0 < T_0 \le 1$ to ensure $A= O(\e^\alpha)$ and $\, B=O(\e^\alpha)$. Hence, we can simplify the estimate \eqref{U.est}
\begin{align} \label{estUdoth}
\begin{split}
    \|\dot{U}(t, \cdot)\|_{H^{\tilde{s}}} 
    &\leq C_{\phi, \psi} (1 + t)^{K} \left( 1 + (1 + t)^{\frac{K}{2} (N_{\phi,\psi}-1)} \right) + \int_0^{\rm T} \left( N_{\phi,\psi} (1 + t')^{-1} \right. \\
    & \left. + C_{\phi, \psi} \e^{\alpha} + C_{\phi, \psi} \e^{K+1} \|\dot{u}\|_{H^{\tilde{s}}} + C \e^{2(K+1)} \|\dot{u}\|^2_{H^{\tilde{s}}} \right) \|\dot{u}\|_{H^{\tilde{s}}}(t') \, \D{t'}, \quad \forall\,t \in [0, T_\e]. 
\end{split}
\end{align}
Define 
$$\tilde{T}_3 := \sup \left\{ t \leq T_\e : \|\dot{U}\|_{L^\infty(0, t; H^{\tilde{s}})} \leq e^{3} C_{\phi, \psi} 2^{K} \e^{-K} \right\}.$$
By \eqref{definehatN} and \eqref{define_tildeN}, we directly have $N_{\phi,\psi} \geq 1$ and $\alpha \leq 1$. Then we can deduce  that for all $t \leq \tilde{T}_3$, the estimate \eqref{estUdoth} can be written as
\begin{align*}
    \|\dot{U}(t, \cdot) \|_{H^{\tilde{s}}} 
    \leq &2 C_{\phi, \psi}(1+t)^{\frac{K}{2} (N_{\phi, \psi} + 1)} + \int_{0}^{t}\left(N_{\phi, \psi} ( 1 + t' )^{-1} + \e^{\alpha} \hat{C}_{\phi, \psi} \right) \| \dot{u} (t', \cdot ) \|_{H^{\tilde{s}}} \D{t'}, %这里逗号不能忘记
\end{align*}
where
\begin{align*}
    \hat{C}_{\phi, \psi} := C_{\phi, \psi} + C_{\phi, \psi}^2 \e^{1 - \alpha} e^{3} 2^{K} +  C_{\phi, \psi}^2 \e^{2 - \alpha} e^{6} 2^{2K}.
\end{align*}
Using Gronwall's inequality gives
\begin{align*}
    &\frac{1}{2 C_{\phi, \psi}}\|\dot{U}(t, \cdot)\|_{H^{\tilde{s}}} \\ 
    &\le (1 + t)^{\frac{K}{2} ( N_{\phi, \psi}+1)}+\int_{0}^{t} ( 1 + t' ) ^ {\frac{K}{2} (N_{\phi, \psi} + 1)} ( \frac{N_{\phi, \psi}}{1 + t'} + \e^{\alpha} \hat{C}_{\phi, \psi}) e^{\int_{t'}^{t}\left(\frac{N_{\phi, \psi}}{1 + t^{''}} + \e^{\alpha} \hat{C}_{\phi, \psi}\right) \D{t''}} \D{t'} \\ 
    &\le (1 + t)^{\frac{K}{2}(N_{\phi, \psi} + 1)}  +\int_{0}^{t}( 1 + t' )^{\frac{K}{2}(N_{\phi, \psi} + 1)}\left(\frac{N_{\phi, \psi}}{1 + t'} + \e^{\alpha} \hat{C}_{\phi, \psi}\right)\left(\frac{1 + t}{1 + t'}\right)^{N_{\phi, \psi}} e^{T_{0} \hat{C}_{\phi, \psi}} \D{t'} \\ 
    &\le  (1+t)^{\frac{K}{2}(N_{\phi, \psi}+1)}+(1+T_{0} \hat{C}_{\phi, \psi}) e^{T_{0} \hat{C}_{\phi, \psi}}(1+t)^{\frac{K}{2}(N_{\phi, \psi}+1)}, \quad \forall\, t \leq \tilde{T}_3.
\end{align*}
For all $t \leq T_\e$, there holds 
\begin{align*}
    C_{\phi, \psi}(1 + t)^{\frac{K}{2} N_{\phi,\psi} + \frac{K}{2}}  \leq 2 C_{\phi, \psi} 2^{K} \e^{-K}.
\end{align*}
Choose $T_0$ such that
\begin{align} \label{T_0limitb}
    T_0 \hat{C}_{\phi, \psi} = T_0 ( C_{\phi, \psi} + C _{\phi, \psi}^2 \e^{1 - \alpha} e^{3} 2^{K} + C_ {\phi, \psi}^2 \e^{2 - \alpha} e^{6}  2^{2K} ) \leq \frac{1}{2}.
\end{align}

Hence,
\begin{align*}
    \|\dot{U}(t, \cdot)\|_{H^{\bar{s}}} \leq 5 C_{\phi, \psi}(1 + t)^{\frac{K}{2}(N_{\phi, \psi} + 1)}  e^{\frac{1}{2}} \leq C_{\phi, \psi} e^{\frac{5}{2}} \frac{2^{K}}{\e^{K}}, \quad \forall\, t \leq \tilde{T}_3.
\end{align*}
Using continuation argument gives that $\tilde{T}_3 = T_\e$.

Therefore, for all $T_0$ chosen in \eqref{T_0limitb} and  $t \leq \frac{T_0}{\e^\alpha}$, there holds
\begin{align*}
    \|\dot{U}(t, \cdot)\|_{H^{\tilde{s}}} \leq 5 e^{\frac{1}{2}} C_{\phi, \psi} (1 + t)^{\frac{K}{2}(N_{\phi, \psi} + 1)}.
\end{align*}

We complete the proof.

\subsection{Proof of Theorem \ref{d_ex_l}}

The proof of Theorem \ref{d_ex_l} is similar to the arguments in Section \ref{p2.4}. We consider a WKB solution taking the form in \eqref{WKB_inital} with $K = 0$. Hence, by \eqref{U0form}, \eqref{U1form}, \eqref{U2form}, we impose $g_1 = g_2 = 0$ to derive
\begin{align*}
    U_a = U_0 + \e U_1 + \e^2 U_2. 
\end{align*}
Here, $U_0$, $U_1$ and $U_2$ are respectively given by \eqref{U0form}, \eqref{U1form} and \eqref{U2form}, where  $g_0$ is the solution to the defocusing cubic Schr\"{o}dinger equation \eqref{ls_g_0} and satisfies the estimates in Proposition \ref{global_wp}.

The estimate of $\dot{U}$ follows from \eqref{estUdotd} with $K=0$ in $\tilde{s}$, and we can apply  Theorem \ref{d_remainder_estimate} to get the estimates of $R_\e$ and $r_\e$.

Moreover, with \eqref{U0form}, \eqref{U1form} and \eqref{U2form}, it is clear that
\begin{align*}
    \|u_a(t, \cdot)\|_{H^{s - 4}} = \|(u_0 + \e^2 u_2)(t, \cdot)\|_{H^{s - 4}} \leq 2C_1(\phi, \psi, s - 4) + C_{\phi, \psi} \e^2, \\
    \|u_a(t, \cdot)\|_{L^\infty} = \|(u_0 + \e^2 u_2)(t, \cdot)\|_{L^\infty} \leq 2C_2(\phi, \psi) (1 + t)^{-1} + C_{\phi, \psi} \e^2.
\end{align*}
Then
\begin{align*}
    \|\dot{U}(t, \cdot)\|_{H^{s-4}} \leq &C_{\phi, \psi}(1 + t) + \int_{0}^{t}\left(\tilde{N}_{\phi, \psi}( 1 + t' )^{-1} + C_{\phi, \psi} \e^{2} + C_{\phi, \psi}\|\dot{u}\|_{H^{s-4}}\right. \\ 
    &\left. + C \e^{2} \| \dot{u} \| _ {H^{s - 4}} ^{2}\right)\|\dot{u}\|_{H^{s - 4}}(t') \D{t'}.
\end{align*}

The remaining part follows similarly to the proof of Theorem \ref{d_ex_h}. 

\subsection{Error estimates in the non-relativistic regime} \label{error}
 %% 这里几句话 单独一章 太少了, 合并上面去应该要. 好的，格式改完我并上去

Now we show Theorems \ref{defo_1}, \ref{defo_2}, \ref{defo_3} and \ref{fo_1}. In fact, they are corollaries of Theorems \ref{f_ex}, \ref{d_ex_h} and \ref{d_ex_l} . 

Theorem \ref{defo_3} is a corollary of Theorem \ref{d_ex_h} . Theorem \ref{defo_2} is a corollary of Theorem \ref{d_ex_l}. Theorem \ref{fo_1} is a corollary of Theorem \ref{f_ex}. The proofs are straightforward, so we will omit them.

Theorem \ref{defo_1} is a special case of Theorem \ref{defo_3} with $K = 2$ and $s > 9 $. By \eqref{defo_3_estimate1} in Theorem \ref{defo_3}, there holds
\begin{align*}
    \|(u - u_0 - \e^2 u_2 - \e^4 u_4)(t, \cdot)\|_{H^{s - 8}(\mathbb{R}^2)} \leq C_{\phi, \psi} (1 + t)^{N_{\phi, \psi} + 1}  \e^3, \, t \leq T_0 \e^{-\alpha}, 
\end{align*}
where $u_0, \, u_2,\, u_4$ satisfy \eqref{form_Ua1} and \eqref{defo_3_estimate2}. Thus we have
\begin{align*}
    \|(u - u_0)(t, \cdot)\|_{H^{s - 8}} 
    \le& \e^2 \|u_2(t, \cdot)\|_{H^{s - 8}} + \e^4 \|u_4(t, \cdot)\|_{H^{s - 8}} + C_{\phi, \psi} (1 + t)^{N_{\phi, \psi} + 1} \e^3 \\
   \leq & C_{\phi, \psi} (1 + t)^{N_{\phi, \psi}} \e^2 \left( 1 + \e^2 + (1 + t) \e \right) \\
    \le& C_{\phi, \psi}(1 + t)^{N_{\phi, \psi}}  \e^2, \quad \forall \, t \leq T_0 \e^{-\a}.
\end{align*}
This is exactly Theorem \ref{defo_1}.

\end{document}